\theoremstyle{definition} 
\newtheorem{theorem}{Theorem}[section]
\newtheorem{proposition}[theorem]{Proposition}
\newtheorem{lemma}[theorem]{Lemma}
\newtheorem{cor}[theorem]{Corollary}
\newtheorem{remark}[theorem]{Remark}
\newtheorem{example}[theorem]{Example}
\newtheorem{definition}[theorem]{Definition}
\newcommand{\A}{\mathcal{A}}
\newcommand{\B}{\mathcal{B}}
\newcommand{\E}{\mathcal{E}}
\newcommand{\cS}{\mathcal{S}}
\newcommand{\mN}{\mathbb{N}}
\newcommand{\N}{\mathbb{N}}
\def\Ext{\mbox{Ext}}
\def\Ex{\mbox{Ex}}
\def\irr{\mbox{irr}\,}
\def\rad{\mbox{rad}\,}
\def\rep{\mbox{rep}\,}
\def\add{\mbox{add}\,}
\def\mod{\mbox{mod}\,}
\def\dim{\mbox{dim}\,}
\def\coker{\mbox{coker}\,}
\newcommand{\imono}[1]{ \;\xymatrix{  \ar@{>->}^{#1}[r] &  \\} }
\newcommand{\iepi}[1]{ \;\xymatrix{  \ar@{->>}^{#1}[r] &  \\} }
\newcommand{\mono}{ \;\xymatrix{  \ar@{>->}[r] &  \\} }
\newcommand{\epi}{ \xymatrix{   \ar@{->>}[r] &  \\} }
\title{Reduction of exact structures}
\author{Thomas Br\"ustle, Souheila Hassoun, Denis Langford, Sunny Roy}
\begin{document}
\date{}
\maketitle

\abstract{ 

Examples of exact categories in representation theory are given by the category of $\Delta-$filtered modules over quasi-hereditary algebras, but also by various categories related to matrix problems, such as poset representations or representations of bocses.
Motivated by the matrix problem background, we study in this article the reduction of exact structures, and consider the poset $(Ex(\A), \subset)$ of all exact structures on a fixed additive category $\A$. This poset turns out to be a bounded complete lattice, and under suitable conditions results of Enomoto's imply that  it is boolean. 

We initiate in this article a detailed study of exact structures $\E$ by generalizing notions from abelian categories such as the length of an object relative to $\E$ and the quiver of an exact category $(\A,\E)$. We investigate the Gabriel-Roiter measure for $(\A,\E)$, and further study how these notions change when the exact structure varies.
}
\section{Introduction}

There are several notions of exact categories given by Barr, Buchsbaum or Quillen. We study in this article Quillen's \cite{Qu} notion of exact category, which is formulated in the context of an additive category $\A$. One specifies a distinguished class $\E$ of short exact sequences which forms an exact structure on $\A$, that is, $\E$ consists of kernel-cokernel pairs subject to some closure requirements, see section \ref{basic section}.
The pair 
$(\A,\E)$ is called an exact category  (we also refer to  \cite{GR} and \cite{Bu} for the system of axioms we are using).

It is well known that on every additive category $\mathcal{A}$ the class of all split exact sequences provides the smallest exact structure,  see \cite[Lemma 2.7]{Bu}.
However, for the maximal exact structure there is quite some recent literature, such as \cite{SW}, \cite{Cr}, \cite{Ru11} and \cite{Ru15} which shows that every additive category admits a unique maximal exact structure $\E_{max}$. We recall the details in section \ref{basic section}.

Quillen defined the abstract notion of exact structure somewhat as a by-product in his fundamental paper on higher algebraic K-theory. 
It allows to perform homological algebra relative to the exact structure $\E$, and to study the (relative) Grothendieck group and the derived category of $(\A,\E)$, see \cite{Bu}.
Relative homological algebra (like relative projective objects) has also been studied intensely from a different point of view, starting with a paper by Auslander and Solberg \cite{AS} where they look at subbifunctors of the Ext-functor. 
It has been shown in \cite{DRSS} that these two concepts coincide, that is, the additive closed subbifunctors correspond to exact structures. 
Recently, exact structures have become focus of work by several authors, like \cite{Enomoto} who classifies exact structures on a given Krull-Schmidt category of finite type, using Auslander algebra, or \cite{INP} where the more general concept of extriangulated structures is studied.
\medskip

While every  exact category $(\mathcal{A}, \mathcal{E})$ can be embedded into a module category, notions like length or simple object cannot be borrowed from such an embedding. 
The first goal of this paper is to give an intrinsic definition relative to the class of morphisms in $\mathcal{E}$, thus, in section \ref{section 3}, we call an object $\mathcal{E}-$simple if it does not admit proper monomorphisms that belong to the class $\mathcal{E}$. 
And we say that $X$ is an $\E-$suboject of $Y$, or $X \subset_\E Y$ if there exists a monomorphism in $\E$ from $X$ to $Y$. 
This change of definition requires to work out a number of notions and results that are granted in abelian categories, such as the notion of simple objects, artinian and finite objects or the length of an object.
It turns out that in general not all the desired properties can be guaranteed. 
We also define, in section \ref{section 3}, the notion of the quiver of an exact category $Q(\A, \E)$.

The motivation for studying reductions of exact structures stems from the matrix reduction technique. The method of matrix reduction has been applied successfully by the Kiev school to solve various important problems in representation theory, like the Brauer-Thrall conjectures, or to show the tame-wild dichotomy. 
While the basic technique is elementary, the formalism of matrix reductions is somewhat complicated. Various models have been proposed to formalise matrix reductions: poset representations or bimodule problems cover only some cases. For the general case, one needs to study bocs representations, as introduced by Roiter in \cite{Roi2}, or iterated quotients of bimodule problems as in \cite{Bru}. 

\medskip

No matter which formalism one chooses, the iterated application of reductions leads to more and more complicated categories. 
We propose a different approach in this paper, that is: {\em Keep the objects of the original category, but change its exact structure.} We illustrate in section  \ref{section:matrix reduction} with an example that the elementary technique of matrix reduction can be viewed as a reduction of exact structures,
where we define the {\em reduction} of an exact category $(\A, \E)$
as the choice of an exact structure $\E' \subseteq \E$. 
We observe that when $\E'$ is the smallest possible exact structure, the split exact structure, then the exact category $(\A, \E')$ is in some sense semisimple: Every indecomposable is simple. In general, $(\A, \E')$ will be "simpler" than $(\A, \E)$ in the sense that $(\A, \E')$ will have more simple objects.

We like to mention that the category of poset or bocs representations admits a natural exact structure, but these cases are rather special: the exact categories stemming from bocses always admit sufficiently many projectives, and are hereditary (the higher Ext groups vanish). 
The reduction of exact structures studied in this paper is therefore more general.
\color{black}
\medskip

A second goal of this paper is to study for a fixed additive category $\A$ the poset $(Ex(\A), \subset)$ of exact structures ordered by containment. It turns out that this poset is a complete bounded lattice, see section \ref{lattice}.

Another goal is to generalise the notion of Gabriel-Roiter measure to the realm of exact categories. To start, we first define in section \ref{section 6} the length of an object in an exact category $(\A,\E):$ 
the $\E-$length $l_{\E}(X)$ of an object $X$ is the maximal length of a chain of proper  $\E-$subojects of $X$.
We use the notion of $\E-$length to show the following result:

\begin{proposition}(see \ref{poset}):
Let $(\A, \E)$ be an essentially small exact category where every object has finite $\E-$length. Then
the relation $\subset_\E $ induces a partial order on $Obj\A$.
\end{proposition}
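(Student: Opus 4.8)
The plan is to check the three defining properties of a partial order for the relation $\subset_\E$, read on the \emph{set of isomorphism classes} of objects of $\A$ --- it is here, and only here, that essential smallness is used, merely to guarantee that this collection of isomorphism classes forms a set. Reflexivity and transitivity hold in any exact category and require no hypothesis. For reflexivity, the split short exact sequence $X \xrightarrow{\ \mathrm{id}\ } X \to 0$ belongs to $\E$, so $\mathrm{id}_X$ is an $\E$-monomorphism and $X \subset_\E X$. For transitivity, if $X \rightarrowtail Y$ and $Y \rightarrowtail Z$ are $\E$-monomorphisms then so is their composite, by the axiom that inflations are closed under composition, whence $X \subset_\E Y$ and $Y \subset_\E Z$ give $X \subset_\E Z$.

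The content is antisymmetry, and this is where the finite $\E$-length hypothesis enters. First I would isolate the monotonicity statement: \emph{if there is a proper $\E$-monomorphism $X \rightarrowtail Y$ (one that is not an isomorphism), then $l_\E(X) < l_\E(Y)$.} To see this, choose a chain of proper $\E$-subobjects $X_0 \subsetneq_\E X_1 \subsetneq_\E \cdots \subsetneq_\E X_n = X$ realizing $l_\E(X) = n$, which exists precisely because this length is finite; composing each inclusion $X_i \rightarrowtail X$ with the given $X \rightarrowtail Y$ exhibits $X_i$ as a proper $\E$-subobject of $Y$, and adjoining $X$ itself we obtain a strictly longer chain of proper $\E$-subobjects of $Y$, so $l_\E(Y) > l_\E(X)$. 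Granting this, suppose $X \subset_\E Y$ and $Y \subset_\E X$; if $X \not\cong Y$, then neither of the two $\E$-monomorphisms realizing these relations can be an isomorphism, so both are proper, and monotonicity yields $l_\E(X) < l_\E(Y)$ and $l_\E(Y) < l_\E(X)$, a contradiction. Hence $X \cong Y$, which is antisymmetry at the level of isomorphism classes.

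The only point needing care --- and, to the extent there is one, the main obstacle --- is the bookkeeping hidden in the word ``proper'' when prolonging the chain: one must know that composing a proper $\E$-monomorphism $f\colon X \rightarrowtail Y$ with a further $\E$-monomorphism $g\colon Y \rightarrowtail Z$ again yields a proper map. This is immediate from the axioms: were the composite $gf$ an isomorphism, then $g$ would be a split epimorphism, and a split epimorphism that is also a monomorphism (as every inflation is) must be an isomorphism; then $f = g^{-1}(gf)$ would be an isomorphism as well, contradicting properness of $f$. With this observation, the prolonged families really are strictly increasing chains of proper $\E$-subobjects, the monotonicity statement follows, and the finiteness assumption then closes the argument.
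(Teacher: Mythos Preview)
Your proof is correct and follows the same approach as the paper: reflexivity from axiom (E0), transitivity from (E1), and antisymmetry via the strict-monotonicity statement (which is exactly the paper's Lemma \ref{lengthXY}, proved there in the same way by appending $X \rightarrowtail Y$ to a maximal chain for $X$). Your final paragraph on composites of proper monics is more care than strictly needed --- since the definition of $l_\E$ only requires each \emph{consecutive} arrow in the chain to be a proper admissible monic, the extended chain $0 = X_0 \rightarrowtail \cdots \rightarrowtail X_n = X \rightarrowtail Y$ already witnesses $l_\E(Y) \ge n+1$ without invoking any composites --- but the observation is correct and does no harm.
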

This result allows to show that the length function $l_{\mathcal{E}}$ of a finite essentially small exact category $(\A, \E)$ is a measure for the poset $Obj\mathcal{A}$ in the sense of Gabriel \cite{Gab}.
We further show that \emph{most} of the work of Krause \cite{Kr11} on the Gabriel-Roiter measure for abelian length categories can be generalized to the context of exact categories: 
For the partially ordered set $(ind\A,{\subset}_{\E})$ equipped with the length function $l_{\E}$, we define the Gabriel-Roiter measure as a morphism of partially ordered sets which refines the length function $l_{\E}$, see Theorem \ref{GR measure}:
\begin{theorem}
There exists a Gabriel-Roiter measure  for $ind(\A,\E)$.
\end{theorem}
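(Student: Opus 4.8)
The plan is to carry out Krause's recursive construction of the Gabriel--Roiter measure \cite{Kr11} in the abstract setting of the poset $(ind\A,\subset_\E)$ equipped with the length function $l_\E$. By Proposition~\ref{poset} the relation $\subset_\E$ is a partial order on $Obj\A$, hence on $ind\A$; by hypothesis $l_\E$ takes values in $\N$, and it is strictly monotone for $\subset_\E$, since a proper $\E$-monomorphism $X\to Y$ extends any chain of proper $\E$-subobjects of $X$ by the step $X\subsetneq_\E Y$. Let $\mathcal{T}$ be the set of finite subsets of $\N$ with the Gabriel--Roiter total order (in which a set is larger than each of its proper subsets, and which agrees with the order of $\N$ on singletons). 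One defines $\mu\colon ind\A\to\mathcal{T}$, constant on isomorphism classes, by induction on $l_\E(X)$: if $X$ is $\E$-simple put $\mu(X)=\{l_\E(X)\}$, and otherwise
\[
\mu(X)=\{l_\E(X)\}\ \cup\ \max\bigl\{\,\mu(X')\ :\ X'\subsetneq_\E X,\ X'\in ind\A\,\bigr\}.
\]

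Two points make this definition legitimate. First, the recursion is well-founded only if every non-$\E$-simple indecomposable $X$ admits \emph{some} proper indecomposable $\E$-subobject: since $X$ is not $\E$-simple there is a nonzero proper $\E$-subobject $X'\subsetneq_\E X$, and writing $X'=\bigoplus_i X'_i$ as a sum of indecomposables, each composite $X'_i\hookrightarrow X'\to X$ of the split inclusion with the given $\E$-monomorphism is again an $\E$-monomorphism, while $l_\E(X'_i)\le l_\E(X')<l_\E(X)$ forces $X'_i\not\cong X$; thus the $X'_i$ are proper indecomposable $\E$-subobjects of $X$. Second, the maximum in the displayed formula must exist even though (unlike in an abelian length category) a single object may have infinitely many $\E$-subobjects; this is clear because every value $\mu(X')$ is a subset of $\{1,\dots,l_\E(X)-1\}$, so only finitely many values occur. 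The same observation, together with strict monotonicity of $l_\E$, gives $\max\mu(X)=l_\E(X)$ for all $X$ by induction.

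It remains to verify that $\mu$ is a Gabriel--Roiter measure. It is a morphism of posets, indeed strictly monotone: if $X\subsetneq_\E Y$ with $X,Y$ indecomposable, then $\mu(X)$ is one of the sets over which the maximum defining $\mu(Y)$ is formed, say $\mu(X)\le\mu(X_0)$; since $l_\E(Y)>l_\E(X_0)=\max\mu(X_0)$, adjoining $l_\E(Y)$ to $\mu(X_0)$ strictly enlarges it in $\mathcal{T}$, whence $\mu(Y)=\{l_\E(Y)\}\cup\mu(X_0)>\mu(X_0)\ge\mu(X)$. It refines $l_\E$ in the sense that $l_\E=\max\circ\,\mu$, so in particular the fibres of $\mu$ refine those of $l_\E$. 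Any further axiom one builds into the definition of a Gabriel--Roiter measure --- in particular the successor property relating $\mu(Y)$ to the $\E$-subobjects of $Y$ --- is then checked exactly as in \cite{Kr11}, the only categorical facts needed there being that direct summands are $\E$-subobjects via split monomorphisms and that $l_\E$ is strictly monotone along chains of $\E$-subobjects, both established above.

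The main obstacle is the lack of a Jordan--H\"older theorem for $(\A,\E)$: an $\E$-subobject of an indecomposable object need not be indecomposable, which is precisely what prevents the well-foundedness of the recursion from being immediate and forces the decomposition argument above. A secondary point, also flagged, is that the set of $\E$-subobjects of a fixed object is not assumed finite, so the maxima occurring in the construction must be justified through the finiteness of the range of $\mu$ on proper $\E$-subobjects rather than through finiteness of the indexing family.
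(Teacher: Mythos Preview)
Your recursive construction is equivalent to the paper's, which defines $\mu_\E(X)$ as the maximum over $\E$-filtrations by indecomposables of the vector of lengths (Definition~\ref{construction}); the paper then verifies (GR1)--(GR3) directly via a submeasure lemma (Lemma~\ref{submeasure}) rather than deferring to \cite{Kr11}, but both routes rest on the same purely order-theoretic facts about $(ind\A,\subset_\E,l_\E)$. One small point: your argument that a non-$\E$-simple indecomposable has an indecomposable proper $\E$-subobject invokes a decomposition $X'=\bigoplus_i X'_i$ into indecomposables, which is not among the stated hypotheses; it does follow from finite $\E$-length by induction using superadditivity (Theorem~\ref{lXYZ}), but it is cleaner to note that the $\E$-Artinian property already yields an $\E$-simple, hence indecomposable, $\E$-subobject directly.
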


\medskip

Finally, starting from the maximal exact structure on $\mathcal{A}$ one can choose a sequence of reductions to arrive at the minimal, the split exact structure. In \ref{section under reduction} we study these chains of exact structures in the lattice $Ex(\mathcal{A})$ and how basic notions, like the extended notion of length of an object, change under these reductions:

\begin{proposition} If $\E$ and $\E'$ are exact structures on $\A$, such that $\E' \subseteq \E$, then $l_{\E'}(X) \leq l_{\E}(X)$ for all objects $X$ of $\A$.
\end{proposition}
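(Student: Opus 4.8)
The plan is to reduce the statement to a single observation: the inclusion $\E' \subseteq \E$ of exact structures induces an inclusion of the associated classes of monomorphisms, hence a refinement of the relation $\subset_\E$. First I would record that if $f\colon X \to Y$ is a monomorphism in $\E'$, then the kernel--cokernel pair it belongs to lies in $\E'$, hence in $\E$, so $f$ is a monomorphism in $\E$; and $f$ is proper (i.e.\ not an isomorphism) regardless of which of the two classes we view it in, since this property does not refer to the exact structure. Consequently, if $X \subset_{\E'} Y$ is witnessed by a proper monomorphism in $\E'$, then $X \subset_{\E} Y$ is witnessed by a proper monomorphism in $\E$.

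Next I would feed this into the definition of the $\E$-length. Recall that $l_{\E'}(X)$ is the supremum of the lengths $n$ of chains $X_0 \subsetneq_{\E'} X_1 \subsetneq_{\E'} \cdots \subsetneq_{\E'} X_n$ of proper $\E'$-subobjects of $X$. By the previous step each such chain is also a chain of proper $\E$-subobjects of $X$: every step $X_i \subsetneq_{\E'} X_{i+1}$ is witnessed by a proper monomorphism in $\E'$, which is a proper monomorphism in $\E$, and if the definition also requires the composite $X_i \hookrightarrow X$ to be a monomorphism in $\E$, one invokes closure of monomorphisms under composition, an axiom valid for every exact structure and in particular for both $\E'$ and $\E$. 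Hence any chain of length $n$ admissible for $\E'$ is admissible for $\E$, so $n \le l_{\E}(X)$; taking the supremum over all such chains yields $l_{\E'}(X) \le l_{\E}(X)$.

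This argument is insensitive to finiteness: if $l_{\E'}(X) = \infty$, then there are arbitrarily long chains of proper $\E'$-subobjects of $X$, hence arbitrarily long chains of proper $\E$-subobjects, so $l_{\E}(X) = \infty$ as well. There is essentially no hard step here; the only point needing a little care is to check that the notion of ``chain of proper $\E$-subobjects'' used in the definition of $l_{\E}$ transfers verbatim from $\E'$ to $\E$, which is exactly what the first paragraph ensures. The inequality then holds for all objects $X$ of $\A$ with no additional hypotheses, and under the assumptions of Proposition \ref{poset} it is moreover compatible with the partial orders $\subset_{\E}$ and $\subset_{\E'}$.
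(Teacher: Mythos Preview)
Your proof is correct and follows essentially the same approach as the paper: both argue that any chain of proper $\E'$-admissible monics is automatically a chain of proper $\E$-admissible monics (since $\E' \subseteq \E$), so the supremum defining $l_{\E'}(X)$ is bounded above by $l_{\E}(X)$. Your version is slightly more careful in explicitly handling the case $l_{\E'}(X) = \infty$, whereas the paper's proof tacitly assumes a maximal chain of finite length $n$ exists.
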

Thus, the length of objects is reduced, until the reduction reaches the minimum $\E_{min}$, where every indecomposable has length one, that is, simple. 
\bigskip

\paragraph{Acknowledgements.}
The authors would like to thank Charles Paquette for his thoughtful comments on the PhD thesis of the second author. The authors would also like to thank  Amit Shah, Min Huang, Emily Barnard, Bill Crawley-Boevey, Haruhisa Enomoto, Al Garver and Rose-Line Baillargeon for helpful discussions.\\ The authors were supported by Bishop's University, Université de Sherbrooke and NSERC of Canada.

\section{Definitions and basic properties}\label{basic section}

\subsection{Quillen exact categories}
We recall from \cite{GR,Bu} the definition of exact categories  in the sense of Quillen \cite{Qu} and give some examples.
\begin{definition}\label{Quillen def}Let $\mathcal{A}$ be an additive category. A kernel-cokernel pair $(i, d)$ in $\mathcal{A}$ is a pair of composable morphims such that $i$ is kernel of $d$ and $d$ is cokernel of $i$.
If a class $\mathcal{E}$ of kernel-cokernel pairs on $\mathcal{A}$ is fixed, an {\em admissible monic} is a morphism $i$ for which there exist a morphism $d$ such that $(i,d) \in \mathcal{E}$. An {\em admissible epic} is defined dually. Note that admissible monics and admissible epics are referred to as inflation and deflation in \cite{GR}, respectively. 
We depict an admissible monic by  $\mono$
and an admissible epic by $\epi$.
An {\em exact structure} $\mathcal{E}$ on $\A$ is a class of kernel-cokernel pairs $(i, d)$ in $\A$ which is closed under isomorphisms and satisfies the following axioms:
\medskip

\begin{tabular}{ll}
(E0) & For all objects  $A$ in $\mathcal{A}$ the identity $1_A$ is an admissible monic
\medskip\\
{(E0)$^{op}$} & For all objects  $A$ in $\mathcal{A}$ the identity $1_A$ is an admissible epic 
\medskip\\
(E1) & The class of admissible monics is closed under composition
\medskip\\
(E1)$^{op}$ & The class of admissible epics is closed under composition
\medskip\\
(E2) & \begin{minipage}[t]{10.5cm}
 The push-out of an admissible monic $i: A \mono B$ along an arbitrary morphism $t: A \to C$ exists and yields an admissible monic $s_C$:
$$\xymatrix{
A \; \ar[d]_{t} 
\ar@{ >->}[r]^{i}  \ar@{}[dr]|{\text{PO}} 
& B\ar[d]^{s_B}\\
C \; \ar@{>->}[r]_{s_C} & S}$$
\end{minipage}\medskip
\\
{(E2)}$^{op}$ &  \begin{minipage}[t]{10.5cm}
The pull-back of an admissible epic $h$ along an arbitrary morphism $t$ exists and yields an admissible epic $p_B$
$$\xymatrix{
P \; \ar[d]_{P_A} 
\ar@{ ->>}[r]^{p_B} \ar@{}[dr]|{\text{PB}}  & B\ar[d]^{t}\\
A \; \ar@{->>}[r]_{h} & C}$$
\end{minipage}
\end{tabular}

An {\em exact category} is a pair $(\mathcal{A}, \mathcal{E})$ consisting of an additive category $\mathcal{A}$ and an exact structure $\mathcal{E}$ on $\mathcal{A}$. Elements of $\mathcal{E}$ are called short exact sequences.
Note that $\mathcal{E}$ is an exact structure on $\mathcal{A}$ if and only if $\mathcal{E}^{op}$ is an exact structure on $\mathcal{A}^{op}$.
For a fixed additive category $\A$, we denote by $\Ex(\A)$ the poset of exact structures $\E$ on $\A$, with order relation given by containment. In fact, $\Ex(\A)$ is a lattice, see section \ref{lattice}.
\end{definition}

\begin{example} \cite[Lemma 10.20]{Bu} Let $(\A, \E)$ be an exact category and $\mathcal{B}$ a full subcategory which is closed under extensions, that is, for every short exact sequence $$X\mono Y\epi Z$$ 
in $\E$ the object $Y$ belongs to $\mathcal{B}$ if the endterms $X$ and $Z$ are objects of $\mathcal{B}$. Then the pairs of $\E$ with components in $\mathcal{B}$ form an exact structure on $\mathcal{B}$.
For example a torsion class of an abelian category forms an exact category since it is an extension closed subcategory.
\end{example}

\subsection{Types of additive categories}
Certain properties of the underlying additive category $\A$  determine which exact structures can exist on $\A$. We recall here the definition of various types of additive categories, and of some classes of short exact sequences. We then discuss in \ref{minimal} and \ref{maximal} some consequences on the existence of exact structures.

We begin with a large class of additive categories, the {\em weakly idempotent complete} categories:
\begin{definition}
Following \cite{Bu}, we call an additive category $\A$ {\em weakly idempotent complete} (w.i.c.) if all retractions have kernels and all sections have cokernels.
In fact, B\"uhler shows in \cite[Lemma 7.1]{Bu} that it is sufficient to have one of the two conditions. Moreover, in \cite[Corollary 7.5]{Bu} it is shown that $\A$ is weakly idempotent complete if and only if every retraction is an admissible epic for all exact structures on $\A$, and dually,  every section is an admissible monic for all exact structures on $\A$.
\end{definition}
\begin{definition}
An additive category $\A$ is \emph{idempotent complete} (i.c) if every morphism $e : X \rightarrow  X$ in $\A$ satisfying $e^{2} = e$ has a kernel, or equivalently, a cokernel. 
\end{definition}

\begin{definition} An additive category $\A$ is \emph{semi-abelian} if it is \emph{pre-abelian} (has kernels and cokernels) and 
the induced canonical map 
$$\bar{f}: Coimf \rightarrow Imf$$ is a bimorphism, i.e, a monomorphism and an epimorphism.
\end{definition}

\begin{definition}\cite[p.524]{RW77} A kernel $(A,f)$ is called \emph{semi-stable} if for every push-out square 
\[\xymatrix{
A \; \ar[d]_{t} 
\ar@{ ->}[r]^{f}  \ar@{}[dr]|{\text{PO}} 
& B\ar[d]^{s_B}\\
C \; \ar@{->}[r]_{s_C} & S}
\]

\noindent the morphism $s_C$ is also a kernel.
We define dually a \emph{semi-stable} cokernel.
A short exact sequence $\xymatrix{
A \, \ar@{ >->}[r]^{i} & B \ar@{ ->>}[r]^{d} & C
}$ is said to be \emph{stable} if
$i$ is a semi-stable kernel and $d$ is a semi-stable cokernel. We denote by $\E_{sta}$ the class of all \emph{stable} short exact sequences.
\end{definition}

\begin{definition}A morphism $f$ is called \emph{strict} if the canonical map 
$\bar{f}$ is an isomorphism.
A short exact sequence $\xymatrix{
A \, \ar@{ >->}[r]^{i} & B \ar@{ ->>}[r]^{d} & C
}$ is said to be \emph{strict} if
$i$ is strict or $d$ is strict. 
We denote by $\E_{str}$ the class of all strict short exact sequences.
\end{definition}

\begin{definition}An additive category $\A$ is \emph{quasi-abelian} if it is \emph{pre-abelian} and all kernels and cokernels are \emph{semi-stable}.

Moreover, an additive category $\A$ is \emph{quasi-abelian} if it is \emph{pre-abelian} and every pullback of a strict epimorphism is a strict epimorphism, and every pushout of a strict monomorphism is a strict monomorphism.
\end{definition}

\begin{definition}An additive category $\A$ is \emph{abelian} if it is \emph{pre-abelian} and all morphisms are \emph{strict}.
\end{definition}
\begin{remark}A \emph{pre-abelian} category admits pullbacks and pushouts.
\end{remark}
\begin{remark}The hierarchy of  additive categories which we discussed here is as follows (where all inclusions are strict):
\end{remark}
\begin{center}
    \begin{tikzpicture}
  [
    circle type 1/.style={draw=none,align=center, font=\scriptsize,text width = 1.25cm},
  ]
  \coordinate (c2) at (0,0);
  \newlength{\smallercircle}
  \foreach \i / \j [count=\ino] in {4.4cm/additive, 
  3.8cm/weakly idempotent complete, 3.2cm/idempotent complete, 2.6cm/pre-abelian, 2.0cm/semi-abelian, 1.4cm/quasi-abelian,
  0.75cm}
    {
     \pgfmathsetmacro{\colmixer}{mod(10*\ino,100)}%
     \path [draw, fill=blue!\colmixer] (c2) ++(0,-\i) arc (-90:270:\i);
     \ifnum\ino<7
       \setlength{\smallercircle}{\i}
       \addtolength{\smallercircle}{-7.5pt}
       \path [decoration={text along path, text={\j}, text align=center, reverse path}, decorate] (c2) ++(0,-\smallercircle) arc (-90:270:\smallercircle);
     \fi
    }
\filldraw[black] (0,0) circle (0pt) node[anchor=center] {abelian};
\end{tikzpicture}
\end{center}
\subsection{The minimum exact structure}\label{minimal}
It is well known that every additive category admits a unique minimal exact structure $\E_{min}$:
\begin{proposition}\label{split}
\cite[example 13.1]{Bu}
For every additive category $\mathcal{A}$ the sequences isomorphic to 
\[\xymatrix{
A \, \ar@{ >->}[r]^{\footnotesize\begin{bmatrix}
   1 \\
    0 \\
\end{bmatrix}
} & A\oplus B \ar@{ ->>}[r]^{[0 1]} & B \\
}
\]
form an exact structure $\mathcal{E}_{min}$, called the split exact structure. 
In fact, every exact structure on $\A$ contains all split exact sequences \cite[Lemma 2.7]{Bu}, which makes $\mathcal{E}_{min}$ the minimum in the lattice $\Ex(\A)$ of all exact structures on $\A$.

If $\mathcal{A}$ is weakly idempotent complete, then each retraction can be completed to a split sequence as above, hence the exact structure $\mathcal{E}_{min}$ is formed by all pairs $(s,r)$ of sections with retractions. 
\end{proposition}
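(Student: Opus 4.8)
The statement decomposes into three claims: that $\E_{min}$ satisfies the axioms of an exact structure; that it is the least element of $\Ex(\A)$; and the description via sections and retractions when $\A$ is weakly idempotent complete. The plan is to treat them in this order, after recording one lemma.

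The lemma I would isolate is a splitting criterion: a kernel--cokernel pair $i\colon A\to Y$, $d\colon Y\to B$ lies in $\E_{min}$ if and only if $i$ admits a retraction, equivalently $d$ admits a section. One implication is transport along the defining isomorphism. For the converse, given a retraction $\rho$ of $i$, the endomorphism $1-i\rho$ of $Y$ annihilates $i$ and hence factors through the cokernel as $1-i\rho=\sigma d$; since $d$ is epic and $i$ is monic one reads off $d\sigma=1$ and $\rho\sigma=0$, so that $i\rho+\sigma d=1_Y$ together with $\rho i=1_A$ and $di=0$ exhibits $Y$ as the biproduct $A\oplus B$ with $i,\sigma$ the injections and $\rho,d$ the projections, identifying $(i,d)$ with the canonical sequence. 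With this lemma, (E0) and (E0)$^{op}$ are immediate, since $1_A$ is split monic and split epic; and (E1), (E1)$^{op}$ follow from the facts that a composite of retractions is a retraction and that, after applying the isomorphisms of the lemma, a composite of canonical admissible monics is again a canonical admissible monic (reassociate the biproduct), hence still sits in a pair of $\E_{min}$.

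For (E2), and dually (E2)$^{op}$, I would again use the lemma to reduce to the case where $i$ is the canonical inclusion $A\to A\oplus B'$; then for any $t\colon A\to C$ the push-out is simply $C\oplus B'$, with the map out of $A\oplus B'$ acting as $t$ on the first summand and as the identity on the second, and with $s_C\colon C\to C\oplus B'$ the canonical inclusion --- the universal property being a one-line check. As $s_C$ is a canonical inclusion, it is an admissible monic, which closes the axiom. For the second claim, let $\E$ be an arbitrary exact structure on $\A$: by (E0)$^{op}$ the map $0\to A$ is an admissible monic, being the kernel of the admissible epic $1_A$, so applying (E2) to it along $0\to B$ shows that the canonical inclusion $B\to A\oplus B$ is an admissible monic; by symmetry so is $A\to A\oplus B$, and taking its cokernel we conclude that the canonical split sequence belongs to $\E$. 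As $\E$ is closed under isomorphism, $\E_{min}\subseteq\E$.

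Finally, suppose $\A$ is weakly idempotent complete and let $r\colon X\to B$ be a retraction with section $\sigma$; by hypothesis $r$ has a kernel $k\colon A\to X$. From $r(1-\sigma r)=0$ one obtains a factorization $1-\sigma r=k\rho$, and since $k$ is monic this yields $\rho k=1_A$ and $\rho\sigma=0$; thus $k$ has a retraction and $k\rho+\sigma r=1_X$ exhibits $(k,r)$ as a canonical split sequence, so $(k,r)\in\E_{min}$. Conversely, every member of $\E_{min}$ is a kernel--cokernel pair whose monic is a section and whose epic is a retraction, so $\E_{min}$ is precisely the class of such pairs. I expect the only genuinely delicate point to be the existence half of (E2): a general additive category need not admit push-outs, so the push-out must be produced by hand, and the splitting criterion is exactly what turns this into the transparent biproduct computation above; everything else is routine bookkeeping with the facts that kernels are monic and cokernels are epic.
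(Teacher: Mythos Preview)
The paper does not actually prove this proposition; it simply records the statement and cites B\"uhler's exposition (Example~13.1 and Lemma~2.7 of \cite{Bu}) for both the verification of the axioms and the minimality claim. Your argument supplies a correct self-contained proof along the lines of B\"uhler's: the splitting lemma you isolate is the standard one, the explicit biproduct construction of the push-out in (E2) is exactly where care is needed (a general additive category need not have all push-outs), and your minimality argument via (E0)$^{op}$ and (E2) is essentially B\"uhler's Lemma~2.7.

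One small point of looseness: for (E1)$^{op}$ you write only that a composite of retractions is a retraction, but strictly you also need the composite to sit in a kernel--cokernel pair of $\E_{min}$. This follows by the same canonical-form reduction you spell out for (E1), and is worth saying explicitly, since in a non-weakly-idempotent-complete category a split epic need not be a cokernel.
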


\begin{example}\label{submonoid}
Let $S \subset (\mN,+)$ be a submonoid, that is, $S$ is an additively closed set containing zero. Consider the category $\mathcal{A}_S$ of vector spaces $V$ over a field $k$ of dimension $\dim_k V \in S$. For a short exact sequence in $\mod k$
$$0 \to U \to V \to W \to 0$$
we have that $U$ and $W$ in $\A_S$ implies $V$ in $\A_S$ since $S$ is additively closed.
Thus $\A_S$ is additive since it is an extension-closed full subcategory of the additive category $\mod\, k$, and by example \ref{split}, the split exact sequences in $\A_S$ form an exact structure $\E_{min}$ on $\A_S$. 
Note that $\A_S$ is not weakly idempotent complete when $S \neq \mN$ since there are retractions whose kernel is not in $\A_S$.
\end{example}

\subsection{The maximum exact structure}\label{maximal}
It is a deeper result that every additive category also admits a unique maximal exact structure $\E_{max}$. We review some of the recent literature on this subject:
\begin{theorem}\label{Emax}\cite[Corollary 2]{Ru11} Every additive category admits a unique maximal exact structure $\E_{max}$.
\end{theorem}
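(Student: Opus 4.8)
The plan is to show that the union of all exact structures on $\A$ is itself an exact structure; since it clearly contains every exact structure, it is then the maximum. The subtlety is that a naive union of classes of kernel-cokernel pairs need not satisfy the closure axioms, so one must work with a better candidate. Following Rump's approach, I would define $\E_{max}$ to be the class of all kernel-cokernel pairs $(i,d)$ such that $i$ is a \emph{semi-stable kernel} and $d$ is a \emph{semi-stable cokernel} — in the terminology of the excerpt, the class $\E_{sta}$ of stable short exact sequences — and prove that this class is an exact structure. The point is that semi-stability is precisely the property extracted from axioms (E2) and (E2)$^{op}$: if $(i,d)$ lies in \emph{some} exact structure $\E$, then by (E2) every pushout of $i$ is again an admissible monic, hence in particular a kernel, so $i$ is a semi-stable kernel; dually $d$ is a semi-stable cokernel. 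Thus every exact structure is contained in $\E_{sta}$, and it remains only to check that $\E_{sta}$ is itself exact.

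First I would verify the easy axioms: (E0) and (E0)$^{op}$ hold because identities sit in split sequences, which are trivially stable; closure under isomorphism is immediate from the definition. Next, (E2) and (E2)$^{op}$: given $i$ a semi-stable kernel and a morphism $t\colon A\to C$, one forms the pushout (which exists — here one needs the standing hypothesis that such pushouts exist, or one restricts to the pairs for which they do) and must check that the pushout map $s_C$ is \emph{again} a semi-stable kernel, i.e. that a further pushout of $s_C$ is still a kernel. This follows from the pasting lemma for pushouts: a pushout of a pushout is a pushout, so a pushout square on $s_C$ glues with the original square to give a pushout square on $i$, whose bottom map is a kernel by semi-stability of $i$; a short diagram chase then identifies this with the relevant map. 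One also checks that $s_C$ is (admissibly) monic by the usual cancellation argument, and that the cokernel of $s_C$ is $d$ up to the pushout of $C$.

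The main obstacle — and the genuinely delicate part of Rump's proof — is the composition axioms (E1) and (E1)$^{op}$: showing that a composite of two semi-stable kernels is again a semi-stable kernel. Here one uses the obscure axiom of Quillen, namely that once (E0), (E1), (E2) and their duals are in place the remaining axiom is automatic, or alternatively one invokes B\"uhler's reformulations; but establishing that composition is well-behaved for \emph{semi-stable} kernels requires carefully pasting pushout diagrams and using that the pushout of a composite factors as a composite of pushouts, together with the semi-stability of each factor to guarantee that the intermediate maps remain kernels. This is where the bulk of the technical work lies, and it is the step I expect to occupy most of the argument; the rest is formal. Once (E0)–(E2) and their duals are verified for $\E_{sta}$, uniqueness is free: any maximal element of $\Ex(\A)$ must contain — hence equal — $\E_{sta}$, so $\E_{max}=\E_{sta}$ is the unique maximum.
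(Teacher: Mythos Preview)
Your approach has a genuine gap: the claim that $\E_{sta}$ is an exact structure on an arbitrary additive category is \emph{false}. The paper itself points this out in the remark immediately following the theorem: one always has $\E_{max}\subseteq\E_{sta}$, but \cite{Ru15} exhibits an additive category in which $\E_{max}\subsetneq\E_{sta}$, so $\E_{sta}$ cannot be an exact structure there. Your identification of (E1) as ``the genuinely delicate part'' is exactly right --- it is the axiom that fails --- but it does not merely require more work; it is not provable in this generality. The argument you sketch is essentially Crivei's theorem \cite{Cr}, which needs the additional hypothesis that $\A$ is weakly idempotent complete.

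There is a second, related problem: in a general additive category pushouts and pullbacks need not exist, so even formulating ``semi-stable'' and verifying (E2) requires care you have swept under the phrase ``here one needs the standing hypothesis that such pushouts exist.'' Rump's actual proof in \cite{Ru11} does not proceed via $\E_{sta}$; it constructs $\E_{max}$ by a different (and more indirect) argument that avoids assuming the existence of arbitrary pushouts. The paper does not reproduce that argument --- the theorem is stated as a citation --- but if you want to supply a proof, you must either restrict to weakly idempotent complete categories (and then you are proving the weaker Theorem of Crivei, not this one) or follow Rump's original route.
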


The drawback of this result is that an explicit description of the maximum exact structure is not known in general. However, for certain types of additive categories, the exact structure $\E_{max}$ can be described explicitly. The following theorem generalizes the result on pre-abelian categories from \cite[Theorem 3.3]{SW}:

\begin{theorem}\cite[Theorem 3.5]{Cr} Let $\A$ be an weakly idempotent complete category. Then the stable exact sequences $\E_{sta}$ define an exact structure on $\A$. Moreover, this is the maximal exact structure $\E_{max}$ on $\A$.
\end{theorem}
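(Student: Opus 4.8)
The plan is to prove the two inclusions $\E_{max}\subseteq\E_{sta}$ and $\E_{sta}\subseteq\E_{max}$. The first inclusion is in fact the stronger statement that \emph{every} exact structure on $\A$ is contained in $\E_{sta}$, and it uses no hypothesis on $\A$. For the second, it will be enough to check that $\E_{sta}$ is itself an exact structure: once this is known, maximality of $\E_{max}$ (which exists by Theorem~\ref{Emax}) forces $\E_{sta}\subseteq\E_{max}$, and together with the first inclusion we obtain $\E_{sta}=\E_{max}$. In particular the verification of the axioms, carried out for the second inclusion, also yields the first assertion of the theorem, that $\E_{sta}$ is an exact structure.

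For the first inclusion, take any exact structure $\E$ on $\A$ and any pair $(i,d)\in\E$, so that $i$ is the kernel of $d$. Given an arbitrary morphism $t\colon A\to C$, axiom (E2) says that the pushout of $i$ along $t$ exists and that the induced morphism $s_C$ is an admissible monic, hence the kernel of some morphism, in particular a kernel. Thus $i$ is a semi-stable kernel, and dually, by (E2)$^{op}$, $d$ is a semi-stable cokernel; so $(i,d)\in\E_{sta}$. Applying this to $\E=\E_{max}$ gives $\E_{max}\subseteq\E_{sta}$.

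For the second inclusion I would verify the axioms for $\E_{sta}$ one by one. Closure under isomorphism is immediate, since being a kernel and being semi-stable are both invariant under isomorphisms of short exact sequences. For (E0) and (E0)$^{op}$ one checks directly that the pairs $(1_A, A\to 0)$ and $(0\to A, 1_A)$ are stable, the point being that the pushouts and pullbacks occurring in the semi-stability conditions are isomorphisms or split inclusions and projections, all of which are kernels, resp.\ cokernels. For (E2), let $(i,d)\in\E_{sta}$ and let $s_C$ be the pushout of $i$ along $t\colon A\to C$; this pushout exists since it is computed as a cokernel built from the cokernel $d$ of $i$. By the pasting law for pushouts, any further pushout square erected on $s_C$ glues with the defining square to a pushout square erected on $i$, whose induced morphism is a kernel because $i$ is semi-stable; hence $s_C$ is again a semi-stable kernel, and one then checks that $s_C$ completes to a stable pair by tracking its cokernel and the map $s_B$ through the pushout. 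Axiom (E2)$^{op}$ is dual.

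The main obstacle is (E1) (and, dually, (E1)$^{op}$): the composite $ji$ of two semi-stable kernels $i\colon A\to B$ and $j\colon B\to C$ must again be a semi-stable kernel sitting in a stable pair. One first shows that $ji$ is a kernel, then constructs its cokernel and verifies semi-stability, carrying out the pullback and pushout manipulations familiar from B\"uhler's analysis of $\E_{max}$ and from the pre-abelian case treated in \cite{SW}. It is exactly here --- when one forms the canonical morphisms required by these arguments and needs them to admit kernels and cokernels --- that the hypothesis that $\A$ is weakly idempotent complete is used essentially; the same hypothesis also underlies the stable-pair bookkeeping in the verification of the other axioms. Once (E1) and (E1)$^{op}$ are established, $\E_{sta}$ is an exact structure, so $\E_{sta}\subseteq\E_{max}$, and with the first inclusion we conclude $\E_{sta}=\E_{max}$.
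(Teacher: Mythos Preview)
The paper does not give its own proof of this theorem: it is stated with a citation to \cite[Theorem 3.5]{Cr} and no argument is supplied. So there is nothing in the paper to compare your proposal against directly.

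That said, your overall strategy is the standard one and matches the approach in Crivei's paper: show $\E\subseteq\E_{sta}$ for every exact structure $\E$ (hence in particular $\E_{max}\subseteq\E_{sta}$), then verify that $\E_{sta}$ satisfies Quillen's axioms, whence $\E_{sta}\subseteq\E_{max}$ by maximality. Your argument for the first inclusion is correct and complete. However, your treatment of the second inclusion is only an outline: for (E2) you assert that ``one then checks that $s_C$ completes to a stable pair'' without indicating how the cokernel of $s_C$ is shown to be semi-stable, and for (E1) you essentially just name the difficulty and gesture at \cite{SW} and B\"uhler without carrying out any of the actual work. Since (E1) is precisely the axiom whose verification is nontrivial and is where weak idempotent completeness enters, a proof that stops at ``carrying out the pullback and pushout manipulations'' has not really addressed the substance of the theorem. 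If you intend this as a genuine proof rather than a pointer to the literature, the (E1) step needs to be written out.
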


\begin{remark}The short exact sequences forming the maximal exact structure $\E_{max}$ do not always coincide with the stable short exact sequences in $\E_{sta}$. In fact we have that $\E_{max}\subseteq \E_{sta}$, so in case the class $\E_{sta}$ forms an exact structure it will be the maximal one.
See \cite{Ru15} for an example where $\E_{max}\subsetneq \E_{sta}$.
\end{remark}

\begin{theorem}\cite[1.1.7]{Sch}(\cite[4.4]{Bu}) In any quasi-abelian category, the class of all short exact sequences defines an exact structure $\E_{all}$ and this is the maximal one $\E_{max}=\E_{all}$. In particular this is the case for abelian categories (see also \cite{Ru01}).
\end{theorem}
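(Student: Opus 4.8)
The plan is to work throughout with the pushout/pullback characterisation of quasi-abelian categories recorded above, and to verify the six axioms (E0)--(E2)$^{op}$ for the class $\E_{all}$ of all kernel--cokernel pairs on $\A$; closure under isomorphism is automatic, since being a kernel (resp.\ a cokernel) is isomorphism-invariant. The first step is to identify the admissible morphisms of $\E_{all}$. Since $\A$ is pre-abelian, a monomorphism $i$ is strict precisely when it is a kernel, in which case $i=\ker(\coker i)$; hence a morphism $i$ is an admissible monic for $\E_{all}$ if and only if $i$ is a strict monomorphism: if $(i,d)\in\E_{all}$ then $i=\ker d$, and conversely if $i$ is a strict monomorphism then $(i,\coker i)\in\E_{all}$. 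Dually, the admissible epics of $\E_{all}$ are exactly the strict epimorphisms. With this dictionary, the axioms reduce to the statements that identities are strict monics/epics, that strict monics (resp.\ strict epics) are closed under composition, and that the pushout of a strict monic along an arbitrary morphism exists and is a strict monic (resp.\ dually for pullbacks of strict epics).

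Axioms (E0) and (E0)$^{op}$ are immediate, as $1_A=\ker(A\to 0)$ and $1_A=\coker(0\to A)$. For (E2) and (E2)$^{op}$ there is essentially nothing to do: a pre-abelian category has all pushouts and pullbacks, and the defining property of a quasi-abelian category is precisely that the pushout of a strict monomorphism along any morphism is again a strict monomorphism, and dually for pullbacks of strict epimorphisms---which is exactly what (E2) and (E2)$^{op}$ demand of $\E_{all}$.

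The axiom that requires real work is (E1) (and, dually, (E1)$^{op}$): the composite of two strict monomorphisms $i\colon A\to B$ and $j\colon B\to C$ is again a strict monomorphism. I would argue as follows. Put $c=\coker i\colon B\to D$ and $q=\coker j\colon C\to E$, and form the pushout of $c$ along $j$:
\[
\xymatrix{
B \ar[r]^{c} \ar[d]_{j} \ar@{}[dr]|{\text{PO}} & D \ar[d]^{j''}\\
C \ar[r]_{c'} & P
}
\]
By quasi-abelianness, the pushout $j''$ of the strict monomorphism $j$ is again a strict monomorphism, in particular monic. Let $\varphi\colon C\to P\oplus E$ be the morphism with components $c'$ and $q$. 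Then $\varphi\circ(ji)=0$, since $c'ji=j''ci=0$ and $qji=0$; so $ji$ factors through $\ker\varphi$. Conversely, if $x\colon X\to C$ satisfies $\varphi x=0$, then $qx=0$ yields a unique $y$ with $x=jy$ (as $j=\ker q$); then $0=c'x=c'jy=j''cy$, and monicity of $j''$ forces $cy=0$, so $y=iz$ for a unique $z$ (as $i=\ker c$), whence $x=jiz$, with $z$ unique because $ji$ is monic. Thus $ji$ has the universal property of $\ker\varphi$, so $ji$ is a kernel, that is, a strict monomorphism. The dual argument gives (E1)$^{op}$, and this completes the verification that $\E_{all}$ is an exact structure on $\A$.

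Maximality is then a formality: every exact structure on $\A$ is, by definition, a class of kernel--cokernel pairs, hence contained in $\E_{all}$; since $\E_{all}$ is itself an exact structure, $\E_{max}=\E_{all}$. Finally, in an abelian category every morphism is strict, so the kernel--cokernel pairs coincide with the usual short exact sequences, and since abelian categories are quasi-abelian the general statement specialises to the classical one. The only genuine obstacle in this plan is the composition axiom (E1)/(E1)$^{op}$ above; the rest is a re-reading of ``pre-abelian, with kernels and cokernels semi-stable''.
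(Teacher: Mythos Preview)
Your argument is correct. Note, however, that the paper does not supply its own proof of this theorem: it is quoted as a result from \cite[1.1.7]{Sch} and \cite[4.4]{Bu}, with no further argument given. So there is no ``paper's proof'' to compare against beyond the cited references.

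That said, your proof is essentially the one found in those sources. The identification of $\E_{all}$-admissible monics (resp.\ epics) with strict monomorphisms (resp.\ epimorphisms) is the right starting point, and your treatment of (E0), (E0)$^{op}$, (E2), (E2)$^{op}$ is exactly how the axioms collapse onto the definition of quasi-abelian. The only substantive step is (E1), and your pushout argument---forming the pushout of $c=\coker i$ along $j$, using stability of strict monics under pushout to get that $j''$ is monic, and then checking directly that $ji$ satisfies the universal property of $\ker(c',q)$---is the standard one (this is precisely how B\"uhler organises the proof in \cite[4.4]{Bu}). The maximality statement and the specialisation to abelian categories are, as you say, formalities once $\E_{all}$ is known to be an exact structure.
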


\begin{remark}The class of all short exact sequences $\E_{all}$ does not necessarily form an exact structure for any additive category since pushouts of kernels need not be kernels. For a counter-example, take the category of abelian $p-$groups with no elements of infinite height, see \cite[page 522]{RW77}.
But in case $\E_{all}$ forms an exact structure, it will be the maximal one.
\end{remark}

\subsection{More examples}
\begin{example}\label{triangulated}
If  $\A$ is a triangulated category then every monomorphism splits, and so $\E_{max}=\E_{sta}=\E_{min}$ forms the only possible exact structure on $\A$.
\end{example}
\begin{example}\cite{Qu} A quasi-abelian category $\A$ together with $\E_{str}$ is an exact category $(\A, \E_{str})$. See also \cite[section 4]{Bri}.
\end{example}
\begin{example}Every full subcategory of an abelian category which is closed under direct sums and direct summands is idempotent complete and $\E_{max}=\E_{sta}$.
\end{example}

\begin{example}\cite{Ru08} Let $A=kQ/I$ be the path algebra over a field $k$ given by the following quiver $Q$ with relations $I$ generated by commutativity relations at the two squares (note that the algebra $A$ is tilted of type $E_6$):

\[
\xymatrix{
1 \ar@{ ->}[r] \ar@{->}[d]
& 2 \ar@{->}[d] & 3 \ar@{ ->}[l] \ar@{->}[d] & \\
4\ar@{->}[r] & 5 & 6 \ar@{->}[l]}
\]
 We consider the category $\A=A-proj$ of finitely generated projective $A-$modules. 
This  $\A$ was the first example of a semi-abelian category which is not quasi-abelian.
In particular, $\A$ is weakly idempotent complete
and $\E_{max}=\E_{sta}$. 
\end{example}

\begin{example}
Let $\A$ be the category of Banach spaces or Fr\'echet spaces, then $\A$ is quasi-abelian and $\E_{max}=\E_{all}$.
\end{example}
\section{The quiver of an exact category}\label{section 3}

\subsection{ $\E-$subobjects and $\E-$simple objects}

Throughout this section, let $\mathcal{E}$ be an exact structure for an additive category $\A$.
We define the notion of $\E-$subobjects and $\E-$simple objects.
\begin{definition}Let $A$ and $B$ be objects of $(\mathcal{A},\E)$. 
We write $A {\subset}_{\mathcal{E}} B $ and say that $A$ is an {\em admissible subobject} or {\em $\mathcal{E}-$subobject of $B$}, if there is an admissible monic $A \imono{i} B$ from $A$ to $B$. 
If in addition $i$ is not an isomorphism, we use the notation  $A {\subsetneq}_{\mathcal{E}} B $ and say that $A$ is a \emph{proper} admissible subobject of $B$.
\end{definition}

\begin{remark}\label{zero coker}
An admissible monic $A \imono{i} B$ is proper precisely when its cokernel is non-zero. 
In fact, by uniqueness of kernels and cokernels, the exact sequence 
$$B\imono{1_B} B\epi 0$$ 
is, up to isomorphism, the only one with zero cokernel. Thus an admissible monic $i$ has $\coker i = 0$ precisely when $i$ is an isomorphism. Dually, an admissible epic $B \iepi{d} C$ is an isomorphism precisely when $\ker d = 0$.
\end{remark}

\begin{definition}
A non-zero object $S $ in $(\A,\E)$ is {\em $\mathcal{E}-$simple} if $S$ admits no $\E-$sub\-objects except $0$ and $S$, that is, whenever $ A \subset_\E S$, then $A$ is the zero object or isomorphic to  $S$.
\end{definition}
\begin{remark}\label{E-simple indecomposable}
An $\mathcal{E}-$simple object is indecomposable, since the canonical inclusion 
$X_1 \imono{i_1} X_1 \oplus X_2$
is admissible in every exact structure, see example \ref{split}.
Conversely, when  $\mathcal{E}$ is the split exact structure from example \ref{split}, then every indecomposable object is $\mathcal{E}-$simple. 
\end{remark}

\begin{example}\label{submonoid2}
Consider the category $\A_S$ of vector spaces from example \ref{submonoid} for the monoid $S = \N \backslash \{ 1 \}$, equipped with the split exact structure $\E=\E_{min}$.
Then the $\E-$simple objects in $\A_S$ are $k^2$ and $k^3$, up to isomorphism.
This corresponds to the fact that the monoid $S$ admits $\{2,3\}$ as minimal generating set.
\end{example}

\subsection{The quiver of $(\A,\E)$}\label{quiver section}

The aim of this section is to define the quiver of an exact category, and compare it with different notions studied in the literature. We assume here that $\A$ is not only additive, but a $k-$category for some field $k$. It is shown in \cite{DRSS} that the datum of an exact structure $\E$ on $\A$ corresponds to the choice of an additive bifunctor 
$$\Ext_\E(-,-) : \A^{op} \times \A \to Mod_k$$ which is closed in the sense of M.C.R. Butler and G. Horrocks \cite{BH}.
Here $\Ext_\E(Z,X)$ denotes the set of all exact pairs 

$$X\imono{i} Y\iepi{d} Z$$  in $\E$ modulo the usual equivalence relation of short exact sequences, which turns into a vector space under Baer sum.

\begin{definition}\label{rad-def}
We recall the (Jacobson) radical of an additive $k$-category and its powers from \cite{ASS}:
\begin{enumerate}
\item[-] The radical $\rad_\A$ of an additive $k$-category $\A$  is the two-sided ideal given for all objects X and Y in $\A$ by the $k$-vector space $\rad_\A(X,Y)$ formed by all $f\in\A(X,Y)$ such that $1_X-g\circ f$ is invertible for all $g\in\A(Y,X)$.
\item[-] Given $m\geq1$, the $m^{th}$  power  $ rad_{\A}^{m}\subseteq rad_{\A}$ of $rad_{\A}$ is obtained by taking for $rad_{\A}^{m}(X,Y)$ the subspace of $rad_{\A}(X,Y)$ containing all finite sums of morphisms of the form
$$\xymatrixcolsep{2pc}\xymatrix{
X=X_{1}\ar[r]^{f_1}&X_{2}\ar[r]^{f_2}&\dots\ar[r]^{f_{m-1}}&X_{m-1}\ar[r]^{f_{m}}&X_m=Y  \; }$$
where $f_i:X_{i-1}\rightarrow X_{i} \in rad_{\A}(X_{i-1},X_i)$ for all i=1,\dots,m.
\end{enumerate}
\end{definition}


\begin{definition}\label{quiver-def}Let $\A$ be a small skeletally small and Hom-finite additive category. The \emph{quiver $Q(\A,\E)$ of the exact category $(\A,\E)$} is the graded quiver given as follows:
\begin{enumerate}
\item[-] the vertex set $Q_0(\A,\E)$ is the set of isomorphism classes of $\E-$simple objects.
\end{enumerate}
For two vertices represented by $\E-$simple objects $X$ and $Y$, we further define:  
\begin{enumerate}
\item[-] the number of arrows of degree zero from $X$ to $Y$ equals  the dimension of the space $\irr(X,Y)=\rad_\A(X,Y) / \rad^2_\A(X,Y)$ of irreducible morphisms in $\A$ from $X$ to $Y$.
\item[-] the number of arrows of degree one from $X$ to $Y$ equals  the dimension of the vector space $\Ext_\E(X,Y).$
\end{enumerate}
We draw in illustrations the arrows of degree zero by dotted lines, and the arrows of degree one by solid lines.
\end{definition}

\begin{example}
Let $k$ be an algebraically closed field, $A$ be an artinian $k-$algebra, and $\A = \mod A$. When $\E = \E_{all}$ is the maximal exact structure $\E_{max}$, then the quiver $Q(\A,\E_{max})$ is the ordinary (Gabriel) quiver of the algebra $A$, with all arrows of degree one.
For the minimal exact structure $\E = \E_{min}$, the simples are the indecomposable $A-$modules by \ref{E-simple indecomposable}, and the 
quiver $Q(\A,\E_{min})$ is the Auslander-Reiten quiver of $A$, with all arrows of degree zero. 
We will discuss in section \ref{section:matrix reduction} how reduction of exact structures transforms iteratively the Gabriel quiver into the Auslander-Reiten quiver of an algebra.
\end{example}

\begin{example}\label{posetrep}
The technique of matrix reduction has been studied using various models, such as representations of posets, subspace categories, bimodules or bocses.  We recall here from \cite[2.3 example 6]{GR} one example, the representations of posets (see also the books \cite{Ri84,Si}):
Given a poset $(S, \le)$, the category $\rep S$ of representations of $S$ is formed by matrices whose columns are subdivided into blocks corresponding to the elements $\{s_1, \ldots,s_n\}$ of $S$.
More formally, the objects of $\rep S$ are pairs $(d,M)$ where $d \in \N^{n+1}$, and $M$ is a matrix with entries in $k$ that has $d_0$ rows and $d_1+\cdots +d_n$ columns, subdivided into $n$ blocks of size $d_1,\ldots,d_n$:
$$M= \left[ \begin{array}{c|c|c}
    M_1 & \cdots & M_n  
  \end{array}\right]$$

A morphism $(d,M) \to (d',M')$ is given by a pair  of matrices $(X,Y)$ such that $XM=M'Y$ and where the matrix $Y$ has a block structure determined by the order relation in $S$, allowing operations from columns in block $i$ to columns in block $j$ only if $s_i \le s_j $ in $S$.
One could equivalently define an element in rep S as a couple (V,M) where M is the same matrix as defined earlier and V is a set of $n+1$ $k$-vector spaces $\{V_{0},V_{1},...,V_{n}\}$ of dimensions $\{d_{0},d_{1},...,d_{n}\}$ respectively. Thus, a morphism can be illustrated with the following commutative diagram:
$$\xymatrixcolsep{4pc}\xymatrix{
V_{0} \; \ar[d]_{X} 
& V_{1}\times...\times V_{n}\ar[d]^{Y} \ar@{->}[l]^{M\qquad}\\
V_{0}' \;  & V_{1}'\times ...\times V_{n}'\ar@{->}[l]_{M'\qquad}}$$

As in \cite[section 9.1, example 5]{GR} (see also \cite[4.2]{DRSS} or \cite[2.3]{BrHi}), we equip the $k-$category $\A = \rep S$ with the exact structure $\E$ whose admissible monics are formed by morphisms $(X,Y)$ where both $X$ and $Y$ are sections. 

Let us consider (V,M) in rep S where $d_{0}\geq 1$ and the following morphism:
$$\xymatrixcolsep{4pc}\xymatrix{
s_0: & k \; \ar[d] 
&0\times...\times 0 \ar[d] \ar@{->}[l]\\
 & V_{0} \;  & V_{1}\times ...\times V_{n}\ar@{->}[l]_{M\quad}}$$
The object $s_0:= (\{k,0,...,0\},0)$ is a 
simple $\E-$subobject of $(V,M)$. In fact $s_0$ is the unique simple object having $d_{0}\geq 1$. Let us now fix $d_{0}=0$ and $(V,M)\neq(0,0)$. Suppose $d_{i} \geq 1$ for a certain $i$, then the following morphism gives  an $\E$-subobject $s_{i}$ of (V,M):
$$\xymatrixcolsep{4pc}\xymatrix{
s_i: & 0 \; \ar[d] 
 &0\times...\times k\times...\times 0 \ar[d]^{q_{i}} \ar@{->}[l]\\
 & 0 \;  & V_{1}\times ...\times k^{d_{i}}\times...\times V_{n}\ar@{->}[l]}$$

where $s_{i} = (V',0)$ with $V_{i}'=k$ and all other spaces zero. This shows that the set $\{s_0,s_{1},...,s_{n}\}$ gives all $\E$-simple objects in rep S. 
There is a non-zero morphism $f_{ij}$ from $s_i$ to $s_j$ whenever  $s_i \le s_j$ in the poset $S$. Note that each of these morphisms $f_{ij}$ is a monomorphism and an epimorphism in the category $\rep S$, but for $i \neq j$ these are not isomorphisms, and not admissible monics nor admissible epics.

Furthermore, the following family of short exact sequences determines arrows of degree one in the quiver $Q(\A,\E)$: 
$$\xymatrixcolsep{4pc}\xymatrix{
s_i: & 0 \; \ar[d] 
&0\times...\times k\times...\times 0 \ar[d]^{q_{i}} \ar@{->}[l]\\
 & k \ar@{->}^{1}[d]\;  & 0\times ...\times k\times...\times 0\ar@{->}[d]\ar@{->}_{1\qquad\quad }[l]\\
s_0: & k & 0\times...\times 0 \ar@{->}[l]}$$
In fact, one can verify that $\dim\Ext_\E(s_0,s_i)=1$ and that there are no other extensions between these objects. The quiver $Q(\A,\E)$ is therefore formed by the Hasse quiver of $S$ with arrows of degree zero, together with an extra vertex $s_0$ that sends an arrow of degree one to each vertex of $S$.
We illustrate below an example of the quiver $Q(\A,\E)$ for a poset $S$ with Hasse diagram $H(S)$:
\begin{center}
\begin{figure}[h]
\centering
\begin{tikzpicture}[scale=0.4]
	\centering
   \node[draw=none] at (-4,-2){$H(S):$};
   \node[draw=none] at (0,0){$s_1$};
   \node[draw=none] at (2,-2){$s_3$}; 
   \node[draw=none] at (-2,-2){$s_2$}; 
   \node[draw=none] at (0,-4){$s_4$}; 
   \node[draw=none] at (6,-2){$Q(\A,\E):$};
   \node[draw=none] at (11,0){$s_{1}$};
   \node[draw=none] at (13,-2.5){$s_{3}$}; 
   \node[draw=none] at (9,-1.5){$s_{2}$}; 
   \node[draw=none] at (11,-4){$s_{4}$};
   \node[draw=none] at (16,-2){$s_0$};
   
   \draw[->](0.5,-0.5)--(1.5,-1.5);
   \draw[->](-0.5,-0.5)--(-1.5,-1.5);
   \draw[->](1.5,-2.5)--(0.5,-3.5);
   \draw[->](-1.5,-2.5)--(-0.5,-3.5);
   \draw[->,dotted](11.5,-0.5)--(12.5,-2.25);
   \draw[->,dotted](10.5,-0.5)--(9.5,-1.25);
   \draw[->,dotted](12.5,-2.75)--(11.5,-3.5);
   \draw[->,dotted](9.5,-1.75)--(10.5,-3.5);
   \draw[<-](11.5,0)--(15.5,-1.75);
   \draw[<-](11.5,-4)--(15.5,-2.25);
   \draw[<-](13.5,-2.5)--(15.5,-2.125);
   \draw[<-](9.5,-1.5)--(15.5,-1.875);
   
\end{tikzpicture}
\end{figure}
\end{center} 
\end{example}

\section{Matrix reduction versus exact structures}\label{section:matrix reduction}

The aim of this section is to link matrix reduction to reduction  of exact structures.
We first present in \ref{subsection:matrix reduction} as an example a chain of matrix reductions, and illustrate some intermediate steps by certain quivers with dashed and solid arrows.
We then justify these pictures in \ref{subsection:exact structure reduction} and \ref{subsection:new exact structure}, showing that they are in fact the quivers of certain exact categories corresponding precisely to the intermediate steps of matrix reductions.

\begin{definition}\label{def reduction}A reduction of an exact category $(\A, \E)$ is the choice of an exact structure $\E' \subseteq \E$ giving rise to a new exact category $(\A, \E')$.
Here  we mean by $\E' \subseteq \E$  that every exact pair $(i, d)\in \E'$ also belongs to $\E$.
\end{definition}

\subsection{Matrix reduction}\label{subsection:matrix reduction}

We describe here an example of a matrix reduction, and later compare it to  reduction of exact structures. The matrix reduction is discussed in \cite[1.2]{GR}, we refer to some background there.

Reduction for a quiver of type $A_3$ is also discussed in example 4.56 in \cite{Ku}, where the bocs point of view is given, compare the biquivers shown there.
\color{black}
Consider the category $\A = \rep Q$ of representations of  the quiver
\[ Q : \qquad \xymatrix{ 1 \ar[r]^{\alpha}  & 2  & 3 \ar[l]_{\beta}}\]
The category $\A$ is equivalent to the category $ \rep S$ of representations of  the poset
$S=\{1,3\}$ of two incomparable elements.
As in example \ref{posetrep}, its objects $(d,M)$ are given by pairs of matrices $A$ and $B$ with the same number of rows, we write it as follows:
$$M=[A|B]$$
 The algebra $\Lambda$ operating on representations of dimension vector $d=(d_1,d_2,d_3)$ is given by pairs of square matrices $(X,Y)$ where $X \in k^{d_2 \times d_2}$ and
$$Y = \left[\begin{array}{cc}
     Y_1 & 0  \\
     0 & Y_3\\
\end{array}\right].$$
Note that $\Lambda$ is semisimple, and the quiver of  $\Lambda$ has three isolated vertices corresponding to the three simple representations $S_1,S_2,S_3$ of $Q$. The arrows of $Q$ describe extensions between these simple objects, corresponding to the two matrices $A$ and $B$.
\medskip

{\bf (1)} We choose in the first reduction step to transform the matrix $A$ into its normal form:
\[\xymatrix{
{\begin{bmatrix}
     A | B \\
\end{bmatrix}
}\ar@[red][r] & 
{\left[\begin{array}{cc|c}
     0 & 1 & B' \\
     0 & 0 & B''\\
\end{array}\right]  }
} \]
Here we denote by $1$ the identity matrix of size rank $A$. We now restrict the operating algebra to the subalgebra $\Lambda_1 \subset \Lambda$ formed by those pairs $(X,Y)$ that preserve the normal form on the matrix $A$, that is, 
$$X \left[\begin{array}{cc}
     0 & 1  \\
     0 & 0 \\
\end{array}\right]  = \left[\begin{array}{cc}
     0 & 1  \\
     0 & 0 \\
\end{array}\right]
\left[\begin{array}{cc}
     Y_1 & 0  \\
     0 & Y_3\\
\end{array}\right].
$$
Thus the matrix $X$ is replaced by a matrix 
\
$$X' = \left[\begin{array}{cc}
     X'_1 & X_{12}  \\
     0 & X''_1\\
\end{array}\right]$$

which induces a subdivision of the rows into two blocks labeled $2'$ and  $2''$.
The quiver of the algebra $\Lambda_1$ turns out to be the following: 

\[  \xymatrix{ 2'' \ar@{.>}[r]  & 2' \ar@{.>}[r]  & 1  & 3}\]
The yet unreduced part of the matrix, given by the two blocks $B'$ and $B''$, corresponds to extensions from $3$ to the row-blocks $2'$ and $2''$. We might visualize this by introducing two solid arrows:

\[  \xymatrix{  & 3 \ar[d]^{\beta'} \ar[dl]_{\beta''} & \\ 2'' \ar@{.>}[r]  & 2' \ar@{.>}[r]  & 1 }\]
\bigskip

{\bf (2)} In the second reduction step, we  transform the matrix $B''$ into its normal form, and use row transformations to produce a zero block above the newly created identity matrix in the part corresponding to matrix $B$:

\[\xymatrix{
{\left[\begin{array}{cc|c}
     0 & 1 & B' \\
     0 & 0 & B''\\
\end{array}\right]  }
\ar@[blue][r] & 
{\left[\begin{array}{cc|cc}
     0 & 1 & B_3 & 0 \\
     0 & 0 & 0 & 1\\
     0 & 0 & 0 & 0\\
\end{array}\right]  }
} \]
\bigskip

{\bf (3)} In the final reduction step, we  transform the matrix $B_3$ to normal form:

\[\xymatrix{
{\left[\begin{array}{cc|cc}
     0 & 1 & B_3 & 0 \\
     0 & 0 & 0 & 1\\
     0 & 0 & 0 & 0\\
\end{array}\right]  }
\ar@[green][r] & 
{\left[\begin{array}{ccc|ccc}
     0 & 1 & 0 & 0 & 1 & 0 \\
     0 & 0 & 1 & 0 & 0 & 0 \\
     0 & 0 & 0 & 0 & 0 & 1 \\
     0 & 0 & 0 & 0 & 0 & 0 \\
\end{array}\right]  }
} \]

At this moment, the representation $(d,M)$ is completely decomposed into a direct sum of indecomposable representations. The algebra $\Lambda_3$ of transformations preserving this decomposition is Morita-equivalent to the Auslander algebra of $Q$, hence the quiver of $\Lambda_3$ is the Auslander-Reiten quiver of $Q$.

\subsection{Reduction of exact structures}\label{subsection:exact structure reduction}
To illustrate the change of exact structures, we consider in this section one example of an additive category $\A$ and describe all its exact structures. 
We recall from \cite[2.10]{Bu} that an exact structure $\E$ can be viewed as an additive subcategory of the category Ch$(\A)$ of chain complexes of objects in $\A$. 
Thus, we can talk about indecomposable short exact sequences, and use notation like the direct sum $e \oplus e'$ of short exact sequences in $\E$, or add$(e)$ denoting the additive subcategory of $\E$ generated by the short exact sequence $(e)$.
\bigskip

We reconsider now the following example
\begin{example}\label{cube}
Consider the category $\A = \rep Q$ of representations of  the quiver
\[ Q : \qquad \xymatrix{ 1 \ar[r]^{\alpha}  & 2  & 3 \ar[l]_{\beta}}\]

The Auslander-Reiten quiver of $\A$ is as follows:
\begin{center}
\begin{tikzpicture}

\node (a) at (-1,0) {$S_2$};
\node (b) at (0,1) {$P_1$};
\node (c) at (0,-1) {$P_3$};
\node (d) at (1,0) {$I_2$};
\node (e) at (2,1) {$S_3$};
\node (f) at (2,-1) {$S_1$};
 \draw [-latex] (a) -- (b);
 \draw [-latex] (a) -- (c);
 \draw [-latex] (b) -- (d);
 \draw [-latex] (c) -- (d);
 \draw [-latex] (d) -- (e);
 \draw [-latex] (d) -- (f);
 \draw [dotted,-] (b) -- (e);
 \draw [dotted,-] (c) -- (f);
 \draw [dotted,-] (a) -- (d);
 \end{tikzpicture}
 \end{center}

We consider the following indecomposable non-split exact sequences where the first three are the Auslander-Reiten sequences:

\begin{enumerate}
\item[(AR1)] \hspace{2.5cm}$\xymatrix{ 0 \ar[r] & P_1 \ar[r]& I_2 \ar[r] & S_3  \ar[r] & 0}$
\item[(AR2)] \hspace{2.5cm}$ \xymatrix{ 0 \ar[r] & P_3 \ar[r]& I_2 \ar[r] & S_1 \ar[r]  & 0 }$
\item[(AR3)] \hspace{2cm}$ \xymatrix{ 0 \ar[r] & S_2 \ar[r] & P_1\oplus P_3 \ar[r] & I_2 \ar[r]  & 0 }$
\item[(4)] \hspace{2.5cm}$ \xymatrix{ 0 \ar[r] & S_2 \ar[r]& P_1 \ar[r] & S_1  \ar[r] & 0}$
\item[(5)] \hspace{2.5cm}$ \xymatrix{ 0 \ar[r] & S_2 \ar[r]& P_3 \ar[r] & S_3  \ar[r] & 0}$
\end{enumerate}
\bigskip

The following list enumerates \emph{all} exact structures $\E$ on $\A$:
\begin{enumerate}
 
  \item[$\bullet$]$\E_{min}$ is the class of all split short exact sequences, 
   \item[$\bullet$]${\E_{1}} =  \{X\oplus Y| X \in \E_{min} , Y \in \add(AR1)\}$,
   \item[$\bullet$]${\E_{2}} =  \{X\oplus Y| X \in \E_{min} , Y \in \add(AR2)\}$,

 \item[$\bullet$]${\E_{3}} =  \{X\oplus Y| X \in \E_{min} , Y \in \add(AR3)\}$,

\item[$\bullet$]${\E_{1,2}} = \{X\oplus Y\ | X \in \E_{1} , Y \in \E_2\}$,
 \item[$\bullet$]${\E_{1,3,5}} =  \{X\oplus Y\oplus Z | X \in \E_{1} , Y \in \E_{3} , Z \in \add(5)\}$,
 \item[$\bullet$]${\E_{2,3,4}} =  \{X\oplus Y\oplus Z | X \in \E_{2} , Y \in \E_{3} , Z \in \add(4)\}$,
 \item[$\bullet$]$\E_{max}$ is the class of all short exact sequences in $\A$. 

 \end{enumerate}
 \bigskip
 
 We have exactly $2^3 = 8$  exact structures since $\A$ admits 3 Auslander-Reiten sequences, and every exact structure is uniquely determined by a choice of a set of Auslander-Reiten sequences, see \ref{Grenouille verte}.
 
 \bigskip
 
Hence the lattice of exact structures $Ex({\A})$ is a cube, where the oriented arrows present inclusions:

\begin{center}
\begin{tikzpicture}[scale=3]

 \node (a) at (0,0,0) {$\E_{min}$};

 \node (b) at (1,0,0) {$\E_{1}$};
 \node (c) at (1,1,0) {$\E_{1,3,5}$};
 \node (d) at (0,1,0) {$\E_{3}$};

 \node (e) at (0,0,1) {$\E_{2}$};
 \node (f) at (1,0,1) {$\E_{1,2}$};
 \node (g) at (1,1,1) {$\E_{max}$};
 \node (h) at (0,1,1) {$\E_{2,3,4}$};
 
 \draw [-latex] (a) -- (e);
 \draw [-latex] (e) -- (h);
 \draw [-latex] (h) -- (g);
 
 \draw [-latex] (a) -- (b);
 \draw [-latex] (b) -- (c);
 \draw [-latex,red] (c) -- (g);
 
 \draw [-latex,green] (a) -- (d);
 \draw [-latex] (d) -- (h);
 \draw [-latex,blue] (d) -- (c);
 
 \draw [-latex] (b) -- (f);
 \draw [-latex] (e) -- (f);
 \draw [-latex] (f) -- (g);
\end{tikzpicture}
\end{center}
\end{example}

Furthermore, the following diagram describes the quiver of the exact category \ref{quiver-def}, associated to each exact structure in the previous example.

Compare  also the biquivers in \cite{Ku}, example 4.56.
\color{black}
\begin{center}
\begin{figure}[h]
\centering
\begin{tikzpicture}[scale=0.5]
	\centering 
    \node[draw=none] at (2,-0.6){$Q(\A,\E_{2})$};
    \node[draw=none] at (15,-0.6){$Q(\A,\E_{1,2})$};
    \node[draw=none] at (11,6.4){$Q(\A,\E_{min})$};
    \node[draw=none] at (24,6.4){$Q(\A,\E_{1})$};
    \node[draw=none] at (2,17.6){$Q(\A,\E_{2,3,4})$};
    \node[draw=none] at (11,24.6){$Q(\A,\E_{3})$};
    \node[draw=none] at (15,17.6){$Q(\A,\E_{max})$};
    \node[draw=none] at (24,24.6){$Q(\A,\E_{1,3,5})$};
   \draw[-](0,0)--(0,4)--(4,4)--(4,0)--(0,0);
   \draw[-](0,13)--(0,17)--(4,17)--(4,13)--(0,13);
   \draw[-](13,0)--(13,4)--(17,4)--(17,0)--(13,0);
   \draw[-](13,13)--(13,17)--(17,17)--(17,13)--(13,13);
   \draw[-](9,7)--(9,11)--(13,11)--(13,7)--(9,7);
   \draw[-](22,7)--(22,11)--(26,11)--(26,7)--(22,7);
   \draw[-](9,20)--(9,24)--(13,24)--(13,20)--(9,20);
   \draw[-](22,20)--(22,24)--(26,24)--(26,20)--(22,20);
   \draw[->](5,15)--(12,15);
   \draw[->](2,5)--(2,12);
   \draw[->](15,5)--(15,12);
   \draw[->,green](11,12)--(11,19);
   \draw[->](24,12)--(24,19);
   \draw[<-](5,5)--(8,8);
   \draw[<-](18,5)--(21,8);
   \draw[->](14,9)--(21,9);
   \draw[->](5,2)--(12,2);
   \draw[<-](5,18)--(8,21);
   \draw[<-,red](18,18)--(21,21);
   \draw[->,blue](14,22)--(21,22);  
   \node[draw=none] at (9.5,9){$S_{2}$};
   \node[draw=none] at (10.5,10){$P_{1}$};
   \node[draw=none] at (10.5,8){$P_{3}$};
   \node[draw=none] at (11.5,9){$I_{2}$};
   \node[draw=none] at (12.5,10){$S_{3}$};
   \node[draw=none] at (12.5,8){$S_{1}$};
   \draw[->,dotted](9.75,9.25)--(10.25,9.75);
   \draw[->,dotted](9.75,8.75)--(10.25,8.25);
   \draw[->,dotted](10.75,9.75)--(11.25,9.25);
   \draw[->,dotted](10.75,8.25)--(11.25,8.75);
   \draw[->,dotted](11.75,9.25)--(12.25,9.75);
   \draw[->,dotted](11.75,8.75)--(12.25,8.25);
   \node[draw=none] at (9.5,22){$S_{2}$};
   \node[draw=none] at (10.5,23){$P_{1}$};
   \node[draw=none] at (10.5,21){$P_{3}$};
   \node[draw=none] at (11.5,22){$I_{2}$};
   \node[draw=none] at (12.5,23){$S_{3}$};
   \node[draw=none] at (12.5,21){$S_{1}$};
   \draw[->,dotted](9.75,22.25)--(10.25,22.75);
   \draw[->,dotted](9.75,21.75)--(10.25,21.25);
   \draw[<-,dotted](10.75,22.75)--(11.25,22.25);
   \draw[->,dotted](10.75,21.25)--(11.25,21.75);
   \draw[->,dotted](11.75,22.25)--(12.25,22.75);
   \draw[->,dotted](11.75,21.75)--(12.25,21.25);
   \draw[<-](9.85,22)--(11.15,22);
   
   \node[draw=none] at (22.5,9){$P_{3}$};
   \node[draw=none] at (23.5,10){$S_{3}$};
   \node[draw=none] at (23.5,8){$S_{2}$};
   \node[draw=none] at (24.5,9){$P_{1}$};
   \node[draw=none] at (25.5,8){$S_{1}$};
   \draw[->,dotted](22.75,9.25)--(23.25,9.75);
   \draw[<-,dotted](22.75,8.75)--(23.25,8.25);
   \draw[->](23.75,9.75)--(24.25,9.25);
   \draw[->,dotted](23.75,8.25)--(24.25,8.75);
   \draw[->,dotted](24.75,8.75)--(25.25,8.25);
   \node[draw=none] at (0.5,2){$P_{1}$};
   \node[draw=none] at (1.5,3){$S_{1}$};
   \node[draw=none] at (1.5,1){$S_{2}$};
   \node[draw=none] at (2.5,2){$P_{3}$};
   \node[draw=none] at (3.5,1){$S_{3}$};
   \draw[->,dotted](0.75,2.25)--(1.25,2.75);
   \draw[<-,dotted](0.75,1.75)--(1.25,1.25);
   \draw[->](1.75,2.75)--(2.25,2.25);
   \draw[->,dotted](1.75,1.25)--(2.25,1.75);
   \draw[->,dotted](2.75,1.75)--(3.25,1.25);
   \node[draw=none] at (22.5,22){$S_{1}$};
   \node[draw=none] at (24,22){$S_{2}$};
   \node[draw=none] at (25.5,22){$S_{3}$};
   \node[draw=none] at (23.25,21){$P_{1}$};
   \draw[<-,dotted,thick](22.4,21.6)--(22.85,21);
   \draw[<-,dotted,thick](23.65,21)--(24.1,21.6);
   \draw[<-](24.45,22)--(25.15,22);
   \draw[<-](23.95,21)--(25.15,21.9);
   \node[draw=none] at (13.5,3){$S_{1}$};
   \node[draw=none] at (15,0.75){$S_{2}$};
   \node[draw=none] at (16.5,3){$S_{3}$};
   \node[draw=none] at (14.25,2){$P_{1}$};
   \node[draw=none] at (15.75,2){$P_{3}$};
   \draw[<-,dotted](13.5,2.5)--(13.75,2);
   \draw[<-,dotted](14.25,1.75)--(14.65,0.75);
   \draw[->,dotted](16.25,2)--(16.5,2.5);
   \draw[->](13.75,3)--(15.35,2.25); 
   \draw[->](16.25,3)--(14.65,2.25); 
   \draw[->,dotted](15.35,0.75)--(15.75,1.75); 
   \node[draw=none] at (0.5,15){$S_{1}$};
   \node[draw=none] at (2,15){$S_{2}$};
   \node[draw=none] at (3.5,15){$S_{3}$};
   \node[draw=none] at (2.75,14){$P_{3}$};
   \draw[->,dotted](3.15,14)--(3.5,14.5);
   \draw[<-,dotted](2.45,14)--(2,14.5);
   \draw[->](0.85,15)--(1.65,15);
   \draw[->](0.85,15)--(2.05,14);
   \node[draw=none] at (13.5,15){$S_{1}$};
   \node[draw=none] at (15,15){$S_{2}$};
   \node[draw=none] at (16.5,15){$S_{3}$};
   \draw[->](13.85,15)--(14.65,15);
   \draw[<-](15.35,15)--(16.15,15);
  
\end{tikzpicture}
\end{figure}  

\end{center} 

We can see that the path of matrix reductions discussed in section \ref{subsection:matrix reduction}
corresponds to the chain of exact structures 
$$\E_{max} \; \supset \; \E_{1,3,5} \; \supset \; \E_{3} \; \supset \; \E_{min} .$$
In fact, the ad hoc notion of a quiver of a matrix problem, given by the algebra operating on the current reduced form, together with arrows of degree one corresponding to the unreduced blocks, can finally be made precise: the reduction of exact structures transforms the Gabriel quiver $Q(\A,\E_{max})$ into the Auslander-Reiten quiver $Q(\A,\E_{min })$, and, in the first reduction step, the quiver of the exact category $(\A,\E_{1,3,5})$ coincides with the quiver depicted after the first reduction step in \ref{subsection:matrix reduction}. 
We only need to make precise why the exact category $(\A,\E_{1,3,5})$ corresponds to reducing the block $A$ of the matrix problem $M=[A|B]$. This is done in the next section \ref{subsection:new exact structure}.

\subsection{Constructing new exact structures from given ones}\label{subsection:new exact structure}

One method to produce exact structures is using exact functors, see also \cite[section 1.4]{DRSS}:
\begin{definition}\label{reduced-structure}
Let $(\mathcal{A}, \E_{\A})$ and $(\mathcal{B},\E_{\B})$ be exact categories and let $F:(\mathcal{A}, \E_{\A})\to (\mathcal{B},\E_{\B})$ be an exact functor, that is, the image $(Fi,Fd)$ of each exact pair $(i,d)$ in $(\A,\E_\A)$ is exact in $(\B,\E_\B)$. 
We define the following subclass of $\E_\A$:
\[\E_{F}= \{ \xi \in \E_\A \;:\; 0\rightarrow A\rightarrow B \rightarrow C\rightarrow 0 \; |\; F(\xi) \mbox{ is  split exact in } \B \} \; \]
\end{definition}
The following is a reformulation in our context of \cite[Lemma 1.9]{DRSS}, and it also follows from  \cite[7.3]{He}, see \cite[Exercise 5.3]{Bu}.
\begin{proposition}\label{exact reduction}
Let $F:(\mathcal{A}, \E_{\A})\to (\mathcal{B},\E_{\B})$ be an exact functor. Then
$(\mathcal{A}, \E_{F})$ is an exact category.
\end{proposition}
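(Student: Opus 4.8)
The plan is to check directly that $\E_F$ is closed under isomorphisms and satisfies the six Quillen axioms, exploiting throughout that $\E_F$ sits inside the exact structure $\E_\A$, so that existence of kernels, cokernels, push-outs and pull-backs is already granted and the only real issue is to trace these constructions through $F$. First I record two preliminary observations. (a) $\E_F$ is closed under isomorphisms of complexes: if $\xi\cong\xi'$ in $\mathrm{Ch}(\A)$ and $\xi\in\E_F$, then $\xi'\in\E_\A$ because $\E_\A$ is iso-closed, and $F\xi'\cong F\xi$ is split exact because the class of split exact sequences of $\B$ is iso-closed. (b) $\E_{min}\subseteq\E_F$: an exact functor is in particular additive, hence sends the split exact sequence $A\to A\oplus B\to B$ to a split exact sequence of $\B$, and with (a) it follows that every split sequence of $\A$ lies in $\E_F$; in particular (E0) and (E0)$^{op}$ hold for $\E_F$, since the identities $1_A$ are already admissible monic and admissible epic for $\E_{min}$. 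Finally, since $F^{op}\colon(\A^{op},\E_\A^{op})\to(\B^{op},\E_\B^{op})$ is again exact, $(\E_F)^{op}=\E_{F^{op}}$, and a sequence is split exact precisely when its opposite is, the axioms (E1)$^{op}$ and (E2)$^{op}$ will follow from (E1) and (E2) by duality. So it remains to verify (E1) and (E2).

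For (E1), let $A\xrightarrow{i}B$ and $B\xrightarrow{j}C$ be admissible monics for $\E_F$. Since $\E_\A$ is an exact structure, $ji$ is an admissible monic for $\E_\A$ and fits into a short exact sequence $A\xrightarrow{ji}C\twoheadrightarrow C''$ of $\E_\A$, whose image under the exact functor $F$ lies in $\E_\B$. By hypothesis $Fi$ and $Fj$ admit retractions $p\colon FB\to FA$ and $q\colon FC\to FB$; then $pq$ retracts $F(ji)=Fj\,Fi$, since $pq\,Fj\,Fi=p(q\,Fj)Fi=p\,Fi=1_{FA}$. In any exact category a short exact sequence whose inflation admits a retraction is split exact: if $r$ retracts the inflation $u$ of $X\xrightarrow{u}Y\xrightarrow{v}Z$, then $v(1_Y-ur)=v$ and $(1_Y-ur)u=0$, so $1_Y-ur=sv$ for a unique $s$, and $vs=1_Z$ as $v$ is epic. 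Hence $F$ of the sequence containing $ji$ is split exact, that is, $ji$ is an admissible monic for $\E_F$.

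For (E2), let $i\colon A\rightarrowtail B$ be an admissible monic for $\E_F$ with cokernel $d\colon B\twoheadrightarrow D$, and let $t\colon A\to C$ be arbitrary. The push-out exists in $\A$ and yields an admissible monic $s_C\colon C\rightarrowtail S$ for $\E_\A$, together with maps $s_B\colon B\to S$ and $d'\colon S\twoheadrightarrow D$ forming the induced morphism of short exact sequences, with $1_D$ on the cokernel term and $d'\,s_B=d$. By the description of push-outs in an exact category (see \cite[Prop.~2.12]{Bu}), there is a short exact sequence $A\rightarrowtail B\oplus C\twoheadrightarrow S$ of $\E_\A$ whose inflation is built from $i$ and $t$ and whose deflation is built from $s_B$ and $s_C$; applying the exact functor $F$ produces a short exact sequence of the same shape in $\E_\B$, which is exactly to say that the image under $F$ of the push-out square is again a push-out square in $\B$. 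Hence the bottom row $FC\xrightarrow{Fs_C}FS\xrightarrow{Fd'}FD$ is the push-out along $Ft$ of the top row $FA\xrightarrow{Fi}FB\xrightarrow{Fd}FD$, and the latter is split exact because $i\in\E_F$. Picking a section $\sigma\colon FD\to FB$ of $Fd$, the composite $Fs_B\,\sigma\colon FD\to FS$ is a section of $Fd'$, since $Fd'\,Fs_B=Fd$ and $Fd\,\sigma=1_{FD}$; thus the bottom row is split exact, that is, $s_C$ is an admissible monic for $\E_F$. This establishes (E2), and with the duality noted above completes the verification.

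There is no serious obstacle here; the one step that is not purely formal is the claim that $F$ carries a push-out square of $\A$ to a push-out square of $\B$, and I would handle it not via any exactness of $F$ on general colimits but via the characterization of such push-outs by conflations of the shape $A\to B\oplus C\to S$, which an exact functor visibly preserves. I would also note that the same argument proves slightly more: for every exact substructure $\E_{min}\subseteq\mathcal{D}\subseteq\E_\B$, the class $\{\,\xi\in\E_\A : F\xi\in\mathcal{D}\,\}$ is an exact structure on $\A$, the case $\mathcal{D}=\E_{min}$ being the statement above; this is consistent with it being a reformulation of \cite[Lemma 1.9]{DRSS}.
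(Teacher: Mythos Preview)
Your proof is correct and follows essentially the same route as the paper's: verify the axioms directly, using that $\E_F\subseteq\E_\A$ to borrow existence of push-outs and pull-backs, and then check splitness after applying $F$. The paper handles (E1) by invoking that admissible monics for $\E_{min}(\B)$ compose, while you argue more explicitly via composition of retractions and the lemma that a conflation with a retractable inflation splits; for (E2) both you and the paper use the identity $Fd'\circ Fs_B=Fd$ to produce a section of $Fd'$ from one of $Fd$. Your detour through preservation of the push-out square via the conflation $A\rightarrowtail B\oplus C\twoheadrightarrow S$ is correct but not actually needed for the conclusion, since the section argument only uses the commutativity $d'\,s_B=d$, which you already have from the induced map of conflations. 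Your closing remark about replacing $\E_{min}$ by an arbitrary exact substructure $\mathcal D\subseteq\E_\B$ is a nice observation and indeed matches the generality of \cite[Lemma 1.9]{DRSS}.
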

\begin{proof}We verify that $\E_{F}$ satisfies the axioms of an exact structure on $\A$. Since $F(1_{A})= 1_{FA}$ for every object $A$ of $\mathcal{A}$ and the identity is admissible monic and epic, $\E_{F}$ satisfies (E0) and (E0)$^{op}$.

An admissible monic in $\E_{F}$  is a morphism $i$ in a pair $(i,d)$ in $ \E_\A$ such that $F(i)$ is an admissible monic for the split exact structure $\E_{min}(\B)$ on $\B$. Since $\E_{min}(\B)$ is closed under composition of admissible monics  we conclude that $\E_{F}$ satisfies (E1).  The dual argument applies to admissible epics.

Now let us verify that $\E_{F}$ satisfies (E2) and (E2)$^{op}$:
 The push-out of an admissible monic $i: A \mono B$ in  $\E_{F}$ along an arbitrary morphism $f: A \to X$  exists in the exact category $(\A,\E_\A)$: 
\[\xymatrix{
A \; \ar[d]_{f} 
\ar@{ >->}[r]^{i}  \ar@{}[dr]|{\text{PO}} 
& B\ar[d]^{g}\\
X \; \ar@{->}[r]^{j} & D}
\]
We need to verify that $j$ is an admissible monic not only for $\E_\A$, but also for $\E_F$.
Consider the commutative diagram in $(\A,\E_\A)$
\[
\xymatrix{
A \ar@{ ->}[r]^i
\ar@{->}[d]_{f}
& B \ar@{->}[r]^d\ar@{->}[d]^g & C\ar@{=}[d] && \xi \\
X\ar@{->}[r]^{j} & D\ar@{->}[r]^{d'} & C  && {\xi}' = f \cdot \xi }
\]
which is mapped under the functor $F$ to the commutative diagram in $\B$
\[
\xymatrix{
F(A) \ar@{ ->}[r]^{F(i)}
\ar@{->}[d]_{F(f)}
& F(B) \ar@{->}[r]^{F(d)}\ar@{->}[d]^{F(g)} & F(C)\ar@{=}[d] && F(\xi) \in \E_{min}(\B)\\
F(X)\ar@{->}[r]^{F(j)} & F(D)\ar@{->}[r]^{F(d')} & F(C)  && F(\xi') }
\]

Since $i$ is an admissible monic, we know that $F(i)$ is a section and $F(d)$ a retraction. 
By commutativity, $F(d')F(g) = F(d)$ is a retraction, so $F(d')$ is a retraction. 
Since $F$ is exact, the pair $(F(j),F(d'))$ is exact, hence $F(j)$ is a section and (E2) holds.

The dual argument applied to the pull-back diagram of an admissible epic yields  (E2)$^{op}$.
\end{proof}

The basic idea of matrix reduction is to fix a subproblem and completely reduce representations of this subproblem into direct sums of indecomposables. On the level of exact structures, having nothing but direct sums of indecomposables corresponds to the choice of the split exact structure. 
Thus, if the functor $F$ in definition \ref{reduced-structure} is the projection onto a suitable subcategory (like representations of a subquiver or modules over a subalgebra), the definition of the exact structure $\E_F$ corresponds to the idea that objects in the subcategory are completely reduced into sums of indecomposables (we consider those exact sequences $\xi$ whose projection  $F(\xi)$  is  split exact).
\medskip

In  \cite[section 4]{DRSS}, several classical reductions are discussed, like one-point extension of an algebra or reduction of modules to a vector space problem. The underlying procedure is always the same:  complete reduction on a subproblem corresponds to the choice of an exact structure on the original problem, composed of those short exact sequences that split when restricted to the subproblem. We refer to \cite{DRSS} for more details. 
However the examples discussed there consider only {\em one} choice of exact structure on the original category $\mod A$. 
We propose to iterate this process (as it is done for matrix reductions or for Roiter's bocses) that is, to consider a chain of exact structures on the same underlying category $\A$.
\bigskip

We return now to the example \ref{cube}, 
 the category $\A = \rep Q$ of representations of  the quiver
\[ Q : \qquad \xymatrix{ 1 \ar[r]^{\alpha}  & 2  & 3 \ar[l]_{\beta}}\]

Consider $\A$ equipped with the abelian exact structure $\E_{max}$, and set up the first reduction step: Let $\B= \rep Q'$ be the category of representations of the subquiver 
\[ Q' : \qquad \xymatrix{ 1 \ar[r]^{\alpha}  & 2  }\]
of $Q$, and let 
$$F : \rep Q \to \rep Q'$$
be the restriction functor. 
Thus the exact structure $\E_F$ on $\A$ is given by all
short exact sequences $0\rightarrow U\rightarrow V\rightarrow W\rightarrow 0$  in $\rep Q$ whose restriction to the subquiver $Q'$ is split. 
It is not difficult to verify that exactly the non-split short exact sequences numerated 1, 3 and 5 from the table in example \ref{cube} are those whose restriction to $Q'$ splits. We therefore  conclude that 
$$\E_F = \E_{1,3,5}.$$
The matrix reduction step (1) in section \ref{subsection:matrix reduction}
was to reduce the matrix $A$. In view of the theory developed by now, this corresponds to choosing the exact structure which splits on the subquiver supported by the arrow $\alpha$, that is, the quiver $Q'$. Therefore, the reduction step (1) corresponds precisely to the reduction of exact structures
$$ \E_{max} \to \E_{1,3,5}$$
on $\A =\rep Q$.

\section{The lattice of exact structures of an additive category}\label{lattice}

\begin{definition}Let $\A$ be an additive category. We denote by $(Ex({\A}), \subseteq)$ the poset of exact structures $\E$ on $\A$, where the partial order is given by containment $\E' \subseteq \E$.
\end{definition}
This \emph{containment} partial order is the \emph{reduction of exact structures} discussed in \ref{def reduction}.
\begin{lemma}\label{intersection} For a family of exact structures  $(\E_{\omega})_{\omega \in \Omega}$ on an additive category $\A$, the intersection
\[\bigcap_{\omega \in \Omega} \E_{\omega}  = \{ \xi| \xi \in \E_{\omega} \mbox{ for all } \omega \in \Omega\}\]
forms an exact structure on $\A$.
\end{lemma}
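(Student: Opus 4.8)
The plan is to verify directly that $\mathcal{E} := \bigcap_{\omega \in \Omega} \E_\omega$ satisfies each of the axioms (E0), (E0)$^{op}$, (E1), (E1)$^{op}$, (E2), (E2)$^{op}$, plus closure under isomorphism. The guiding principle is that each axiom is a statement of the form ``if certain things lie in the class, then a prescribed diagram can be completed with a morphism that also lies in the class''; since membership in $\mathcal{E}$ just means membership in \emph{every} $\E_\omega$, one runs the axiom for each $\omega$ separately and then intersects the conclusions. The only subtlety, and the step I expect to require the most care, is that (E2) and (E2)$^{op}$ assert the \emph{existence} of a pushout (resp.\ pullback), so one must check that the pushout produced inside one $\E_\omega$ is ``the same'' as the one produced inside another $\E_{\omega'}$ — this is handled by the universal property, since pushouts are unique up to unique isomorphism in $\A$, independently of the exact structure.

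First I would note that $\mathcal{E}$ is a class of kernel-cokernel pairs (each $\E_\omega$ consists of such pairs, so the intersection does too), and it is closed under isomorphism since each $\E_\omega$ is. For (E0): given any object $A$, the pair $(1_A, 0\colon A \to 0)$ lies in every $\E_\omega$ because each satisfies (E0) — in fact, by Proposition~\ref{split}, every $\E_\omega$ contains all split exact sequences, so this is immediate; hence it lies in $\mathcal{E}$, and $1_A$ is an admissible monic for $\mathcal{E}$. The argument for (E0)$^{op}$ is dual. For (E1): if $i\colon A \mono B$ and $j\colon B \mono C$ are admissible monics for $\mathcal{E}$, then for each $\omega$ they are admissible monics for $\E_\omega$, so by (E1) in $\E_\omega$ the composite $j\circ i$ is an admissible monic for $\E_\omega$; thus $j \circ i$ is an admissible monic for every $\E_\omega$, hence for $\mathcal{E}$. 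Here one should observe that ``$i$ is an admissible monic for $\mathcal{E}$'' means there is a single $d$ with $(i,d) \in \mathcal{E}$, i.e.\ with $(i,d)\in \E_\omega$ for all $\omega$; and the deflation $d$ witnessing admissibility of $i$ is determined up to isomorphism by $i$ (it is a cokernel of $i$ in $\A$), so there is no ambiguity in speaking of ``the'' short exact sequence with inflation $i$. The argument for (E1)$^{op}$ is dual.

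For (E2): let $i\colon A \mono B$ be an admissible monic for $\mathcal{E}$ and $t\colon A \to C$ an arbitrary morphism. For each $\omega$, axiom (E2) in $\E_\omega$ guarantees that the pushout of $i$ along $t$ exists and that the induced map $s_C$ is an admissible monic for $\E_\omega$. The pushout square is an absolute notion in $\A$ (it is a colimit), so all these $\E_\omega$ produce the same square $A \to B \to S \leftarrow C$ up to canonical isomorphism — pick one representative $S$. Then $s_C$ is an admissible monic for $\E_\omega$ for every $\omega$, so the short exact sequence $C \mono S \epi B/i(A)$ (whatever its cokernel is) lies in each $\E_\omega$, hence in $\mathcal{E}$; therefore $s_C$ is an admissible monic for $\mathcal{E}$, and (E2) holds. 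The dual argument with pullbacks gives (E2)$^{op}$. This completes the verification that $\mathcal{E} = \bigcap_{\omega \in \Omega}\E_\omega$ is an exact structure on $\A$.
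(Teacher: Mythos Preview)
Your proof is correct and follows essentially the same approach as the paper's own proof: verify (E0), (E0)$^{op}$, (E1), (E1)$^{op}$ by appealing to each $\E_\omega$ separately, and handle (E2), (E2)$^{op}$ by noting that the pushout (resp.\ pullback) is unique up to isomorphism in $\A$ and that exact structures are closed under isomorphism. Your write-up is simply more detailed, in particular your explicit remark that the cokernel witnessing admissibility of a monic is determined up to isomorphism, which the paper leaves implicit.
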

\begin{proof} Let us show that this class verifies the axioms of the definition \ref{Quillen def}: 
(E0), (E0)$^{op}$, (E1) and (E1)$^{op}$ are satisfied since every $\E_{\omega}$ satisfies these axioms. 
For (E2), the push-out of an admissible monic $i$ in $\E_{\omega} $ exists in $\E_{\omega}$ and yields an admissible monic $f_i$ in $\E_{\omega}$ for all $\omega \in \Omega$:
\[\xymatrix{
A \; \ar[d]_{j} 
\ar@{ >->}[r]^{i}  \ar@{}[dr]|{\text{PO}} 
& B\ar[d]^{g}\\
C \; \ar@{>->}[r]^{f_i} & D}
\]
Since the push-out is unique up to isomorphism, and an exact structure is closed under isomorphisms, we conclude that (E2)  satisfied. Dually for (E2)$^{op}$.
\end{proof}

\begin{theorem}The poset of exact structures of an additive category $\A$ is a lattice $(Ex({\A}), \subseteq, \bigwedge, \bigvee)$.
\end{theorem}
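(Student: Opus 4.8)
The plan is to show that $(Ex(\A), \subseteq)$ is a complete lattice, which in particular makes it a lattice. The key observation is Lemma \ref{intersection}: an arbitrary intersection of exact structures is again an exact structure. This is exactly the condition needed to invoke the standard fact that a poset in which every subset has an infimum, and which has a largest element, is a complete lattice. So first I would define, for any family $(\E_\omega)_{\omega\in\Omega}$ of exact structures on $\A$, the meet as $\bigwedge_{\omega\in\Omega}\E_\omega := \bigcap_{\omega\in\Omega}\E_\omega$; Lemma \ref{intersection} guarantees this is again an exact structure, and it is clearly the greatest lower bound in $(Ex(\A),\subseteq)$. In the finite (in fact binary) case this already gives the meet operation $\E\wedge\E' = \E\cap\E'$ required for a lattice.

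Next I would produce the join. The clean way is the usual "infimum of upper bounds" trick: given a family $(\E_\omega)_{\omega\in\Omega}$, consider the collection $\mathcal{U}$ of all exact structures on $\A$ containing every $\E_\omega$. This collection is nonempty because $\E_{max}$ (Theorem \ref{Emax}) is such an exact structure — every exact structure on $\A$ is contained in $\E_{max}$. Then set
\[
\bigvee_{\omega\in\Omega}\E_\omega := \bigcap_{\F\in\mathcal{U}}\F,
\]
which is an exact structure by Lemma \ref{intersection}, contains each $\E_\omega$, and is contained in every exact structure that contains all the $\E_\omega$; hence it is the least upper bound. In particular the binary join $\E\vee\E'$ exists, completing the verification that $(Ex(\A),\subseteq,\bigwedge,\bigvee)$ is a (complete) lattice. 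The bounds are $\E_{min}$ at the bottom (Proposition \ref{split}) and $\E_{max}$ at the top (Theorem \ref{Emax}), so the lattice is bounded as well.

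There is no serious obstacle here; the proof is essentially a packaging of Lemma \ref{intersection} together with the existence of $\E_{max}$. The one point that deserves a sentence of care is that the join is genuinely the \emph{sup} in the poset, i.e. that it is \emph{contained} in every upper bound — this is immediate from the definition as an intersection over \emph{all} upper bounds, but it is worth noting that without $\E_{max}$ (or at least without knowing $Ex(\A)$ has a top element) the family $\mathcal{U}$ could a priori be empty; Theorem \ref{Emax} is precisely what rules this out. I would also remark that, unlike the meet, the join of $\E$ and $\E'$ need not consist simply of the short exact sequences lying in $\E$ or in $\E'$ — the class $\E\cup\E'$ is in general not closed under the axioms — which is why the intersection-of-upper-bounds description is the right one and why an explicit description of $\bigvee$ is less transparent than that of $\bigwedge$.
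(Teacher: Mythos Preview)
Your proposal is correct and follows essentially the same route as the paper: define the meet as the intersection (using Lemma \ref{intersection}) and the join as the intersection of all exact structures containing the given ones, with nonemptiness guaranteed by $\E_{max}$ (Theorem \ref{Emax}). The only difference is that you fold completeness and boundedness into the same argument, whereas the paper records those as a separate corollary.
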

\begin{proof}
Using lemma \ref{intersection} we define the following two binary operations on the poset $Ex({\A})$;
\emph{the meet} $\bigwedge$ is defined by $ \E_{\omega}\bigwedge \E_{\omega'} = \E_{\omega} \cap \E_{\omega'} $, 
and \emph{the join} $\bigvee$ is defined by
$$ \E_{\omega}\bigvee \E_{\omega'} = \bigcap \{ \E \in Ex(\A) \;|\; \E_{\omega}\subseteq \E, \E_{\omega'}\subseteq \E\}.$$ 
Note that the intersection defining the join is not an empty set since it contains the maximal exact structure $\E_{max}$ of the additive category $\A$; see \ref{Emax} for the existence of $\E_{max}$.
We conclude that  the poset $Ex({\A})$ is a lattice since it is a $\bigwedge$-semilattice and a $\bigvee$-semilattice. 

\end{proof}

\begin{cor}The lattice $(Ex({\A}), \subseteq, \bigwedge, \bigvee)$ of exact structures of an additive category is bounded and complete.
\end{cor}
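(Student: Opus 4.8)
The statement to prove is the corollary: the lattice $(Ex(\A), \subseteq, \bigwedge, \bigvee)$ is bounded and complete.

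The plan is straightforward given everything established earlier in the excerpt. First I would address boundedness: a lattice is bounded if it has a least element and a greatest element. The least element is $\E_{min}$, the split exact structure, which by Proposition \ref{split} (citing \cite[Lemma 2.7]{Bu}) is contained in every exact structure on $\A$, hence is the minimum of $(Ex(\A), \subseteq)$. The greatest element is $\E_{max}$, whose existence and uniqueness is guaranteed by Theorem \ref{Emax} (citing \cite[Corollary 2]{Ru11}); by definition of being maximal it contains every exact structure on $\A$, so it is the maximum. This gives boundedness immediately with essentially no work.

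For completeness I would argue that $(Ex(\A), \subseteq)$ admits arbitrary meets and joins, not merely binary ones. Given any family $(\E_\omega)_{\omega\in\Omega}$ of exact structures, Lemma \ref{intersection} shows that $\bigcap_{\omega\in\Omega}\E_\omega$ is again an exact structure; since intersection is the greatest lower bound with respect to containment, this is the meet of the family. For the join, I would mimic the construction in the proof of the lattice theorem: set
$$\bigvee_{\omega\in\Omega}\E_\omega = \bigcap\{\E\in Ex(\A)\;:\;\E_\omega\subseteq\E\text{ for all }\omega\in\Omega\}.$$
This intersection is over a nonempty collection because $\E_{max}$ always belongs to it (again by Theorem \ref{Emax}), and by Lemma \ref{intersection} the result is an exact structure; one checks routinely that it is the least upper bound of the family. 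A complete lattice is precisely one in which every subset has a meet and a join, so this establishes completeness. Note also that completeness of a lattice already forces the existence of a least element (the meet of the whole poset, equivalently the join of the empty set) and a greatest element (the join of the whole poset), so boundedness is in fact a consequence of completeness; but it is cleaner to point explicitly to $\E_{min}$ and $\E_{max}$.

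There is no real obstacle here; the only point requiring a moment of care is that the defining intersection for the arbitrary join is nonempty, which is exactly where the nontrivial input — the existence of a maximal exact structure $\E_{max}$ from \cite{Ru11} — enters. Everything else is a direct unwinding of the definitions of meet, join, bounded, and complete, together with the already-proved Lemma \ref{intersection}.
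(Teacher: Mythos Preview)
Your proposal is correct and follows essentially the same route as the paper: boundedness via $\E_{min}$ and $\E_{max}$, completeness via Lemma~\ref{intersection} for arbitrary meets and the intersection-of-upper-bounds construction for arbitrary joins (with $\E_{max}$ ensuring the latter is nonempty). Your write-up is slightly more detailed than the paper's, but the argument is the same.
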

\begin{proof}The lattice is bounded since it has a \emph{top} $\E_{max}$ and a \emph{bottom} $\E_{min}$ verifying 
$$\E \bigwedge \E_{max}=\E \mbox{ and } \E \bigvee \E_{min}= \E$$ for any exact structure $\E$ in $Ex({\A})$. And it is complete since all subsets $\{(\E_{\omega})_{\omega\in\Omega}\}$ of $Ex({\A})$ have both a \emph{meet} $\bigwedge (\E_{\omega})_{\omega\in \Omega} =\cap (\E_{\omega})_{\omega\in \Omega}$
and a \emph{join} defined by $\bigvee(\E_{\omega})_{\omega\in \Omega}= \bigcap\{\E | \E_{\omega}\subseteq \E, \forall \omega\in \Omega\}$, by lemma \ref{intersection}.
\end{proof}

\begin{example}
As seen in example \ref{triangulated}, if $\A$ is a triangulated category, then the lattice of exact structures is a single point: $Ex(\A)=\{\E_{min}\}$.
\end{example}

\begin{example}\label{A3}
Consider the category $\A = \rep Q$ of representations of  the quiver
\[ Q : \qquad \xymatrix{ 1 \ar[r]^{\alpha}  & 2  & 3 \ar[l]_{\beta}}\]
then the lattice of exact structures $Ex(\A)$ is the \emph{cube}  we construct in the example \ref{cube}.
Let us mention that by taking other forms of the quiver of type $A_{3}$ such as
\[ Q : \qquad \xymatrix{ 1   & 2 \ar[l]_{\alpha} \ar[r]^{\beta} & 3 }\]
or 
\[ Q : \qquad \xymatrix{ 1 \ar[r]^{\alpha}  & 2 \ar[r]^{\beta} & 3 }\]
we get a similar \emph{cube} (that is, a Boolean lattice) for $Ex(\A)$.
\label{cubeA3}\\
 In general, if we have $n$ Auslander-Reiten sequences then we have exactly $2^n$ exact structures which is the power set cardinality of an $n$ elements set.
In fact, the lattice is Boolean for a large class of exact categories:
\end{example}

\color{black}
\begin{theorem}\label{Grenouille verte}\cite{Enomoto} Let $\A$ be a skeletally small, Hom-finite, idempotent complete additive category which has finitely many indecomposable objects up to isomorphism. Then the lattice of exact structures $Ex(\A)$ is Boolean.

In fact, the set of exact structures on $\A$ is in bijection with the power set of Auslander-Reiten sequences in $\A$.
\end{theorem}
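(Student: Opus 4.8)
The plan is to exhibit a bijection between $Ex(\A)$ and the power set $\mathcal{P}(\mathrm{AR})$ of the (finite) set of Auslander-Reiten sequences in $\A$, and to check that this bijection is an isomorphism of posets; since the power set of a finite set is a Boolean lattice, this suffices. First I would recall the structure theory of exact categories over a Krull-Schmidt category: viewing an exact structure $\E$ as an additive subcategory of $\mathrm{Ch}(\A)$ closed under the operations that $\E$ must be closed under, and using that $\A$ is Hom-finite, idempotent complete with finitely many indecomposables, every $\E$ is determined by the finite set of isomorphism classes of indecomposable (non-split) short exact sequences it contains. So the map $\E \mapsto \{\text{indecomposable non-split sequences in } \E\}$ is injective, and it is order-preserving and order-reflecting by construction.

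The crux is to identify which subsets of indecomposable non-split sequences arise this way, and to show they are exactly the subsets that are ``generated'' by the AR-sequences they contain. For this I would use the key input of Enomoto's work: by the Auslander-Reiten formula / the theory of defects, closure under pushout along arbitrary morphisms and pullback forces that whenever $\E$ contains \emph{any} non-split short exact sequence ending (resp. starting) at an indecomposable $Z$, it must contain the AR-sequence ending at $Z$ — the AR-sequence is minimal among non-split sequences with fixed endterm, and every other non-split sequence with endterm $Z$ is obtained from it by pushout/pullback. Conversely, starting from any subset $\mathcal{S}$ of AR-sequences, one takes the smallest class closed under isomorphism, direct sums, and the axioms (E0)--(E2)$^{op}$ containing $\mathcal{S}$; one must verify this class contains no AR-sequence outside $\mathcal{S}$. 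This is where the hypotheses on $\A$ are used essentially: in the Krull-Schmidt setting the closure operations do not ``create'' new AR-sequences, because an AR-sequence $0 \to X \to Y \to Z \to 0$ cannot be a direct summand of any sequence built by pushouts/pullbacks from sequences not already involving the AR-sequence at $Z$ — one argues via the vanishing/non-vanishing of the corresponding defect functor, or equivalently via Enomoto's identification of $Ex(\A)$ with the lattice of ``closed subfunctors'' of $\mathrm{Ext}^1$, which in the finite-type case are precisely arbitrary unions of the simple subfunctors indexed by AR-sequences.

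Having established that $\E \mapsto \{\text{AR-sequences in }\E\}$ is a well-defined bijection $Ex(\A) \to \mathcal{P}(\mathrm{AR})$ with order-preserving inverse given by $\mathcal{S} \mapsto \langle \mathcal{S}\rangle$ (the exact structure generated by $\mathcal{S}$), I would conclude: meets in $Ex(\A)$ correspond to intersections (immediate from Lemma \ref{intersection}, since an AR-sequence lies in $\bigcap \E_\omega$ iff it lies in every $\E_\omega$), and joins correspond to unions (the exact structure generated by $\E_1$ and $\E_2$ contains exactly the AR-sequences of $\E_1$ together with those of $\E_2$, by the no-new-AR-sequences fact). A lattice order-isomorphic to $(\mathcal{P}(T), \subseteq, \cap, \cup)$ for a finite set $T$ is Boolean, with complement of $\E$ corresponding to the AR-sequences not in $\E$; this gives the final assertion that $|Ex(\A)| = 2^{|\mathrm{AR}|}$ as well.

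\textbf{Main obstacle.} I expect the genuinely hard step to be the ``no new AR-sequences'' claim: that the exact structure generated by a set $\mathcal{S}$ of AR-sequences contains no AR-sequence outside $\mathcal{S}$. Verifying the exact-structure axioms for a generated class is routine, and injectivity of the correspondence is formal; but controlling the closure — in particular ruling out that iterated pushouts, pullbacks, and Baer sums of sequences in $\langle\mathcal{S}\rangle$ could produce (as a direct summand) an AR-sequence for some $Z$ with no sequence over $Z$ in $\mathcal{S}$ — requires the full force of Auslander-Reiten theory (minimality of AR-sequences, the defect functor, or Enomoto's classification of closed subbifunctors). This is precisely the content imported from \cite{Enomoto}, so in the write-up I would cite it rather than reprove it.
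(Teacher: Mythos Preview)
Your proposal is correct and takes essentially the same approach as the paper: both defer the substantive content to Enomoto's classification. The paper's proof is in fact a single sentence citing \cite{Enomoto} (specifically 2.7, 3.1, 3.7, 3.10 there), while you give a fuller sketch of why the bijection with $\mathcal{P}(\mathrm{AR})$ works and correctly identify the ``no new AR-sequences'' step as the non-formal input that must be imported from \cite{Enomoto}; so your write-up is strictly more informative but rests on the same citation.
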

\begin{proof} This follows directly from 2.7 (see also 3.1, 3.7 and 3.10) in the work of Enomoto \cite{Enomoto}.
\end{proof}

\section{Length function on the poset $Obj\A$}\label{section 6}


The aim of this section is to define and study the notion of length for objects of an exact category $(\A, \E)$. Contrary to abelian categories, the Jordan-H\"older property does not hold for general exact(additive) categories, which makes it impossible to define length using composition series.

Throughout this section, we assume that $\A$ is essentially small, and we denote by $Obj\A$ the set of ismorphism classes of objects in $\A$. 
We show that the notion of $\E-$subobjects allows to turn $Obj\A$ into a poset, and that the length of an object corresponds to the height function of this poset.
Since the exact structure $\E$ is closed under isomorphisms, we work mostly with objects $X$ rather than their isomorphism classes $[X] \in Obj\A$.

\subsection{The length function}

\begin{definition}\label{lE}
We define the $\mathcal{E}-$length function $l_{\mathcal{E}}: Obj\mathcal{A}\rightarrow \mN \cup \{ \infty\}$ as supremum over the lengths of  chains of admissible monics which are not isomorphisms. 

That is, for an object $X$ of $(\A,\E)$, one has $l_{\mathcal{E}}(X) = n \in \mN$ if $n$ is the  maximal length of a chain of admissible monics which are not isomorphisms 

\[ 0=X_0 \; \mono X_1 \;\mono  \cdots \; \mono \;  X_{n-1}\;\mono\; X_n=X .\] 
We say in this case that $X$ has finite $\E-$length, or that $X$ is $\E-$finite. 
If no such bound exists, we say that $X$ has infinite length, or $l_\E (X) = \infty.$
Clearly, isomorphic objects have the same length, and therefore this definition gives rise to a function $l_{\mathcal{E}}: Obj\mathcal{A}\rightarrow \mN \cup \{ \infty\}$ defined on isomorphism classes.
\end{definition}

\begin{remark} The $\mathcal{E}-$simple objects are  precisely those of length $l_{\E}(X)=1$.
\end{remark}

\begin{example}\label{length ex} We illustrate how the $\E-$length of an object changes with the exact structure by considering the indecomposable injective representation $I_2$ from the example discussed in \ref{cube}, and measure its length with respect to various exact structures from $Ex(\A)$, see \ref{cube}:
 \[l_{\mathcal{E}_{min}}(I_2)= 1 \]
 \[ l_{\mathcal{E}_{1,3,5}}(I_2)=2 \] 
 \[ l_{\mathcal{E}_{ab}}(I_2)=3.\]
\end{example}
\bigskip

We call an exact category $(\A, \E)$ {\em finite} if every object is $\E-$finite. 
This implies the condition that $\A$ is an $\E-$Artinian and $\E-$Noetherian category in the following sense:

\begin{definition}
An object $X$ of $(\A, \E)$ is $\E-$Noetherian if any increasing sequence of $\E-$subobjects of $X$

\[X_1 \;\mono X_2  \; \mono \cdots \mono \;  X_{n-1}\;\mono\; X_n\mono X_n \; \cdots \]
becomes stationary.
Dually, an object $X$ of $(\A, \E)$ is $\E-$Artinian if any descending sequence of $\E-$subobjects of $X$

\[\cdots \; X_n \mono X_n \;  \mono X_{n-1}\;\mono  \cdots \; \mono X_2 \mono \;  X_1\;\] 
becomes stationary.
The exact category $(\A, \E)$ is called $\E-$Artinian 
(respectively $\E-$Noetherian) if every object is $\E-$Artinian (respectively \mbox{$\E-$Noetherian)}.
\end{definition}

\begin{lemma} Let $(\A, \E)$ be an exact category. An $\E-$finite object $X$ of $(\A, \E)$ is $\E-$Artinian and $\E-$Noetherian.
\end{lemma}

\begin{proof}
For an $\E-$finite object $X$ of length $l_{\E}(X)=n \in \N$, the longest chain of proper admissible monics is of length $n$. Thus any increasing or decreasing sequence of $\E-$subobjects of $X$ must become stationary and $X$ is $\E-$Artinian and $\E-$Noetherian.
\end{proof}
For the opposite direction, we have the following result:
\begin{lemma}Let $(\A, \E)$ be an exact category. An $\E-$Artinian and $\E-$Noetherian object $X$ of $(\A, \E)$ admit an $\E-$composition series.

\end{lemma}
\begin{proof}
The detailed proof appeared in \cite[Lemma 7.5]{BHT}.
\end{proof}
We now study how the length function behaves with respect to short exact sequences: It turns out to be a superadditive function. We provide in \ref{nonJH} an example that it need not be additive in general.
\begin{theorem}\label{lXYZ}
Let $ X \imono{f} Y \iepi{g} Z $ be a short exact sequence of $\E-$finite objects. Then \[l_{\E}(Y) \ge l_{\E}(X)+l_{\E}(Z).\]
\end{theorem}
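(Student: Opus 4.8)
The plan is to take a maximal chain of proper admissible monics in $X$ and a maximal chain of proper admissible monics in $Z$, and glue them together into a single chain in $Y$ of length $l_\E(X) + l_\E(Z)$, whose existence forces $l_\E(Y) \geq l_\E(X) + l_\E(Z)$. First I would fix $m = l_\E(X)$ and $n = l_\E(Z)$ and choose realizing chains
\[ 0 = X_0 \mono X_1 \mono \cdots \mono X_m = X, \qquad 0 = Z_0 \mono Z_1 \mono \cdots \mono Z_n = Z. \]
Composing each $X_i \mono X$ with $f \colon X \mono Y$ and using (E1) (closure of admissible monics under composition), I get a chain of admissible monics $0 = X_0 \mono X_1 \mono \cdots \mono X_m = X \xmono{f} Y$, i.e.\ admissible monics $X_i \mono Y$; none of the steps $X_{i-1} \mono X_i$ is an isomorphism by choice of the chain, and the final step $X \xmono{f} Y$ is an isomorphism only if $Z = \coker f = 0$, in which case $n = 0$ and there is nothing more to add. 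This yields the bottom part of the desired chain in $Y$, of length $m$, ending at the $\E$-subobject $f(X) \subset_\E Y$ (more precisely at $X$ mapped into $Y$).

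Next I would build the top part by pulling the chain in $Z$ back along $g \colon Y \iepi Z$. For each $i$, form the pullback of $g$ along the admissible monic $Z_i \mono Z$; by axiom (E2)$^{\mathrm{op}}$ the pullback of the admissible epic $g$ along $Z_i \hookrightarrow Z$ is again an admissible epic, say $Y_i \iepi Z_i$, and one checks in the standard way (this is the usual "pullback of a sub-object along a deflation" argument, e.g.\ B\"uhler \cite{Bu}) that $Y_i \mono Y$ is an admissible monic with cokernel $Z/Z_i$, and that $Y_0 = f(X) \cong X$, $Y_n = Y$. Moreover the inclusions assemble: from the universal property of pullbacks, the admissible monic $Z_{i-1} \mono Z_i$ induces an admissible monic $Y_{i-1} \mono Y_i$ (again using (E2)$^{\mathrm{op}}$ to identify $Y_{i-1}$ as the pullback of $Y_i \iepi Z_i$ along $Z_{i-1} \mono Z_i$, then (E1)$^{\mathrm{op}}$/composition), and this monic is not an isomorphism because its cokernel is $Z_i/Z_{i-1} \neq 0$ since $Z_{i-1} \mono Z_i$ is proper. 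Splicing the two chains at $Y_0 \cong X \cong f(X)$ gives
\[ 0 = X_0 \mono \cdots \mono X_m = X \cong Y_0 \mono Y_1 \mono \cdots \mono Y_n = Y, \]
a chain of proper admissible monics in $Y$ of length $m + n$, whence $l_\E(Y) \geq m + n = l_\E(X) + l_\E(Z)$.

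The main obstacle, and the step that needs the most care, is the pullback construction in the second paragraph: verifying that pulling back the admissible epic $g$ along each $Z_i \mono Z$ really produces admissible monics $Y_i \mono Y$ that are *compatible*, i.e.\ that the $Y_i$ genuinely form an increasing chain of $\E$-subobjects of $Y$ with the stated cokernels, and that $Y_0 \cong X$. This is where one has to invoke the standard pullback lemmas for exact categories (the "obscure axiom" and its consequences, as in \cite[Section 2--3]{Bu}) carefully; everything else is just bookkeeping with (E1), (E1)$^{\mathrm{op}}$ and Remark \ref{zero coker} to ensure properness of each step. Note this argument uses $\E$-finiteness of $X$ and $Z$ only to guarantee that $m, n$ are finite; if either is infinite, one produces arbitrarily long chains in $Y$ and the inequality holds trivially with the right-hand side interpreted as $\infty$, so the hypothesis is really just for the clean statement.
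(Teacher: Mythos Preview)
Your proposal is correct and follows essentially the same strategy as the paper: pull back the admissible epic $g$ along the chain in $Z$ to obtain a chain $X \cong Y_0 \mono Y_1 \mono \cdots \mono Y_n = Y$ (the paper does this by iterated pullbacks, citing \cite[Prop.~2.12, 2.15, 3.3]{Bu} for exactly the compatibility and properness issues you flag), and then prepend a maximal chain in $X$ via $f$. The only cosmetic difference is that the paper builds the $Y_i$ step by step from the top down rather than pulling each $Z_i$ back to $Y$ directly, but as you note these are the same objects by the pasting law for pullbacks.
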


\begin{proof} Consider a chain of proper admissible monics which defines the length $s$ of $Z$:
\[ 0=Z_0 \; \imono{i_1} Z_1 \;\mono  \cdots \; \mono \;  Z_{s-1}\;\imono{i_s}\; Z_s=Z. \]
Denote by $Y_{s-1}$ the pull-back of $g$ along $i_s$. 
By the the dual of \cite[Prop 2.12]{Bu}, there exists a commutative diagram with exact columns
$$\xymatrix{
X  \ar@{ >->}[d]_{f_{s-1}}  \ar@{=}[r]  & X \ar@{ >->}[d]^{f}\\
Y_{s-1} \; \ar@{ ->>}[d]_{g_{s-1}}  \ar@{ >->}[r]^{j_{s}}  & Y\ar@{ ->>}[d]^{g}\\
Z_{s-1} \; \ar@{>->}[r]_{i_s} & Z}$$
Since $i_s$ is an admissible monic, \cite[Prop 2.15]{Bu} yields that $j_s$ is one as well, and since $i_s$ is not an isomorphism, $j_s$ cannot be an isomorphism by \cite[3.3]{Bu}.
Iterated pull-backs along the morphisms $g_{s-1}, g_{s-2}, \ldots, g_1$ therefore yield the following exact diagram with exact columns and proper admissible monics $j_1, \ldots j_{s}$:
$$\xymatrix{
X  \ar@{ >->}[d]_{f_{0}}  \ar@{=}[r]  & X \ar@{ >->}[d]_{f_{1}} \ar@{.}[r] & X  \ar@{ >->}[d]_{f_{s-1}}  \ar@{=}[r]  & X \ar@{ >->}[d]^{f}\\
Y_{0} \; \ar@{ ->>}[d]_{g_{0}}  \ar@{ >->}[r]^{j_{1}}  &Y_{1} \ar@{.}[r] \ar@{ ->>}[d]_{g_{1}} & Y_{s-1} \; \ar@{ ->>}[d]_{g_{s-1}}  \ar@{ >->}[r]^{j_{s}}  & Y\ar@{ ->>}[d]^{g}\\
0=Z_0\;\ar@{>->}[r]_{i_1} & Z_1 \ar@{.}[r] & Z_{s-1} \; \ar@{>->}[r]_{i_s} & Z}$$

Note that $f_0$ is admissible monic with zero cokernel, hence an isomorphism by remark \ref{zero coker}.
Composing the sequence of proper admissible monics $j_1, \ldots j_{s}$  with any sequence of proper subobjects of $X$: \[ 0=X_0 \; \mono X_1 \;\mono  \cdots \; \mono \;  X_{r-1}\;\mono\; X_r=X \]
yields a chain of proper admissible monics ending in $Y$ of length $r+s$. Hence, the definition of length yields $l_{\E}(Y) \ge  l_{\E}(X)+l_{\E}(Z)$.
\end{proof}

\begin{cor} Let $Y\iepi{g} Z$ be an admissible epic between $\E-$finite objects. Then $l_{\E}(Y)\geq l_{\E}(Z)$.
\end{cor}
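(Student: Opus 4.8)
The corollary follows almost immediately from Theorem \ref{lXYZ}. Given an admissible epic $g: Y \iepi{} Z$ between $\E$-finite objects, there exists by definition an admissible monic $f$ with $X \imono{f} Y \iepi{g} Z$ a short exact sequence in $\E$; here $X = \ker g$. The plan is simply to invoke Theorem \ref{lXYZ} for this short exact sequence, obtaining
\[ l_{\E}(Y) \ge l_{\E}(X) + l_{\E}(Z) \ge l_{\E}(Z), \]
where the second inequality uses that $l_{\E}(X) \ge 0$ for every object $X$ (indeed $l_\E(X) \in \mN \cup \{\infty\}$, and the empty chain shows $l_\E(X)\ge 0$).

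The only point that requires a word of care is the hypothesis of Theorem \ref{lXYZ}, which asks that \emph{all three} terms of the short exact sequence be $\E$-finite. We are given that $Y$ and $Z$ are $\E$-finite, so it remains to check that $X = \ker g$ is $\E$-finite as well. This is where I would spend the one extra sentence: any chain of proper admissible monics ending in $X$ composes, via $f$, with the admissible monic $f: X \imono{} Y$ (which is not an isomorphism when $X \ne Y$; if it is an isomorphism then $X$ inherits finiteness from $Y$ directly), to give a chain of admissible monics ending in $Y$. Using (E1) — closure of admissible monics under composition — together with remark \ref{zero coker} to see that properness is preserved, such a chain in $X$ of length $r$ extends to a chain in $Y$ of length at least $r$ (appending $f$ when $f$ is proper), so $l_{\E}(X) \le l_{\E}(Y) < \infty$. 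Hence $X$ is $\E$-finite and Theorem \ref{lXYZ} applies.

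I do not anticipate any genuine obstacle here; the statement is a direct specialization of the superadditivity theorem, and the only mild subtlety is confirming $\E$-finiteness of the kernel so that the theorem's hypotheses are met. One could alternatively fold this observation into a short preliminary remark so that the proof of the corollary reduces to a single line citing Theorem \ref{lXYZ}.
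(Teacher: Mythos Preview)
Your approach is exactly the paper's: form the short exact sequence $\ker g \imono{} Y \iepi{g} Z$ and invoke Theorem~\ref{lXYZ} together with $l_\E(\ker g)\ge 0$. You are in fact slightly more careful than the paper, which applies Theorem~\ref{lXYZ} without pausing to verify that $\ker g$ is $\E$-finite; your extra observation that any chain of proper admissible monics in $\ker g$ extends (via $f$) to one in $Y$, hence $l_\E(\ker g)\le l_\E(Y)<\infty$, cleanly closes that small gap.
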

\begin{proof}
The kernel of $g$ yields the  short exact sequence $\ker g \mono Y \iepi{g} Z$. Hence theorem \ref{lXYZ} implies that $l_{\E}(Y) \ge l_{\E}(Z)$ since $l_{\E}(\ker g)\geq 0$.
\end{proof}

\begin{remark}
Analogously to abelian categories, one could define a {\em composition series} of an object $X$ to be a chain of admissible monics 

\[ 0=X_0 \; \imono{i_1} X_1 \;\imono{i_2}  \cdots \; \imono{i_{n-1}} \;  X_{n-1}\;\imono{i_{n}} \; X_n=X \] 
whose cokernels are $\E-$simple. These composition series are certainly chains of proper admissible monics that cannot be refined, so they are good candidates for chains defining the length of $X$.
However, the length of a composition series of an object $X$ need not be unique in general, that is, the Jordan-H\"older property does not hold necessarily. We provide a simple example:
\end{remark}

\begin{example}\label{nonJH}
Consider the split exact structure $\E=\E_{min}$. As seen in remark \ref{E-simple indecomposable}, the $\E-$simple objects are precisely the indecomposables. Hence in this case the $\E-$length function measures the maximum number of indecomposable direct summands of an object $X$. 
The Jordan-H\"older property thus coincides with the Krull-Schmidt property, and we obtain a counterexample re-visiting example \ref{submonoid2}:
The category $\A_S$  for $S = \N \backslash \{ 1 \}$ equipped with the split exact structure admits two $\E-$simple objects, $k^2$ and $k^3$, up to isomorphism.
There are two composition series for the object $X =k^6$ in $\A_S$, one of length 3 with cokernels $k^2$, the other of length 2 with cokernels $k^3$.
Following our definition, the object $X =k^6$ has length 
$l_{\E}(X) = 3$.

This example also shows that the length function need not be additive on short exact sequences: Consider the short exact sequence
$$0\rightarrow k^3 \mono k^6 \epi k^3 \rightarrow 0$$ 
in $(\A_S,\E)$, then
\[ l_{\E}(k^6) = 3 \neq 2 = l_{\E}(k^3)+l_{\E}(k^3) .\]
\end{example}

\subsection{The poset structure on $Obj\A$}

We assume in this section that $(\A, \E)$ is a finite  exact category, that is, every object is $\E-$finite.
In general the length function behaves well with respect to $\E-$subobjects, that is $l_\E(X) \le l_\E(Y)$ if $X \subset_\E Y$. The following lemma shows that strict inclusion is also
preserved when the objects are of \emph{finite} length:
\begin{lemma}\label{lengthXY}

Consider two objects $X$ and $Y$ in $A$ such that $X{\subsetneq}_{\mathcal{E}} 
Y$. Then
\[ l_{\mathcal{E}}(X)\lneqq l_{\mathcal{E}}(Y).\] 
\end{lemma}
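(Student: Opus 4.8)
The goal is to upgrade the inequality $l_\E(X) \le l_\E(Y)$ (valid for any $\E$-subobject) to a strict inequality when the inclusion $X \subsetneq_\E Y$ is proper and both objects have finite $\E$-length. The natural approach is to exhibit a chain of proper admissible monics ending in $Y$ that is strictly longer than any chain defining $l_\E(X)$. By hypothesis there is a proper admissible monic $i : X \mono Y$, so it fits into a short exact sequence $X \imono{i} Y \iepi{g} Z$ with $Z = \coker i$, and $Z \ne 0$ by Remark \ref{zero coker}. Since $Z$ is nonzero it admits at least one proper admissible monic $0 \mono Z$, so $l_\E(Z) \ge 1$.

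The key step is then to invoke Theorem \ref{lXYZ}, the superadditivity of the length function on short exact sequences of $\E$-finite objects. All of $X$, $Y$, $Z$ are $\E$-finite here: $X$ and $Y$ are $\E$-finite by hypothesis, and $Z$ is a quotient of the $\E$-finite object $Y$, hence $\E$-finite by the corollary following Theorem \ref{lXYZ} (an admissible epic from an $\E$-finite object has $\E$-finite image). Applying Theorem \ref{lXYZ} to $X \mono Y \epi Z$ gives
\[
l_\E(Y) \;\ge\; l_\E(X) + l_\E(Z) \;\ge\; l_\E(X) + 1 \;>\; l_\E(X),
\]
which is exactly the claimed strict inequality $l_\E(X) \lneqq l_\E(Y)$.

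The only point that requires care is the finiteness of $Z$, needed to apply Theorem \ref{lXYZ}, but as noted this follows from the corollary to that theorem, so there is really no serious obstacle. An alternative, more self-contained route that avoids Theorem \ref{lXYZ} altogether would be: take a maximal chain $0 = X_0 \mono \cdots \mono X_r = X$ realizing $l_\E(X) = r$, and extend it by the proper admissible monic $i : X \mono Y$ (using axiom (E1), closure of admissible monics under composition, to see the composites $X_{j} \mono X \mono Y$ are still admissible) to obtain a chain $0 = X_0 \mono \cdots \mono X_r = X \mono Y$ of length $r+1$ consisting of proper admissible monics; by definition of $l_\E(Y)$ this forces $l_\E(Y) \ge r + 1 = l_\E(X) + 1$. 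Either way, the essential content is that appending one genuinely proper step strictly increases the supremum of chain lengths, which is guaranteed precisely by $\E$-finiteness (so that the supremum is attained and finite on both sides). I would present the argument via Theorem \ref{lXYZ} for brevity, since it is already available.
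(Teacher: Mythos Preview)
Your proposal is correct, and in fact your ``alternative, more self-contained route'' is exactly the argument the paper gives: take a maximal chain realising $l_\E(X)=n$, append the proper admissible monic $i:X\mono Y$, and conclude $l_\E(Y)\ge n+1$. Your preferred route via Theorem~\ref{lXYZ} is also valid but slightly more roundabout, since the proof of Theorem~\ref{lXYZ} itself reduces (in the case $l_\E(Z)=1$) to precisely this chain-extension step. One small correction: the corollary you cite does \emph{not} say that an admissible quotient of an $\E$-finite object is $\E$-finite---it assumes both endpoints are $\E$-finite and compares their lengths. In the context of Section~6.2 this does not matter, because the standing hypothesis there is that $(\A,\E)$ is a finite exact category, so every object (in particular $Z$) is $\E$-finite automatically; you should invoke that standing hypothesis rather than the corollary.
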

\begin{proof}
Let $X$ be a proper admissible subobject of $Y$,  that is, there exists an admissible monic $X\imono{i}Y$ which is not an isomorphism. We show that $$l_{\mathcal{E}}(X)+ 1 \leq l_{\mathcal{E}}(Y).$$
Assume that $X$ has length $l_{\mathcal{E}}(X)=n$. Extending a chain of subobjects defining $l_{\mathcal{E}}(X)$, we obtain a sequence of proper admissible monics ending via $i$ in $Y$ of the following form: \[ 0=X_0 \; \mono X_1 \;\mono  \cdots \; \mono \;  X_{n-1}\;\mono X=X_n\imono{i} Y. \] 
Thus the length of $Y$ is at least $n+1$.
\end{proof}

The previous lemma allows us to show that the notion of $\E-$subobjects turns $Obj\A$ into a poset:
\begin{proposition}\label{poset}
Let $(\A, \E)$ be a finite essentially small exact category. Then
the relation $\subset_\E $ induces a partial order on $Obj\A$.
\end{proposition}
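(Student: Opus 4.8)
The plan is to verify the three defining properties of a partial order for the relation $\subset_\E$ on $Obj\A$: reflexivity, antisymmetry, and transitivity. Reflexivity is immediate, since for any object $X$ the identity $1_X$ is an admissible monic by axiom (E0), so $X \subset_\E X$. Transitivity is also straightforward: if $X \subset_\E Y$ and $Y \subset_\E Z$, then composing the two admissible monics and using axiom (E1) (closure of admissible monics under composition) yields an admissible monic $X \to Z$, hence $X \subset_\E Z$.

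The substantive point — and the reason the finiteness hypothesis is needed — is antisymmetry. Suppose $X \subset_\E Y$ and $Y \subset_\E X$; I must show $X \cong Y$, i.e. that these are the same element of $Obj\A$. The idea is to argue by contradiction using the length function. If $X$ and $Y$ were not isomorphic, then at least one of the two admissible monics, say $X \imono{i} Y$, would fail to be an isomorphism, hence would be a proper admissible monic, giving $X \subsetneq_\E Y$. By Lemma \ref{lengthXY} (which applies precisely because $(\A,\E)$ is finite, so all objects have finite $\E$-length), this forces $l_\E(X) < l_\E(Y)$. On the other hand, $Y \subset_\E X$ gives $l_\E(Y) \le l_\E(X)$ — this weak monotonicity under $\subset_\E$ holds in general and is noted just before Lemma \ref{lengthXY}; it follows by composing a chain of proper admissible monics defining $l_\E(Y)$ with the admissible monic $Y \to X$. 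Combining, $l_\E(X) < l_\E(Y) \le l_\E(X)$, a contradiction. Hence both admissible monics are isomorphisms and $X \cong Y$.

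The one place that requires a little care is ensuring that the relation is well defined on isomorphism classes, not just on objects: if $X \cong X'$ and $Y \cong Y'$ and there is an admissible monic $X \to Y$, then precomposing with the iso $X' \to X$ and postcomposing with the iso $Y \to Y'$ gives an admissible monic $X' \to Y'$ (admissible monics are closed under isomorphism, as $\E$ is closed under isomorphisms by definition). So $\subset_\E$ descends to a relation on $Obj\A$. I expect the main obstacle to be purely expository — making explicit that the hypothesis of finiteness is used exactly once, in invoking Lemma \ref{lengthXY} for antisymmetry, and that without it one only gets a preorder — rather than any genuine technical difficulty, since all the ingredients (reflexivity from (E0), transitivity from (E1), the strict inequality from Lemma \ref{lengthXY}, the weak inequality from general monotonicity) are already in hand.
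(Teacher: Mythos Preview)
Your proof is correct and follows essentially the same approach as the paper: reflexivity from (E0), transitivity from (E1), and antisymmetry via Lemma~\ref{lengthXY} combined with weak monotonicity of $l_\E$ under $\subset_\E$. The only cosmetic difference is that the paper first deduces $l_\E(X)=l_\E(Y)$ from the two weak inequalities and then applies the contrapositive of Lemma~\ref{lengthXY}, whereas you phrase the same step as a proof by contradiction.
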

\begin{proof}
We defined the relation $\subset_\E $ on objects, but since the exact structure $\E$ is closed under isomorphisms, one also obtains a well-defined relation on the set of isomorphism classes $Obj\A$. It remains to show that this relation verifies the three properties defining a partial order. We do so mostly by working with objects $X$ rather than their isomorphism classes $[X]$.
\begin{enumerate}

\item Reflexive: $X\subset_\E X$ since the identity $X{\imono{id_X}} X$ is an admissible monic by (E0).

\item Antisymmetric: Assume that
 $X\subset_\E Y$ and $Y\subset_\E X$. Then we have $l_{\mathcal{E}}(X)\leq l_{\mathcal{E}}(Y)$ and $l_{\mathcal{E}}(Y)\leq l_{\mathcal{E}}(X)$, and so $l_{\mathcal{E}}(X)= l_{\mathcal{E}}(Y)$. Hence the admissible monic 
 $X\mono Y$ establishing $X \subset_\E Y$ cannot be proper by lemma \ref{lengthXY}, which shows $X=Y$ in $Obj\A$.
 
\item Transitive: if $X\subset_\E Y$ and $Y\subset_\E Z$ then there exist admissible monics $X\imono{f} Y$ and $ Y\imono{f'} Z$.  By (E1), $X\imono{f'\circ f}Z$ is an admissible monic and so $X\subset_\E Z.$
\end{enumerate}
\end{proof}

\begin{remark}
Now since we know that the notion of $\E-$subobject induces a poset structure on $Obj\A$, we could define the length of an object $X$ as the height of the element $[X]$ in $Obj\A$. In fact, this is exactly how we defined length (as maximum length of a chain of $\E-$subojects), except that we rather start with the zero object having length zero, instead of height one.
\end{remark}

We recall the following definition from \cite{Kr11}:
\begin{definition}\label{Ameasure}A {\em measure for a poset $\cS$} is a morphism of posets $\mu :\cS\rightarrow \mathcal{P}$ where $(\mathcal{P}, \leq)$ is a totally ordered set.
\end{definition}

\begin{theorem}The length function $l_{\mathcal{E}}$ of a finite essentially small exact category $(\A, \E)$ is a measure for the poset $Obj\mathcal{A}$.
\end{theorem}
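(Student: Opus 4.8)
The plan is to verify directly that the length function $l_{\mathcal{E}} : Obj\mathcal{A} \to \mathbb{N}$ satisfies the definition of a measure for a poset given in Definition \ref{Ameasure}, namely that it is a morphism of posets into a totally ordered set. Since $(\A,\E)$ is assumed finite, every object is $\E$-finite, so $l_{\mathcal{E}}$ indeed takes values in $\mathbb{N}$, which is totally ordered by its usual order; this disposes of the codomain requirement. By Proposition \ref{poset}, the relation $\subset_\E$ equips $Obj\mathcal{A}$ with a partial order, so the domain is genuinely a poset. It therefore remains only to check that $l_{\mathcal{E}}$ is order-preserving, i.e. that $X \subset_\E Y$ implies $l_{\mathcal{E}}(X) \le l_{\mathcal{E}}(Y)$.

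First I would recall that this monotonicity is already essentially established: if $X \subset_\E Y$ then either the witnessing admissible monic is an isomorphism, in which case $l_{\mathcal{E}}(X) = l_{\mathcal{E}}(Y)$ trivially, or it is proper, i.e. $X \subsetneq_\E Y$, in which case Lemma \ref{lengthXY} gives the strict inequality $l_{\mathcal{E}}(X) \lneqq l_{\mathcal{E}}(Y)$. Combining the two cases yields $l_{\mathcal{E}}(X) \le l_{\mathcal{E}}(Y)$ whenever $X \subset_\E Y$. (Alternatively, one can argue in one stroke: prepend a chain of proper admissible monics realizing $l_{\mathcal{E}}(X)$ to the admissible monic $X \mono Y$; by axiom (E1) the composites are admissible monics, and discarding any repetitions caused by an isomorphism still leaves a chain of proper admissible monics ending in $Y$ of length at least $l_{\mathcal{E}}(X)$, so $l_{\mathcal{E}}(Y) \ge l_{\mathcal{E}}(X)$.)

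Having checked that $l_{\mathcal{E}}$ preserves the order and lands in the totally ordered set $\mathbb{N}$, I would conclude that $l_{\mathcal{E}} : (Obj\mathcal{A}, \subset_\E) \to (\mathbb{N}, \le)$ is a morphism of posets, hence a measure for $Obj\mathcal{A}$ in the sense of Definition \ref{Ameasure}. There is essentially no obstacle here: the real content was already extracted in Lemma \ref{lengthXY} and Proposition \ref{poset}, and this theorem is a bookkeeping consequence. The only point requiring a modicum of care is making sure the non-proper (isomorphism) case is handled, so that one obtains the non-strict inequality needed for a morphism of posets rather than just the strict inequality of Lemma \ref{lengthXY}; and noting that finiteness of $(\A,\E)$ is what guarantees $l_{\mathcal{E}}$ is $\mathbb{N}$-valued rather than taking the value $\infty$, which would spoil the totally-ordered-codomain formulation unless one allows $\mathbb{N} \cup \{\infty\}$.
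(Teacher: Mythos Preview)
Your proposal is correct and follows essentially the same approach as the paper: invoke Proposition \ref{poset} for the poset structure on $Obj\mathcal{A}$, note that $(\mathbb{N},\le)$ is totally ordered, and observe that $l_{\mathcal{E}}$ is order-preserving. The paper's proof is terser and simply asserts that $l_{\E}$ is a morphism of posets (relying on the remark preceding Lemma \ref{lengthXY}), whereas you spell out the order-preservation via the isomorphism/proper case split and Lemma \ref{lengthXY}; your added care about the non-strict inequality and about finiteness ensuring $\mathbb{N}$-valuedness is appropriate.
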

\begin{proof}The length function $l_{\mathcal{E}}: Obj\mathcal{A}\rightarrow \mN $ is defined on the set $Obj\A$, which is a partially ordered set by proposition \ref{poset}. Moreover, $l_{\E}$ is a morphism of partially ordered sets, and so $l_{\mathcal{E}}$ is a measure since $(\mN, \leq)$ is totally ordered.
\end{proof}

\section{Gabriel-Roiter measure}

In his proof of the first Brauer-Thrall conjecture \cite{Roi}, Roiter used an induction scheme which Gabriel formalized in his report on abelian length categories \cite{Gab}. This so-called Gabriel-Roiter measure on module categories was further studied by Ringel in \cite{Ri05} and \cite{Ri06} in the representation-infinite case. Later Krause presented an axiomatic characterization of the Gabriel-Roiter measure on abelian length categories which reveals its combinatorial nature in \cite{Kr07} and \cite{Kr11}.
Our aim in this section is to extend the work of \cite{Kr11} to the more general context of exact categories.
Most of the results presented here generalize the corresponding version of Ringel or Krause.

In this section we consider $(\A,\E, ind \,\mathcal{A}, l_{\E})$ where $\A$ is an essentially small additive category, $\E$ is a fixed exact structure such that $(\A, \E)$ is a finite exact category,  $ind \,\mathcal{A}$ is the set of isomorphism classes of indecomposable objects of $\A$, and $l_{\E}$ is the associated length. 
The set $ind\, \A$ does not depend on the exact structure $\E$, but the partial order does depend on $\E$. We therefore write  $(ind\A,{\subset}_{\E})$ when referring to the poset. 
\subsection{The definition and existence}

The following definition extends the one from \cite[Definition 1.6]{Kr11} to the realm of exact categories: a Gabriel-Roiter measure on $(ind\A,{\subset}_{\E})$ is a morphism of partially ordered sets which refines the length function $l_{\E}$ and satisfies that the measure of an object $X$ cannot exceed the measure of an object $Y$ of at most equal length if all subobjects of X have smaller measure than $Y$:
\begin{definition}A map ${\mu}_{\E}:(ind\A,{\subset}_{\E}) \rightarrow (\mathcal{P},\leq)$ is called a Gabriel-Roiter measure on the exact category $(\A, \E)$ if it verifies the following axioms
\begin{enumerate}
\item[$(GR_1)$]$\mu_{\E}$ is a measure
\item[$(GR_2)$]$\mu_{\E}(X)=\mu_{\E}(Y)$ implies $l_{\E}(X)= l_{\E}(Y)$ for all $X, Y \in ind\,\A$
\item[$(GR_3)$]If $l_{\E}(X)\ge l_{\E}(Y)$ and $\mu_{\E}(X') \lneqq \mu_{\E}(Y)$ for all $X'{\subsetneq}_{\mathcal{E}}X$, then $$\mu_{\E}(X) \leq \mu_{\E}(Y).$$

\end{enumerate}
\end{definition}

Most constructions of a Gabriel-Roiter measure use as totally ordered set $(\mathcal{P},\leq)$ the set $\mathfrak{S}(\mathbb{N})$ of all vectors of natural numbers of finite length equipped with the lexicographic order $\lll$ on vectors with the natural order on $\mN$ reversed.
More explicitly, let $x=(x_1,...,x_n)$ and $y=(y_1,...,y_m)$ be two vectors of natural numbers.
We write $x \lll \nshortmid y$ if the element $x$ in the ordered set $\mathfrak{S}(\mathbb{N})$ is smaller but not equal to $y$. 
To compare these two vectors by $\lll$, we begin with the first elements; if for example $x_1=y_1$ we pass to the second elements, if again $x_2=y_2$ we pass to the third, and we continue like this until we obtain one of the following three cases:

\begin{enumerate}
\item if $x_k=y_k$ for all $1\leq k\leq i-1$ and at  position $i$ there are two different elements $x_i\lneqq y_i$ in $(\mathbb{N},\leq)$, then we get the inverse relation for the vectors: $(x_1,...,x_n) \nshortmid \ggg(y_1,...,y_m)$
\item if $n\lneqq m$ and $x_k=y_k$ for all $1\leq k\leq n$, then $(x_1,...,x_n) \lll \nshortmid (y_1,...,y_m)$
\item if $m=n$ and $x_k=y_k$ for all $1\leq k\leq m$, then $(x_1,...,x_n) = (y_1,...,y_m)$.
\end{enumerate}
\bigskip
More loosely speaking, one has $x\lll y$ if $x$ is a subword of $y$ in the sense of point $2$ above, or $y$ is denser than $x$ at the beginning, for example 
\[(1) \lll (1,3,4) \lll (1,2,4).\]
Let us now consider the following construction (we show later that it yields a Gabriel-Roiter measure for exact categories). For a fixed indecomposable object $X\in \A$, we consider the proper $\E-$filtrations $F_{\E}(X)$ of $X$ 
\[F_{\E}(X)= X_1{\subsetneq}_{\mathcal{E}}...{\subsetneq}_{\mathcal{E}} X_n=X\]
where all objects $X_i$ are indecomposable. Denote the vector of lengths in this filtration by 
\[ l_{\E}(F_{\E}(X))= (l_{\E}(X_1),..., l_{\E}(X)).\]

\begin{definition}\label{construction} Define a map
\[\mu_{\E}:(ind\A,{\subset}_{\E})  \rightarrow (\mathfrak{S}(\mathbb{N}), \lll)\]
by
$$X\longmapsto \mu_\E(X)={\max\limits_{F_{\E}(X)} \;
(l_{\E}(F_{\E}(X)))}$$
where the maximum is over all proper $\E-$filtrations of $X$ by indecomposables.
Note that the maximum is attained: We know by lemma \ref{lengthXY} that $l_{\E}(F_{\E}(X))$ is a strictly increasing sequence. But there are only finitely many strictly increasing sequences ending in the natural number $l_{\E}(X)$.
\end{definition}

\begin{example}\label{measure example}
Consider the split exact structure $\E_{min}$. Then all the indecomposable objects are $\E_{min}-$simples, and $l_{\E_{min}}(X)= 1$, therefore
\[{\mu}_{\E_{min}}(X)=(1) \]
for all $X\in ind\A$.

This is the case for example \ref{submonoid2}; $X=K^2$ or $X=K^3$ and then
\[{\mu}_{\E_{min}}(K^2)={\mu}_{\E_{min}}(K^3)=(1).\]
\end{example}

\bigskip

The following lemma can be derived from \cite[section 1]{Kr11}, applied to the length function $l_\E$ on the poset $(ind\A,{\subset}_{\E})$. We give a short proof in our setup.
\begin{lemma}$\label{measure}\mu_{\E}: (ind\A,{\subset}_{\E})  \rightarrow (\mathfrak{S}(\mathbb{N}), \lll)$ is a measure for $(ind\A,{\subset}_{\E})$.
\end{lemma}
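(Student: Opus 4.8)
The plan is to show directly that $\mu_{\E}$ is a morphism of posets, i.e.\ that $X \subset_\E Y$ implies $\mu_\E(X) \lll \mu_\E(Y)$, and that it is order-reflecting on the diagonal in the sense needed by Definition \ref{Ameasure} (a measure is merely a morphism of posets into a totally ordered set, so strictly speaking I only need monotonicity; but to make the statement useful I will record the slightly stronger fact that $X \subsetneq_\E Y$ gives $\mu_\E(X) \lll\nshortmid \mu_\E(Y)$, and that $\mu_\E(X) = \mu_\E(Y)$ forces $l_\E(X)=l_\E(Y)$, which is immediate since the last entry of $\mu_\E(X)$ is $l_\E(X)$).

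First I would fix an admissible monic $X \imono{i} Y$ with $X$, $Y$ indecomposable, and take a proper $\E$-filtration by indecomposables $X_1 \subsetneq_\E \cdots \subsetneq_\E X_n = X$ realizing $\mu_\E(X) = l_\E(F_\E(X))$. Composing the last inclusion with $i$ (using (E1), closure of admissible monics under composition) yields a proper $\E$-filtration $X_1 \subsetneq_\E \cdots \subsetneq_\E X_n = X \subsetneq_\E Y$ of $Y$ by indecomposables, whose length vector is $(\mu_\E(X), l_\E(Y))$ — that is, $\mu_\E(X)$ with the extra entry $l_\E(Y)$ appended. By definition of $\mu_\E(Y)$ as a maximum over all such filtrations, $\mu_\E(Y) \ggg (\mu_\E(X), l_\E(Y))$ or they are equal; in either case, comparing componentwise against $\mu_\E(X)$ (which is a strict prefix of $(\mu_\E(X), l_\E(Y))$ by Lemma \ref{lengthXY}, since $l_\E(X) \lneqq l_\E(Y)$ when the inclusion is proper) gives $\mu_\E(X) \lll \mu_\E(Y)$ via case 1 or case 2 of the description of $\lll$. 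When $i$ is an isomorphism, $X \cong Y$ and $\mu_\E(X) = \mu_\E(Y)$ trivially, so $\mu_\E(X) \lll \mu_\E(Y)$ in the weak sense.

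The one point requiring a little care is the comparison of vectors: I need that $(\mu_\E(X), l_\E(Y))$, being a strictly increasing sequence ending in $l_\E(Y)$, compares under $\lll$ with $\mu_\E(Y)$ in a way compatible with the desired conclusion. Here I use that $\mu_\E(Y)$ is itself the maximum, so $\mu_\E(Y) \ggg (\mu_\E(X), l_\E(Y))$ weakly; and that $\lll$ is transitive on $\mathfrak{S}(\mathbb{N})$; combined with $\mu_\E(X) \lll (\mu_\E(X), l_\E(Y))$ (case 2, proper prefix, valid because $l_\E(X) < l_\E(Y)$ so the appended entry is genuinely new) this chains to $\mu_\E(X) \lll \mu_\E(Y)$. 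The main obstacle, if any, is purely bookkeeping: making sure the filtration obtained by appending $Y$ is \emph{proper} at the last step, which is exactly the content of Lemma \ref{lengthXY} applied to $X \subsetneq_\E Y$ — without finiteness of $\E$-length this could fail, but that is part of our standing hypotheses in this section. No deeper input is needed.
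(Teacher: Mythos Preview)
Your proof is correct and follows the same route as the paper: extend a filtration realizing $\mu_\E(X)$ by appending $Y$, then chain the inequalities $\mu_\E(X) \lll (\mu_\E(X),l_\E(Y)) \lll \mu_\E(Y)$ using the prefix relation and maximality. Your appeal to Lemma \ref{lengthXY} is harmless but not actually needed at that point, since properness of the last step is the hypothesis $X \subsetneq_\E Y$ itself, and case 2 of the order $\lll$ applies to any proper prefix regardless of the value appended.
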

\begin{proof}We have that $(ind\A, {\subset}_{\mathcal{E}})$ is a partially ordered set by the induced order on $ind\A \subset Obj\A$, and it is easy to see that $ (\mathfrak{S}(\mathbb{N}), \lll)$ is a totally ordered set since $(\mathbb{N},\leq)$ is totally ordered. It suffices to show that $\mu_{\E}$ is a morphism of posets. To this end, let $X {\subsetneq}_{\mathcal{E}} Y$, and consider a filtration \[F_{\E}(X): X_1{\subsetneq}_{\mathcal{E}}...{\subsetneq}_{\mathcal{E}}X_{n}=X\] such that \[{\mu}_{\E}(X)=l_{\E}(F_{\E}(X))= (l_{\E}(X_1),...,l_{\E}(X_n)).\]
This yields the following filtration of $Y$ \[F_{\E}(Y): X_1{\subsetneq}_{\mathcal{E}}...{\subsetneq}_{\mathcal{E}}X_{n}=X{\subsetneq}_{\mathcal{E}} Y\] with \[l_{\E}(F_{\E}(Y))= (l_{\E}(X_1),...,l_{\E}(X_n),l_{\E}(Y))\]
hence \[ {\mu}_{\E}(X)=(l_{\E}(X_1),...,l_{\E}(X_n))\lll (l_{\E}(X_1),...,l_{\E}(X_n),l_{\E}(Y))\lll {\mu}_{\E}(Y).\]
\end{proof}

The previous lemma establishes that the measure of a subobject $X'$ of $X$ is smaller than the measure of $X$. Of particular importance will be subobjects of $X$ whose measure is a subword of the measure of $X$, we call them as follows:
\begin{definition}A chain 
$$F_{\E}(X) : \; X_1 \,{\subsetneq}_{\mathcal{E}} \, X_2 \, {\subsetneq}_{\mathcal{E}}\, ... \,{\subsetneq}_{\mathcal{E}}X_{n-1} \,{\subsetneq}_{\mathcal{E}} \, X_n=X$$
in $ind\A$ is called a \emph{$\mu_\E-$filtration} of $X$ if for all $1\leq i \leq n$ the vector $\mu_\E(X_i)$ coincides with the subword of $\mu_\E(X)$ formed by the first $i$ entries.
\end{definition}
\begin{lemma}\label{submeasure}
Let $F_{\E}(X) : \; X_1 \,{\subsetneq}_{\mathcal{E}} \, X_2 \, {\subsetneq}_{\mathcal{E}}\, ... \,{\subsetneq}_{\mathcal{E}}X_{n-1} \,{\subsetneq}_{\mathcal{E}} \, X_n=X$ be a filtration of $X$ realizing the measure of $X$, that is,   ${\mu}_{\E}(X)=l_{\E}(F_{\E}(X))$. Then $F_{\E}(X)$ is a $\mu_\E-$filtration of $X$.

\end{lemma}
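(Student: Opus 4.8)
The plan is to fix a filtration $F_{\E}(X): X_1 \subsetneq_{\E} \cdots \subsetneq_{\E} X_n = X$ that realizes $\mu_{\E}(X)$, i.e.\ $\mu_{\E}(X) = l_{\E}(F_{\E}(X)) = (l_{\E}(X_1), \ldots, l_{\E}(X_n))$, and show that each truncated chain $X_1 \subsetneq_{\E} \cdots \subsetneq_{\E} X_i$ realizes $\mu_{\E}(X_i)$, i.e.\ $\mu_{\E}(X_i) = (l_{\E}(X_1), \ldots, l_{\E}(X_i))$. Once this is established, the first $i$ entries of $\mu_{\E}(X)$ are exactly $(l_{\E}(X_1), \ldots, l_{\E}(X_i)) = \mu_{\E}(X_i)$, which is precisely the condition that $F_{\E}(X)$ is a $\mu_{\E}$-filtration of $X$. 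I would prove the claim by descending induction on $i$ (or, equivalently, by contradiction at the largest index where it fails).

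\textbf{Key steps.} First, note that $X_1 \subsetneq_{\E} \cdots \subsetneq_{\E} X_i$ is a proper $\E$-filtration of $X_i$ by indecomposables, so by the definition of $\mu_{\E}$ (Definition \ref{construction}) we always have
\[
(l_{\E}(X_1), \ldots, l_{\E}(X_i)) \;\lll\; \mu_{\E}(X_i).
\]
The content is the reverse inequality. Suppose for contradiction that for some index $i$ the truncation is \emph{not} maximal, and choose the \emph{largest} such $i$; necessarily $i < n$ since $F_{\E}(X)$ itself realizes $\mu_{\E}(X_n) = \mu_{\E}(X)$. Then there is a proper $\E$-filtration $G: Y_1 \subsetneq_{\E} \cdots \subsetneq_{\E} Y_m = X_i$ by indecomposables with $l_{\E}(G) = \mu_{\E}(X_i) \ggg (l_{\E}(X_1), \ldots, l_{\E}(X_i))$. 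Now splice: since $X_i \subsetneq_{\E} X_{i+1} \subsetneq_{\E} \cdots \subsetneq_{\E} X_n = X$, the concatenated chain
\[
Y_1 \subsetneq_{\E} \cdots \subsetneq_{\E} Y_m = X_i \subsetneq_{\E} X_{i+1} \subsetneq_{\E} \cdots \subsetneq_{\E} X_n = X
\]
is again a proper $\E$-filtration of $X$ by indecomposables (each link is a proper admissible monic, and composition of admissible monics is admissible by (E1)). Its length vector is $(l_{\E}(Y_1), \ldots, l_{\E}(Y_m), l_{\E}(X_{i+1}), \ldots, l_{\E}(X_n))$.

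\textbf{Main obstacle.} The crux is comparing this spliced vector with $\mu_{\E}(X) = (l_{\E}(X_1), \ldots, l_{\E}(X_n))$ under $\lll$ and deriving a contradiction with maximality of $\mu_{\E}(X)$. The two vectors agree on no canonical common prefix a priori, so one must argue purely from the order $\lll$: since $l_{\E}(G) \ggg (l_{\E}(X_1), \ldots, l_{\E}(X_i))$ as vectors, and both vectors share the same final entry $l_{\E}(Y_m) = l_{\E}(X_i) = l_{\E}(X_i)$, the ``denser at the beginning'' alternative (case 1 in the description of $\lll$) must occur at some position $k \le \min(m,i)$, where the two vectors first differ with $l_{\E}(Y_k) \lneqq l_{\E}(X_k)$ (recall the order on $\mN$ is reversed inside $\lll$, so $\ggg$ on vectors means a \emph{smaller} natural number appears first). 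But then the spliced vector, whose first $m$ entries are $l_{\E}(Y_1), \ldots, l_{\E}(Y_m)$, differs from $\mu_{\E}(X)$ at position $k$ in exactly the same way, hence the spliced vector is $\ggg \mu_{\E}(X)$ — contradicting that $\mu_{\E}(X)$ is the maximum over all proper $\E$-filtrations of $X$ by indecomposables. This forces $\mu_{\E}(X_i) = (l_{\E}(X_1), \ldots, l_{\E}(X_i))$ for every $i$, which is the assertion. One should double-check the degenerate cases $k = m \le i$ and $k = i \le m$ (where one vector is a prefix of the other) separately, but these are handled identically using alternative 2 of the definition of $\lll$.
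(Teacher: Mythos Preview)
Your proposal is correct and follows essentially the same approach as the paper: assume some truncated chain fails to realize $\mu_{\E}(X_i)$, take a better filtration $G$ of $X_i$, splice it with $X_i \subsetneq_{\E} \cdots \subsetneq_{\E} X_n$, and contradict the maximality of $\mu_{\E}(X)$ via a case analysis of the order $\lll$. Your observation that both $l_{\E}(G)$ and $(l_{\E}(X_1),\ldots,l_{\E}(X_i))$ share the same last entry $l_{\E}(X_i)$ (hence the proper-prefix alternative is in fact excluded for strictly increasing sequences) is a small simplification over the paper, which treats the two alternatives of $\lll$ separately; the device of choosing the \emph{largest} failing index $i$ is harmless but not actually used.
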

\begin{proof}
We have to show for each $1\leq i \leq n$ that ${\mu}_{\E}(X_i) = (l_{\E}(X_1),...,l_{\E}(X_i)).$
Of course, the sequence $(l_{\E}(X_1),...,l_{\E}(X_i))$ is one candidate for the maximum ${\mu}_{\E}(X_i)$, so we only need to show that the case $${\mu}_{\E}(X_i)=(l_{\E}(X'_1),...,l_{\E}(X'_m)) \nshortmid\ggg (l_{\E}(X_1),...,l_{\E}(X_i)) \mbox{ with } X'_m=X_i$$ 
is impossible. By definition of the order relation $\lll$, there are two situations to be considered:
\begin{enumerate}
\item there exists an index $1\leq j\leq \min\{i, m\}$ such that 
$$l_{\E}(X_k)=l_{\E}(X'_k) \mbox { for all } 1\leq k < j \mbox{ and } l_{\E}(X'_j) < l_{\E}(X_j).$$ 
But then the filtration of $X$
\[ X'_1{\subsetneq}_{\mathcal{E}}...{\subsetneq}_{\mathcal{E}}X'_m=X_i {\subsetneq}_{\mathcal{E}} X_{i+1}{\subsetneq}_{\mathcal{E}} ...{\subsetneq}_{\mathcal{E}} X_{n}=X\]\label{filtX}
yields a length sequence which is denser in the beginning than $\mu_{\E}(X)$, which  contradicts the fact that $F_{\E}(X)$ realizes the measure of $X$.

\item The sequence $(l_{\E}(X_1),...,l_{\E}(X_i))$ is a subword of $(l_{\E}(X'_1),...,l_{\E}(X'_m))$, that is,  $i\lvertneqq m$ and $l_{\E}(X_k)=l_{\E}(X'_k)$ for all $1\leq k\leq i$. 
But then again the same filtration of $X$ in \ref{filtX} yields a contradiction.
\end{enumerate}
\end{proof}
\begin{theorem}\label{GR measure}
(Compare \cite[Section 3.1]{Kr11}).
The map $\mu_{\E}$ is a Gabriel-Roiter measure  for $ind(\A,\E)$.
\end{theorem}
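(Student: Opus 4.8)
The plan is to verify the three axioms $(GR_1)$, $(GR_2)$, $(GR_3)$ for the map $\mu_\E$ constructed in Definition \ref{construction}. Axiom $(GR_1)$ is already established: Lemma \ref{measure} shows that $\mu_\E$ is a measure, i.e. a morphism of posets from $(ind\A,{\subset}_\E)$ to the totally ordered set $(\mathfrak{S}(\N),\lll)$. So the work lies in $(GR_2)$ and $(GR_3)$.

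For $(GR_2)$, suppose $\mu_\E(X)=\mu_\E(Y)$. The last entry of the vector $\mu_\E(X)$ is, by construction, $l_\E(X)$ (every proper $\E$-filtration of $X$ by indecomposables ends at $X$, so the final length recorded is $l_\E(X)$; more carefully, a maximal such filtration always includes $X$ itself as its top term, and its penultimate term is a proper $\E$-subobject, so the recorded vector indeed terminates in $l_\E(X)$). Likewise the last entry of $\mu_\E(Y)$ is $l_\E(Y)$. Equality of the vectors forces equality of their last entries, hence $l_\E(X)=l_\E(Y)$. This is the easy axiom.

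For $(GR_3)$, the hypotheses are $l_\E(X)\ge l_\E(Y)$ and $\mu_\E(X')\lllneq\mu_\E(Y)$ (strictly smaller, i.e. $\lll$ but not equal) for every proper $\E$-subobject $X'{\subsetneq}_\E X$; the goal is $\mu_\E(X)\le\mu_\E(Y)$. I would argue using a $\mu_\E$-filtration of $X$: by Lemma \ref{submeasure}, a filtration $X_1{\subsetneq}_\E\cdots{\subsetneq}_\E X_n=X$ realizing $\mu_\E(X)$ is a $\mu_\E$-filtration, so $\mu_\E(X_{n-1})$ is exactly the subword of $\mu_\E(X)$ consisting of its first $n-1$ entries, and $\mu_\E(X)=(\mu_\E(X_{n-1}),\,l_\E(X))$ (append the last coordinate). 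Now $X_{n-1}{\subsetneq}_\E X$ is a proper $\E$-subobject, so the hypothesis gives $\mu_\E(X_{n-1})\lllneq\mu_\E(Y)$. Writing $\mu_\E(Y)=(y_1,\dots,y_m)$ with $y_m=l_\E(Y)$, I split into the two cases coming from the definition of $\lll$: either $\mu_\E(X_{n-1})$ and $\mu_\E(Y)$ first differ at some position $j$ with the $X_{n-1}$-entry strictly larger in $\N$ (hence smaller in the reversed order) — in which case $\mu_\E(X)$, which agrees with $\mu_\E(X_{n-1})$ up to position $n-1$, already differs from $\mu_\E(Y)$ at that same position $j<n$, giving $\mu_\E(X)\lll\mu_\E(Y)$ outright; or $\mu_\E(X_{n-1})$ is a proper initial subword of $\mu_\E(Y)$, i.e. $n-1<m$ and the first $n-1$ entries agree. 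In the second case one must compare the $n$-th entries: the $n$-th entry of $\mu_\E(X)$ is $l_\E(X)$, and I claim $l_\E(X)\ge y_n$. Here is where $l_\E(X)\ge l_\E(Y)$ enters: since $(y_1,\dots,y_m)$ is a strictly increasing sequence (Lemma \ref{lengthXY}) of length $m>n-1$, we have $y_n\le y_{n}\le\cdots\le y_m=l_\E(Y)\le l_\E(X)$ — wait, this needs $y_n\le l_\E(Y)$, which holds by monotonicity of the $y_i$; combined with $l_\E(Y)\le l_\E(X)$ this gives $l_\E(X)\ge y_n$, so at position $n$ the $X$-entry is $\ge$ the $Y$-entry in $\N$, hence $\le$ in the reversed order, and therefore $\mu_\E(X)\lll\mu_\E(Y)$ (strictly if $l_\E(X)>y_n$, and if $l_\E(X)=y_n$ then $\mu_\E(X)$ is an initial subword of $\mu_\E(Y)$ since $n<m$ forces at least one more entry, again giving $\mu_\E(X)\lll\mu_\E(Y)$). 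In all cases $\mu_\E(X)\le\mu_\E(Y)$, as required.

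The main obstacle I anticipate is the bookkeeping in $(GR_3)$: one has to be careful that appending $l_\E(X)$ to $\mu_\E(X_{n-1})$ genuinely recovers $\mu_\E(X)$ (this is exactly the content of Lemma \ref{submeasure}, so it must be invoked cleanly rather than re-proved), and one must handle the boundary case $l_\E(X)=y_n$ correctly using the strict length increase along $\mu_\E(Y)$ to conclude $n<m$ forces a further entry. Everything else is a routine unwinding of the order $\lll$ and the already-established monotonicity properties of $\mu_\E$ and $l_\E$.
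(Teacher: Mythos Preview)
Your argument is correct and follows essentially the same route as the paper: $(GR_1)$ is Lemma~\ref{measure}, $(GR_2)$ reads off the last entry, and $(GR_3)$ hinges on Lemma~\ref{submeasure}. The only organizational difference is that the paper argues $(GR_3)$ by contradiction (assuming $\mu_\E(X)\ggg\nshortmid\mu_\E(Y)$ and locating a subobject $X_i$ at the first differing position, or $X_m$ in the subword case, whose measure violates the hypothesis), whereas you argue directly using only the single subobject $X_{n-1}$. Both are immediate once Lemma~\ref{submeasure} is in hand.

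Two small boundary cases to tidy up. First, in your second subcase you assert $n<m$, but the subword hypothesis only gives $n-1<m$, i.e.\ $n\le m$; when $n=m$ and $l_\E(X)=y_n$ you get $\mu_\E(X)=\mu_\E(Y)$, which still yields $\mu_\E(X)\lll\mu_\E(Y)$ as required. Second, your argument invokes $X_{n-1}$, which is unavailable when $n=1$; but then $X$ is $\E$-simple, so $l_\E(X)=1\ge l_\E(Y)$ forces $Y$ $\E$-simple as well and $\mu_\E(X)=(1)=\mu_\E(Y)$.
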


\begin{proof}
We verify that $\mu_{\E}$ as given in definition \ref{construction} satisfies the three axioms of a Gabriel-Roiter measure.

$(GR_1):$ This is lemma \ref{measure}. 

$(GR_2):$ If  
$ {\mu}_{\E}(X)=(l_{\E}(X_1),...,l_{\E}(X_n))=(l_{\E}(Y_1),...,l_{\E}(Y_m))={\mu}_{\E}(Y)$ then clearly 
$l_{\E}(X)=l_{\E}(X_n)=l_{\E}(Y_m)=l_{\E}(Y)$.

$(GR_3):$  Let $X$ and $Y$ be such that $l_{\E}(X)\geq l_{\E}(Y)$ and ${\mu}_{\E}(X')\lll \nshortmid {\mu}_{\E}(Y)$ for all $X'{\subsetneq}_{\mathcal{E}} X$.
Let ${\mu}_{\E}(Y)=(l_{\E}(Y_1),...,l_{\E}(Y_m))$ and ${\mu}_{\E}(X)=(l_{\E}(X_1),...,l_{\E}(X_n))$. Assuming that ${\mu}_{\E}(X) \nshortmid \ggg {\mu}_{\E}(Y)$,  we have one of the following cases:
\begin{enumerate}

\item there exists $1\leq i\leq m$ such that $l_{\E}(Y_k)= l_{\E}(X_k)$ for all $1\leq k \leq i-1$ and $l_{\E}(Y_i)\gneqq l_{\E}(X_i)$. But we know from lemma \ref{submeasure} that 
$${\mu}_{\E}(X_i)=(l_{\E}(X_1),...,l_{\E}(X_i)),$$ thus $ {\mu}_{\E}(Y)\lll \nshortmid  {\mu}_{\E}(X_i) $ and we get a contradiction by taking $X'=X_i.$

\item $m\lneqq n$ and ${\mu}_{\E}(Y)$ is a subword of ${\mu}_{\E}(X)$. Again by lemma  \ref{submeasure} we get 
\[{\mu}_{\E}(Y)= {\mu}_{\E}(X_m)=(l_{\E}(X_1),...,l_{\E}(X_m))\]
which yields a contradiction by choosing $X'=X_m$.
\end{enumerate}
Hence ${\mu}_{\E}(X) \lll {\mu}_{\E}(Y)$ and $(GR_3)$ is satisfied.
\end{proof}

\subsection{Some basic properties}
Krause shows in \cite{Kr11} that the Gabriel-Roiter measure satisfies some properties on abelian length categories, and we are studying here if these properties still hold for finite exact categories. Let $\mu_{\E}$ be the Gabriel-Roiter measure as in definition \ref{construction} for the finite exact category $(\A, \E)$.

\begin{proposition}$\mu_{\E}$ satisfies the following properties:
\begin{enumerate}
\item[$(GR_4)$] $\mu_{\E}(X)\lll \mu_{\E}(Y)$ or $\mu_{\E}(Y)\lll \mu_{\E}(X)$ for all $X, Y\in ind\A$.
\item[$(GR_5)$]$\{ \mu_{\E}(X)| X \in ind\A, l_{\E}(X)\leq n \}$ is a finite set for all $n\in \mathbb{N}$.
\item[$(GR_6)$]$X\in ind\A$ is $\E-$simple if and only if $\mu_{\E}(X)\lll \mu_{\E}(Y)$ for all $ Y\in ind\A$.
\end{enumerate}
\end{proposition}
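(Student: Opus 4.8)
The plan is to dispatch the three axioms in increasing order of difficulty, the only real work being $(GR_6)$. Property $(GR_4)$ is immediate: as already noted in the proof of Lemma \ref{measure}, $(\mathfrak{S}(\mathbb{N}),\lll)$ is a \emph{totally} ordered set, so any two values $\mu_\E(X),\mu_\E(Y)$ are comparable. For $(GR_5)$, recall from Definition \ref{construction} that $\mu_\E(X)=(l_\E(X_1),\dots,l_\E(X))$ is a strictly increasing sequence whose entries are positive integers (each $X_i$ is a nonzero indecomposable, so $l_\E(X_i)\ge 1$) and whose last entry is $l_\E(X)$. Hence if $l_\E(X)\le n$ the vector $\mu_\E(X)$ is the increasing enumeration of a nonempty subset of $\{1,\dots,n\}$, and there are only finitely many such vectors; therefore $\{\mu_\E(X)\mid X\in ind\A,\ l_\E(X)\le n\}$ is finite.

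For $(GR_6)$ I would first isolate the following claim: \emph{every nonzero indecomposable $Y$ of $(\A,\E)$ admits an $\E$-simple $\E$-subobject; consequently the first entry of $\mu_\E(Y)$ equals $1$, so $(1)$ is the minimum of the image $\{\mu_\E(Y)\mid Y\in ind\A\}$}. To prove it, pick a chain of proper admissible monics
\[0=X_0 \mono X_1 \mono \cdots \mono X_n=Y\]
realizing $l_\E(Y)=n$. Then $X_1\ne 0$ by Remark \ref{zero coker}, and iterating Lemma \ref{lengthXY} along $X_1 \mono \cdots \mono X_n=Y$ gives $l_\E(X_1)+(n-1)\le n$, hence $l_\E(X_1)=1$; so $X_1$ is $\E$-simple, it is indecomposable by Remark \ref{E-simple indecomposable}, and $X_1\subset_\E Y$ via the composite admissible monic, using (E1). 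If $Y$ is itself $\E$-simple then $\mu_\E(Y)=(1)$; otherwise $X_1\subsetneq_\E Y$ is a proper $\E$-filtration of $Y$ by indecomposables, so $(1,l_\E(Y))\lll\mu_\E(Y)$, which forces the first entry of $\mu_\E(Y)$ to be $1$ (any vector with first entry $\ge 2$ is strictly smaller than $(1,l_\E(Y))$ in the reversed lexicographic order, contradicting maximality of $\mu_\E(Y)$). Since every $\mu_\E(Y)$ thus begins with $1$, the one-term vector $(1)$ is a prefix of each, so $(1)\lll\mu_\E(Y)$ for all $Y\in ind\A$.

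Granting the claim, $(GR_6)$ is short. If $X$ is $\E$-simple its only $\E$-subobjects are $0$ and $X$, and $0$ is excluded from a filtration by indecomposables, so the only proper $\E$-filtration of $X$ by indecomposables is $(X)$ itself and $\mu_\E(X)=(l_\E(X))=(1)$; by the claim $(1)\lll\mu_\E(Y)$ for all $Y\in ind\A$. Conversely, assume $\mu_\E(X)\lll\mu_\E(Y)$ for all $Y\in ind\A$ (we may assume $\A\ne 0$, otherwise there is nothing to prove). By the claim there exists an $\E$-simple object $S$, and $\mu_\E(S)=(1)$; taking $Y=S$ gives $\mu_\E(X)\lll(1)$. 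Since $(1)$ is the minimum of the image of $\mu_\E$ and $\mu_\E(X)$ lies in that image, $\mu_\E(X)=(1)$, whence $l_\E(X)=1$ and $X$ is $\E$-simple.

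The step I expect to require the most care is the order-theoretic bookkeeping inside the claim: because $\lll$ is the \emph{reversed} lexicographic order, $(1)$ is \emph{not} the minimum of $\mathfrak{S}(\mathbb{N})$ as a whole, and one must argue that it is the minimum of the sub-poset actually realized as values of $\mu_\E$. This works precisely because every such value begins with the entry $1$, which in turn rests on the existence of an $\E$-simple $\E$-subobject in every indecomposable object — the one genuinely category-theoretic input, obtained from a length-realizing chain together with Lemma \ref{lengthXY} and Remark \ref{zero coker}.
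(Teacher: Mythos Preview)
Your proof is correct and follows essentially the same approach as the paper: $(GR_4)$ via total order of $(\mathfrak{S}(\mathbb{N}),\lll)$, $(GR_5)$ via the finiteness of strictly increasing sequences bounded by $n$, and $(GR_6)$ via the existence of an $\E$-simple $\E$-subobject in every indecomposable together with $\mu_\E(S)=(1)$ for $\E$-simples. The only minor difference is that the paper obtains the $\E$-simple subobject by invoking the $\E$-Artinian property directly, whereas you extract it as the bottom term $X_1$ of a length-realizing chain via Lemma \ref{lengthXY}; your version also spells out more carefully why $(1)$ is the minimum of the \emph{image} of $\mu_\E$ rather than of $\mathfrak{S}(\mathbb{N})$, a point the paper leaves implicit.
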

\begin{proof}$(GR_4)$ is clear since $(\mathfrak{S}(\mathbb{N}), \lll)$ is totally ordered.
$(GR_5)$ follows from the fact that the set of strictly increasing vectors 
$$\{v \in \mathfrak{S}(\mathbb{N})\,|\, v=\mu_{\E}(X)=(v_1,...,l_{\E}(X))\}$$ 
is finite since $l_{\E}(X)\leq n$.
To prove $(GR_6)$ we need to remenber that $(\A, \E)$ is a finite exact category, so all objects are of finite length.  Hence each indecomposable object is $\E-$Artinian and thus has an $\E-$simple $\E-$subobject. Let us also note that each  indecomposable $\E-$simple object $X$ satisfies $\mu_{\E}(X)=(1)$.
\end{proof}

In the aim to show more properties of Gabriel-Roiter measure on finite exact categories, we extend the following definitions from \cite{Kr11} (3.3) for exact categories:
 \begin{definition}Let $X, Y\in ind\A$. We say that $X$ is a \emph{$\E-$Gabriel-Roiter predecessor} of $Y$ if $X {\subsetneq}_{\mathcal{E}} Y$ and $\mu_{\E}(X)= max_{Y'{\subsetneq}_{\mathcal{E}}Y} \mu_{\E}(Y')$.
 An inclusion $X{\subsetneq}_{\mathcal{E}}Y$ is called \emph{$\E-$Gabriel-Roiter inclusion} if X is a \emph{$\E-$Gabriel-Roiter predecessor} of $Y$. We denote it $X{\subset}_{\E}^{GR}Y$.
 \end{definition}
 Note that each object $Y\in ind_{\A}$ which is not $\E-$simple admits an $\E-$Gabriel-Roiter predecessor, by $(GR_4)$ and $(GR_5)$. An $\E-$Gabriel-Roiter predecessor $X$ of $Y$ is usually not unique, but the value $\mu_{\E}(X)$ is unique and 
 determined by $\mu_{\E}(Y)$.
\begin{definition}A chain 
\[X_1{\subsetneq}_{\mathcal{E}}X_2{\subsetneq}_{\mathcal{E}}...{\subsetneq}_{\mathcal{E}}X_{n-1}{\subsetneq}_{\mathcal{E}}X_n=X\]
in $ind\A$ is called a \emph{$\E-$Gabriel-Roiter filtration} of $X$ if $X_1$ is $\E-$simple and $X_{i-1}$ is an \emph{$\E-$Gabriel-Roiter predecessor} of $X_i$ for all $ 2\leq i \leq n$.
\end{definition}

\begin{proposition}
A chain 
\[X_1{\subsetneq}_{\mathcal{E}}X_2{\subsetneq}_{\mathcal{E}}...{\subsetneq}_{\mathcal{E}}X_{n-1}{\subsetneq}_{\mathcal{E}}X_n=X\]
in $ind\A$ is an $\E-$Gabriel-Roiter filtration of $X$ if and only if it is a $\mu_\E-$filtration of $X$.
\end{proposition}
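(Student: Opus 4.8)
The plan is to prove both implications by computing, position by position, the vectors $\mu_\E(X_i)$ along the chain, and to reduce all the combinatorics to a single fact about the order $\lll$ on $\mathfrak{S}(\mathbb{N})$: if $v,w$ are strictly increasing vectors of natural numbers and $c$ is a natural number strictly larger than the last entry of $v$ and the last entry of $w$, then $v\lll w$ if and only if $(v,c)\lll (w,c)$, and this equivalence respects strict inequalities. One verifies this by examining the two cases in the definition of $\lll$: in the ``prefix'' case the appended $c$ exceeds the first unmatched entry of $w$ by the hypothesis on $c$, while in the ``denser at the start'' case appending $c$ does not move the first position where the two vectors differ. This cancellation/embedding statement is the only genuinely order-theoretic ingredient and is where I expect to have to be most careful, since the lexicographic order with $\mathbb{N}$ reversed interacts with prefixes in a non-obvious way. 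Throughout I will use freely that a proper $\E$-subobject strictly decreases $\E$-length (Lemma~\ref{lengthXY}), that $\E$-simple objects are indecomposable (Remark~\ref{E-simple indecomposable}), that $\mu_\E$ is a measure (Lemma~\ref{submeasure} and the surrounding material), and that since $(\A,\E)$ is finite every indecomposable that is not $\E$-simple admits a proper $\E$-simple $\E$-subobject.

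Assume first that $X_1\subsetneq_\E\cdots\subsetneq_\E X_n=X$ is an $\E$-Gabriel-Roiter filtration. I would show by induction on $i$ that $\mu_\E(X_i)=(l_\E(X_1),\dots,l_\E(X_i))$; taking $i=n$ identifies $\mu_\E(X)$, and then each earlier equality says precisely that $\mu_\E(X_i)$ is the length-$i$ prefix of $\mu_\E(X)$, i.e.\ that the chain is a $\mu_\E$-filtration. For $i=1$ this is immediate: $X_1$ is $\E$-simple, so $l_\E(X_1)=1$ and the only proper $\E$-filtration of $X_1$ by indecomposables is $X_1$ itself, hence $\mu_\E(X_1)=(1)$. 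For the inductive step, extending a filtration of $X_{i-1}$ realizing $\mu_\E(X_{i-1})$ by the inclusion $X_{i-1}\subsetneq_\E X_i$ is a valid filtration of $X_i$ (it is strictly increasing since $l_\E(X_{i-1})<l_\E(X_i)$), so $(\mu_\E(X_{i-1}),l_\E(X_i))\lll\mu_\E(X_i)$. Conversely, pick a filtration $Z_1\subsetneq_\E\cdots\subsetneq_\E Z_m=X_i$ realizing $\mu_\E(X_i)$; since $l_\E(X_i)\ge l_\E(X_{i-1})+1\ge 2$ we have $m\ge 2$, and Lemma~\ref{submeasure} identifies $\mu_\E(Z_{m-1})$ with $\mu_\E(X_i)$ minus its last entry, so $\mu_\E(X_i)=(\mu_\E(Z_{m-1}),l_\E(X_i))$. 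As $Z_{m-1}\subsetneq_\E X_i$ and $X_{i-1}$ is an $\E$-Gabriel-Roiter predecessor of $X_i$ we get $\mu_\E(Z_{m-1})\lll\mu_\E(X_{i-1})$, and appending the common entry $l_\E(X_i)$ (which exceeds both $l_\E(Z_{m-1})$ and $l_\E(X_{i-1})$) via the embedding lemma gives $\mu_\E(X_i)\lll(\mu_\E(X_{i-1}),l_\E(X_i))$. Combining the two bounds with the inductive hypothesis yields $\mu_\E(X_i)=(\mu_\E(X_{i-1}),l_\E(X_i))=(l_\E(X_1),\dots,l_\E(X_i))$, completing the induction.

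Now assume $X_1\subsetneq_\E\cdots\subsetneq_\E X_n=X$ is a $\mu_\E$-filtration. Since the last entry of $\mu_\E(X_i)$ is always $l_\E(X_i)$ and, by hypothesis, $\mu_\E(X_i)$ is the length-$i$ prefix of $\mu_\E(X)$, we obtain $\mu_\E(X_i)=(l_\E(X_1),\dots,l_\E(X_i))$ for every $i$; in particular $\mu_\E(X_1)$ has length one. If $X_1$ were not $\E$-simple it would have a proper $\E$-simple $\E$-subobject $S$, and then $l_\E(X_1)\ge 2$ and $(1,l_\E(X_1))$ would be the length vector of a proper $\E$-filtration of $X_1$ by indecomposables; but the length-one vector $\mu_\E(X_1)=(l_\E(X_1))$ is strictly $\lll$-smaller than $(1,l_\E(X_1))$, contradicting the maximality defining $\mu_\E(X_1)$. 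Hence $X_1$ is $\E$-simple.

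It remains to check the predecessor condition. Fix $i\ge 2$ and an arbitrary indecomposable $Y'\subsetneq_\E X_i$. Extending a filtration of $Y'$ realizing $\mu_\E(Y')$ through $X_i$ produces a proper $\E$-filtration of $X_i$ by indecomposables, so $(\mu_\E(Y'),l_\E(X_i))\lll\mu_\E(X_i)=(\mu_\E(X_{i-1}),l_\E(X_i))$; cancelling the common last entry $l_\E(X_i)$ (which is larger than both $l_\E(Y')$ and $l_\E(X_{i-1})$) via the embedding lemma gives $\mu_\E(Y')\lll\mu_\E(X_{i-1})$. Since $X_{i-1}$ is itself a proper $\E$-subobject of $X_i$, the maximum $\max_{Y'\subsetneq_\E X_i}\mu_\E(Y')$ is attained at $X_{i-1}$, so $X_{i-1}$ is an $\E$-Gabriel-Roiter predecessor of $X_i$. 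Together with $X_1$ being $\E$-simple, this says that the chain is an $\E$-Gabriel-Roiter filtration, which completes the proof.
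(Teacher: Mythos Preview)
Your proof is correct and follows essentially the same strategy as the paper: both directions hinge on appending or cancelling a common last entry $l_\E(X_i)$ and using how the order $\lll$ behaves under that operation. Your presentation is somewhat cleaner than the paper's in two respects: you isolate the append/cancel step as an explicit ``embedding lemma'' on $(\mathfrak{S}(\mathbb{N}),\lll)$ and prove it once, whereas the paper reargues the needed case analysis inline; and in the $\mu_\E\Rightarrow$ GR direction you explicitly verify that $X_1$ is $\E$-simple, a point the paper's proof passes over. In the GR $\Rightarrow\mu_\E$ direction your induction computes $\mu_\E(X_i)$ directly, while the paper compares the given GR filtration to an auxiliary $\mu_\E$-filtration; these are minor organizational differences rather than genuinely distinct arguments.
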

\begin{proof}
Let F be a \emph{$\mu_\E-$filtration} of $X$ in $ind\A$.
\[F:= X_1{\subsetneq}_{\mathcal{E}}X_2{\subsetneq}_{\mathcal{E}} \cdots {\subsetneq}_{\mathcal{E}}X_{n-1}{\subsetneq}_{\mathcal{E}}X_n=X\]

Suppose F is not a Gabriel-Roiter filtration. Then for some i$\in$ \{1,...,n-1\}, $X_{i}$ is not a $\E-$Gabriel-Roiter predecessor of $X_{i+1}$, that is, there exists a subobject $X'$ of $X_{i+1}$ such that  $\mu_{\E}(X_{i})\lll \nshortmid \mu_{\E}(X')$. Let $F'$ and $F_{i}$ be filtrations \[F_{i}:= X_1{\subsetneq}_{\mathcal{E}}X_2{\subsetneq}_{\mathcal{E}}...{\subsetneq}_{\mathcal{E}}X_{i-1}{\subsetneq}_{\mathcal{E}}X_i\]

\[F':= X_1'{\subsetneq}_{\mathcal{E}}X_2'{\subsetneq}_{\mathcal{E}}...{\subsetneq}_{\mathcal{E}}X_{m-1}'{\subsetneq}_{\mathcal{E}}X_{m}'=X'\]
giving $\mu_{\E}(X_{i})$ and $\mu_{\E}(X')$ respectively.

Since both are subobject of $X_{i+1}$, we can complete both vectors of measure with $l(X_{i+1})$. In this situation, if $\mu_{\E}(X_{i})$ is a strict subword of $\mu_{\E}(X')$, then $X_{i+1}'$ being subobject of $X_{i+1}$ gives 
$(\mu_{\E}(X_{i}),l(X_{i+1}))\lll \nshortmid (\mu_{\E}(X'),l(X_{i+1}))$. 

On the other hand, if $X_{j}$=$X_{j}'$ for all j$\in$\{1,...,$l$-1\} and $l(X_{l}')\leq l(X_{l})$, then the completion of both vector is trivially order preserving. Both cases lead to a contradiction of $F$ being a \emph{$\mu_\E-$filtration}, thus a \emph{$\mu_\E-$filtration} is a Gabriel-Roiter filtration.
\medskip

Conversely, let us show that all Gabriel-Roiter filtrations are $\mu_\E-$filtrations. We proceed by induction on $m$. Of course the Gabriel-Roiter filtrations of length 1
coincide with the $\mu_\E-$filtrations of same length. Suppose now that the statement is true for all $l \in $ \{1,...,$m$-1\}.
Let $G$ and $F$:\[F:= X_1{\subsetneq}_{\mathcal{E}}X_2{\subsetneq}_{\mathcal{E}}...{\subsetneq}_{\mathcal{E}}X_{n-1}{\subsetneq}_{\mathcal{E}}X_n=X\]
\[G:= Y_1{\subsetneq}_{\mathcal{E}}Y_2{\subsetneq}_{\mathcal{E}}...{\subsetneq}_{\mathcal{E}}Y_{m-1}{\subsetneq}_{\mathcal{E}}Y_m=X\]
be two filtrations of X such that $F$ is a $\mu_\E-$filtration and $G$ is a Gabriel-Roiter filtration. We know that $Y_{m-1}$ is a Gabriel-Roiter predecessor of $X$, so $\mu_{\E}(X_{n-1}) \lll \mu_{\E}(Y_{m-1})$. By induction hypothesis, $\mu_{\E}(Y_{m-1})=(l(Y_{1}),l(Y_{2}),..., l(Y_{m-1}))$ since it is  given by a Gabriel-Roiter filtration of $Y_{m-1}$ of length $m-1$. By completing the vector with $l(X)$, using the same reasoning as above, we obtain that \[(l(X_{1}),l(X_{2}),..., l(X_{n-1}),l(X)) \lll (l(Y_{1}),l(Y_{2}),..., l(Y_{m-1}),l(X)).\] 
Since $F$ is a $\mu_\E-$filtration, we automatically get \[(l(X_{1}),l(X_{2}),..., l(X_{n-1}),l(X)) = (l(Y_{1}),l(Y_{2}),..., l(Y_{m-1}),l(X))\] and thus every Gabriel-Roiter filtration is a $\mu_\E-$filtration.

\end{proof}
\begin{proposition}$(GR_7)$ Suppose that $\mu_{\E}(X)\lll \nshortmid 
\mu_{\E}(Y)$. Then there are 

$Y'{\subsetneq}_{\mathcal{E}}Y"{\subset}_{\E}Y$ in $ind\A$ such that $Y'$ is a \emph{$\E-$Gabriel-Roiter predecessor} of $Y"$ with 
$\mu_{\E}(Y')\lll \mu_{\E}(X)\lll \nshortmid \mu_{\E}(Y")$ and $l_{\E}(Y')\leq l_{\E}(X)$.
\end{proposition}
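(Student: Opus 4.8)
The plan is to descend along a $\mu_\E$-filtration of $Y$ and locate the first step at which the measure of $X$ no longer fits as a subword. Concretely, let $Y_1 \subsetneq_\E Y_2 \subsetneq_\E \cdots \subsetneq_\E Y_m = Y$ be a $\mu_\E$-filtration of $Y$, so that by definition $\mu_\E(Y_i)$ is the subword of $\mu_\E(Y)$ consisting of the first $i$ entries, and $Y_1$ is $\E$-simple while each $Y_{i-1}$ is an $\E$-Gabriel-Roiter predecessor of $Y_i$ (using the equivalence of $\mu_\E$-filtrations and Gabriel-Roiter filtrations proved just above). Since $\mu_\E(Y_1) = (1) \lll \mu_\E(X)$ (as $X$ is indecomposable, hence $l_\E(X) \ge 1$) and $\mu_\E(Y_m) = \mu_\E(Y) \ggg\!\!\nshortmid\; \mu_\E(X)$, there is a well-defined smallest index $i$ with $2 \le i \le m$ such that $\mu_\E(X) \lll\!\!\nshortmid\; \mu_\E(Y_i)$; for this $i$ one has $\mu_\E(Y_{i-1}) \lll \mu_\E(X)$. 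I would then set $Y' := Y_{i-1}$ and $Y'' := Y_i$; by construction $Y' \subsetneq_\E Y'' \subset_\E Y$ in $ind\,\A$, and $Y'$ is an $\E$-Gabriel-Roiter predecessor of $Y''$, which gives the first and the strict-middle inequalities $\mu_\E(Y') \lll \mu_\E(X) \lll\!\!\nshortmid\; \mu_\E(Y'')$.

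**The length estimate.** It remains to check $l_\E(Y') \le l_\E(X)$, i.e.\ $l_\E(Y_{i-1}) \le l_\E(X)$. Here I would use that $\mu_\E(Y_{i-1}) \lll \mu_\E(X)$ and unwind the definition of $\lll$ on $\mathfrak{S}(\mathbb{N})$. Write $\mu_\E(Y_{i-1}) = (a_1,\dots,a_{i-1})$ with $a_{i-1} = l_\E(Y_{i-1})$ and $\mu_\E(X) = (b_1,\dots,b_r)$ with $b_r = l_\E(X)$. If $\mu_\E(Y_{i-1})$ is a (proper or improper) subword of $\mu_\E(X)$, i.e.\ $i-1 \le r$ and $a_k = b_k$ for all $k \le i-1$, then $l_\E(Y_{i-1}) = a_{i-1} = b_{i-1} \le b_r = l_\E(X)$ because the vector $\mu_\E(X)$ is strictly increasing (Lemma~\ref{lengthXY}). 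Otherwise $\mu_\E(X)$ is "denser at the beginning": there is a first position $j$ where $b_j \lneqq a_j$ and $a_k = b_k$ for $k < j$; then $j \le i-1$ (since the entries agree up to $j-1$ and differ at $j$, and $j \le r$), and using that $\mu_\E(X)$ is strictly increasing together with $\mu_\E(Y_{i-1})$ being strictly increasing and having only $i-1$ entries, one gets $l_\E(X) = b_r \ge b_{r} \ge \dots$; more carefully, since $b_j \lneqq a_j$ forces $b_j \le a_j - 1$ but also $b_{j}, b_{j+1}, \dots$ is strictly increasing with $r \ge j$ terms, and $a_j < a_{j+1} < \dots < a_{i-1}$ has $i-1-j$ further terms, a counting of strictly increasing integers bounded below by $b_{j-1}+1 = a_{j-1}+1$ shows $l_\E(X) = b_r \ge a_{i-1} = l_\E(Y_{i-1})$.

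**The main obstacle.** I expect the delicate point to be precisely this last numeric comparison: one must be careful that $\mu_\E(Y_{i-1}) \lll \mu_\E(X)$ really does force $l_\E(Y_{i-1}) \le l_\E(X)$, because $\lll$ reverses the natural order at the first point of disagreement, so "smaller in $\mathfrak{S}(\mathbb{N})$" does not obviously mean "shorter" or "smaller last entry". The key facts that make it work are that every vector occurring as a value of $\mu_\E$ is \emph{strictly increasing} (Lemma~\ref{lengthXY}), and that the last entry of $\mu_\E(Z)$ equals $l_\E(Z)$. Using these, one shows by a short induction on position (or directly by the integer-counting argument above) that among strictly increasing integer vectors, $u \lll v$ implies $\max(u) \le \max(v)$ whenever $u$ and $v$ share a common first entry — which they do here, both being $1$. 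I would phrase this as a separate elementary observation about $(\mathfrak{S}(\mathbb{N}), \lll)$ restricted to strictly increasing vectors with first entry $1$, state it once, and then apply it. Everything else — the existence of the $\mu_\E$-filtration of $Y$, the fact that consecutive terms are Gabriel-Roiter predecessors, and that $\mu_\E(Y_1)=(1)$ — is immediate from the results already established.
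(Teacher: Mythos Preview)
Your overall strategy---take a $\mu_\E$-filtration $Y_1\subsetneq_\E\cdots\subsetneq_\E Y_m=Y$, locate the least index $i$ with $\mu_\E(X)\lll\nshortmid\mu_\E(Y_i)$, and set $Y'=Y_{i-1}$, $Y''=Y_i$---is exactly the approach the paper has in mind (it defers to Krause's argument in \cite{Kr11}, which proceeds precisely this way). The first part of your argument is correct.

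However, your treatment of the length estimate contains an error. The general claim you formulate at the end---that among strictly increasing vectors with first entry $1$, $u\lll v$ implies $\max(u)\le\max(v)$---is \emph{false}: take $u=(1,5)$ and $v=(1,3)$; then $u\lll\nshortmid v$ (since $5>3$ at position $2$) but $\max(u)=5>3=\max(v)$. Your ``integer-counting argument'' in the disagreement case does not work for the same reason.

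The correct observation is simpler and more structural: the case you struggle with cannot occur. Because $\mu_\E(Y_{i-1})=(a_1,\dots,a_{i-1})$ and $\mu_\E(Y_i)=(a_1,\dots,a_{i-1},a_i)$ share their first $i-1$ entries, any disagreement between $\mu_\E(X)$ and $\mu_\E(Y_{i-1})$ at some position $j\le i-1$ would force the \emph{same} comparison between $\mu_\E(X)$ and $\mu_\E(Y_i)$, contradicting $\mu_\E(Y_{i-1})\lll\mu_\E(X)\lll\nshortmid\mu_\E(Y_i)$. Hence $\mu_\E(Y_{i-1})$ must be an initial subword of $\mu_\E(X)$, and then $l_\E(Y_{i-1})=a_{i-1}=b_{i-1}\le b_r=l_\E(X)$ is immediate from strict monotonicity. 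Replace your second case and the false lemma by this one-line observation and the proof is complete.
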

\begin{proof}
The proof of $(GR_7)$ in \cite{Kr11} on abelian length categories can be generalized for finite exact categories, we adapt it by replacing each monomorphism by ${\subset}_{\E}$, and the length function by our length \ref{lE}. 
\end{proof}


Now we are studying, always in the more general context of essentially small exact categories, the main property of the Gabriel-Roiter measure due to \emph{Gabriel}, that is shown in \cite[3.4,$(GR_8)$]{Kr11} for abelian length categories. In fact we will see that  it does not always hold for all exact categories.

\begin{definition} Let $(\A, \E)$ be an essentially small exact category. We say that $(\A, \E)$ satisfies $(GR8)$ if for each indecomposable object $X$ the following holds:
if $X{\subset}_{\E}Y={{\oplus}^{r}_{i=1}}Y_i$ with indecomposables $Y_i$, then $\mu_{\E}(X)\lll \max_{1 \le i \le r}\mu_{\E}(Y_i)$, and $X$ is a direct summand of $Y$ in case equality holds.
\end{definition}

\begin{lemma}$(GR8)$ holds for the minimal exact structure $(\A, \E_{min})$.
\end{lemma}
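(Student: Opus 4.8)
The plan is to unwind the meaning of $\E_{min}$ and reduce $(GR8)$ to a statement about multisets of indecomposable summands, which is essentially the Krull–Schmidt theorem plus a computation with the lexicographic order $\lll$. First I would recall from Remark \ref{E-simple indecomposable} that in $(\A, \E_{min})$ the $\E_{min}$-simple objects are exactly the indecomposables, so $l_{\E_{min}}(X)$ is the maximal number of indecomposable direct summands appearing in a decomposition of $X$, and an $\E_{min}$-subobject $X \subset_{\E_{min}} Y$ is precisely a direct summand (since admissible monics for $\E_{min}$ are split monics). In particular, for $X$ indecomposable, $X \subset_{\E_{min}} Y$ means $X$ is isomorphic to one of the $Y_i$ in the decomposition $Y = \bigoplus_{i=1}^r Y_i$.

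Next I would compute $\mu_{\E_{min}}$ explicitly. For an indecomposable $X$, any proper $\E_{min}$-filtration by indecomposables $X_1 \subsetneq_{\E_{min}} \cdots \subsetneq_{\E_{min}} X_n = X$ has all $X_j$ indecomposable direct summands of $X$; since $X$ itself is indecomposable, each $X_j$ with a proper inclusion into $X$ must be a proper summand, forcing $X_j = 0$ for $j < n$ — so the only filtration is $0 \subsetneq_{\E_{min}} X$, giving $\mu_{\E_{min}}(X) = (l_{\E_{min}}(X)) = (1)$ for every indecomposable $X$. Then for $X$ indecomposable and $X \subset_{\E_{min}} Y = \bigoplus_{i=1}^r Y_i$, we have $\mu_{\E_{min}}(X) = (1) = \max_{1 \le i \le r} \mu_{\E_{min}}(Y_i)$, so the inequality $\mu_{\E_{min}}(X) \lll \max_i \mu_{\E_{min}}(Y_i)$ holds with equality; and as observed above, $X \subset_{\E_{min}} Y$ indecomposable means $X \cong Y_i$ for some $i$, i.e.\ $X$ is a direct summand of $Y$, which is exactly the required conclusion in the equality case.

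So the proof structure is: (1) identify $\E_{min}$-subobjects with direct summands via Proposition \ref{split} / Remark \ref{E-simple indecomposable}; (2) show $\mu_{\E_{min}}(X) = (1)$ for all indecomposable $X$; (3) conclude that the defining inequality of $(GR8)$ always holds with equality and that the equality case is satisfied automatically because an indecomposable $\E_{min}$-subobject of $Y$ is literally a summand of $Y$. There is essentially no obstacle here — the content is entirely in translating the abstract definitions for the split exact structure, and the only point requiring a word of care is step (2), namely checking that an indecomposable object admits no nontrivial $\E_{min}$-filtration by indecomposables, which follows because a proper split mono out of a smaller indecomposable into the indecomposable $X$ cannot exist unless the source is zero.
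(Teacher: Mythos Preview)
Your proof is correct and follows essentially the same approach as the paper: the paper's proof also observes that an $\E_{min}$-subobject $X$ of $Y$ is automatically a direct summand, hence the equality case of $(GR8)$ is immediate, and the inequality is trivial since $\mu_{\E_{min}}$ takes the constant value $(1)$ on indecomposables (which the paper has already recorded in Example~\ref{measure example}). One small remark: your intermediate claim that $X$ must be isomorphic to one of the given $Y_i$ presupposes the Krull--Schmidt property, which is not assumed in general (cf.\ Example~\ref{submonoid2}); but this is harmless, since $(GR8)$ only asks that $X$ be a direct summand of $Y$, and that is exactly what $X \subset_{\E_{min}} Y$ says.
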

\begin{proof}
If $X{\subset}_{\E}Y={{\oplus}^{r}_{i=1}}Y_i$ with respect to the minimal exact structure $\E=\E_{min}$, then $X$ is isomorphic to a direct summand $Y_j$. 
Thus $\mu_{\E}(X)\lll \max\mu_{\E}(Y_i)$, and $(GR8)$ holds.
\end{proof}

\begin{remark}\label{ds}
The main property $(GR8)$ holds for the maximal exact structure $\E_{ab}$ when $\A$ is abelian, and for the minimal exact structure $\E_{min}$. However, in general, if we have $X{\subset}_{\E}Y={{\oplus}^{r}_{i=1}}Y_i$, then  $\mu_{\E}(X) = \max\mu_{\E}(Y_i)$ does not always imply that $X$ is a direct summand of $Y$.
We provide an example:

Consider the example discussed in \ref{cube}, and choose the exact structure $\E=\E_3$. If we take $X = S_2$, then $X$  is an $\E-$subobject of $Y= P_1 \oplus P_3$ since we have the Auslander-Reiten sequence $(AR3)$ in $\E$. But all indecomposables are simple in $\E$, so the measure is $\mu_\E(X) = \mu_\E(P_1) = \mu_\E(P_3)=(1)$. That is, even if the condition $\mu_{\E}(X) = \max\mu_{\E}(Y_i)$ is satisfied, $X$ is \emph{not} a direct summand of $Y$.

This example also illustrates that the property $(GR8)$ is not preserved under reduction: It holds for $(\A,\E_{ab})$ and $(\A,\E_{min})$, but not for the intermediate exact category $(\A,\E_{3})$.
In general, we do not know which class of exact categories satisfies $(GR8)$.
\end{remark}

Let us close this section by the following proposition which modifies the definition of the extension map in \cite[3.4]{Kr11}:
\begin{proposition}The Gabriel-Roiter measure \ref{construction}
can be extended to a \emph{measure} defined for all objects in $\A$ (not only the indecomposable ones) as follows: 

\[\mu_{\E}:(Obj \A,{\subset}_{\E})  \rightarrow (\mathfrak{S}(\mathbb{N}), \lll)\]
\[X\longmapsto {\mu}_{\E}(X)= max_{X'{\subset}_{\mathcal{E}}X}\, \mu_{\E}(X')\]
where $X'\in ind\A$ runs through all the indecomposable subobjects of $X$.
\end{proposition}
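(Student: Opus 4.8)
The plan is to verify that the proposed map $\mu_{\E} : (Obj\,\A, {\subset}_{\E}) \to (\mathfrak{S}(\mathbb{N}), \lll)$ defined by $X \mapsto \max_{X' {\subset}_{\E} X,\, X' \in ind\,\A} \mu_{\E}(X')$ is well-defined and is a morphism of posets, which by Definition~\ref{Ameasure} is exactly what is required for it to be a measure. First I would check that the maximum is attained: since $(\A,\E)$ is a finite exact category, $l_{\E}(X) < \infty$, and by Lemma~\ref{lengthXY} any indecomposable $\E$-subobject $X'$ of $X$ satisfies $l_{\E}(X') \leq l_{\E}(X)$; the set of values $\{\mu_{\E}(X') : X' \in ind\,\A,\, X' {\subset}_{\E} X\}$ is therefore contained in the finite set appearing in $(GR_5)$, so a maximum exists in the totally ordered set $(\mathfrak{S}(\mathbb{N}), \lll)$. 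Also the set over which we maximize is nonempty: $X$ has at least one indecomposable $\E$-subobject, since a nonzero $\E$-Artinian object has an $\E$-simple $\E$-subobject (and if $X=0$ one sets $\mu_{\E}(0)$ to be the empty word, or simply restricts to nonzero objects as elsewhere in the paper).

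Next I would check that this extends the original $\mu_{\E}$ on $ind\,\A$: if $X$ is itself indecomposable, then $X {\subset}_{\E} X$ via the identity, so $\mu_{\E}(X)$ is among the values being maximized, and by Lemma~\ref{measure} every proper indecomposable $\E$-subobject $X'$ satisfies $\mu_{\E}(X') \lll \mu_{\E}(X)$; hence the maximum is $\mu_{\E}(X)$ and the two definitions agree.

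The remaining point is monotonicity. Suppose $X {\subset}_{\E} Y$. Every indecomposable $\E$-subobject $X'$ of $X$ is, by transitivity of ${\subset}_{\E}$ (which holds by axiom (E1), as established in Proposition~\ref{poset}), also an indecomposable $\E$-subobject of $Y$. Therefore the set over which we maximize for $X$ is a subset of that for $Y$, and since $(\mathfrak{S}(\mathbb{N}),\lll)$ is totally ordered, $\mu_{\E}(X) = \max_{X' {\subset}_{\E} X} \mu_{\E}(X') \lll \max_{Y' {\subset}_{\E} Y} \mu_{\E}(Y') = \mu_{\E}(Y)$. This shows $\mu_{\E}$ is a morphism of posets, hence a measure for $(Obj\,\A, {\subset}_{\E})$.

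I expect no serious obstacle here: the statement is essentially a formal consequence of the finiteness of $(\A,\E)$ (to guarantee the maximum exists), transitivity of ${\subset}_{\E}$ (for monotonicity), and Lemma~\ref{measure} (to see it genuinely extends the indecomposable version). The one place requiring a small amount of care is the boundary case of the zero object and the convention for $\mu_{\E}(0)$; following the paper's convention of working with nonzero objects, or assigning the empty word the role of smallest element, resolves it. One could additionally remark that this extended $\mu_{\E}$ need not itself satisfy the analogue of $(GR_2)$ on all of $Obj\,\A$, so "measure" here is meant only in the sense of Definition~\ref{Ameasure}, not Gabriel-Roiter measure.
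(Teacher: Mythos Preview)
Your proposal is correct and follows the same approach as the paper: verify that the map is a morphism of posets into a totally ordered set, hence a measure in the sense of Definition~\ref{Ameasure}. The paper's own proof is a one-line assertion that $\mu_{\E}$ is inclusion-preserving and that $(\mathfrak{S}(\mathbb{N}),\lll)$ is totally ordered; you supply the details it omits (existence of the maximum via $(GR_5)$, nonemptiness via $\E$-Artinianness, compatibility with the indecomposable case, and monotonicity via transitivity of ${\subset}_{\E}$), as well as the useful caveat about the zero object and the distinction between ``measure'' and ``Gabriel-Roiter measure''.
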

\begin{proof}
Clearly, ${\mu}_{\E}$ is an inclusion-preserving map between the poset $Obj\A$ with the partial order of \ref{poset}, and $(\mathfrak{S}(\mathbb{N}), \lll)$ which is totally ordered as we have seen above. So ${\mu}_{\E}$ verifies the condition in \ref{Ameasure}.
\end{proof}
\begin{example}We revisit examples \ref{submonoid2} and \ref{measure example}.
Consider $X=K^6$, then ${\mu}_{\E}(K^6)=max\{{\mu}_{\E_{min}}(K^2),{\mu}_{\E_{min}}(K^3)\}=(1)$.
\end{example}


\section{Basic properties under reduction of exact structures}\label{section under reduction}

The aim of this section is to investigate how the basic notions like the $\E-$length and Gabriel-Roiter measure change under reduction of  exact structures.
\subsection{Reduction of the $\E-$length }
Here we prove that the $\E-$length of objects gets reduced when we reduce the corresponding exact structure:
\begin{lemma} If $\E$ and $\E'$ are exact structures on $\A$ such that $\E' \subseteq \E$, then $l_{\E'}(X) \leq l_{\E}(X)$ for all objects $X$ of $\A$.
\end{lemma}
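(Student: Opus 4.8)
The plan is to show that any chain of proper admissible monics for $\E'$ is also a chain of proper admissible monics for $\E$, so that the supremum defining $l_{\E'}(X)$ is taken over a subset of the chains counted by $l_{\E}(X)$. First I would take an arbitrary chain
\[ 0=X_0 \; \mono X_1 \;\mono  \cdots \; \mono \;  X_{n-1}\;\mono\; X_n=X \]
of admissible monics for $\E'$ which are not isomorphisms. Since $\E' \subseteq \E$ in the sense of Definition \ref{def reduction}, every exact pair $(i,d) \in \E'$ also lies in $\E$; hence each $i$ appearing in the chain, being an admissible monic for $\E'$ (i.e. part of a pair in $\E'$), is also an admissible monic for $\E$.

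The one point that needs a small argument is that these monics remain \emph{proper} when viewed in $\E$, i.e. that they are still not isomorphisms --- but this is immediate, since being an isomorphism is a property of the morphism itself and does not depend on the ambient exact structure. Alternatively one can invoke Remark \ref{zero coker}: an admissible monic is proper precisely when its cokernel is non-zero, and the cokernel of $i$ in the pair $(i,d)$ is $d$ regardless of whether we regard $(i,d)$ as lying in $\E'$ or in $\E$. So the chain above is also a chain of proper admissible monics for $\E$, of the same length $n$.

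Therefore every $n \in \mN$ that is realized as the length of such a chain for $\E'$ is also realized for $\E$, which gives $l_{\E'}(X) \leq l_{\E}(X)$ by taking suprema; if $l_{\E'}(X) = \infty$ then there are arbitrarily long such chains, which are also valid for $\E$, forcing $l_{\E}(X) = \infty$ as well, so the inequality holds in $\mN \cup \{\infty\}$. There is no real obstacle here: the statement is essentially a monotonicity observation, and the only thing to be careful about is making explicit that "proper" (non-isomorphism) is intrinsic to the morphism and hence transfers along the inclusion $\E' \subseteq \E$.
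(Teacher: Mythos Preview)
Your proof is correct and follows essentially the same approach as the paper: both argue that any chain of proper $\E'$-admissible monics is automatically a chain of proper $\E$-admissible monics, whence the inequality of suprema. You are slightly more careful than the paper in explicitly addressing why properness transfers and in handling the case $l_{\E'}(X)=\infty$, but the underlying idea is identical.
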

\begin{proof}Let us consider a maximal chain of $\E'-$admissibles monics ending by $X$
\[ 0=X_0 \; \imono{i_1} X_1 \;\imono{i_2} \cdots \; \imono{i_{n-1}} \;  X_{n-1}\;\imono{i_n}X=X_n \]
where $l_{\E'}(X)=n$. Since $\E'\subseteq \E$, all these pairs $(i_j, d_j)$  will also be in $\E$. So the chain above is also a chain of $\E-$admissibles monics and therefore by definition $l_{\E}(X)\ge n$.
\end{proof}

Let us illustrate reduction of length by an example:

\begin{example}
By taking $Ex(\A)$ as in \ref{cube}, we re-consider the example \ref{length ex} and notice that the chain of reductions \[\mathcal{E}_{min}\subseteq \mathcal{E}_{1,3,5}\subseteq \mathcal{E}_{ab}\] gives us that \[l_{\mathcal{E}_{min}}(I_2)= 1 \; < \; l_{\mathcal{E}_{1,3,5}}(I_2)=2 \; < \; l_{\mathcal{E}_{ab}}(I_2)=3.\]
\end{example}

\subsection{The behavior of $\E-$Gabriel-Roiter measure }
We notice that the $\E-$Gabriel-Roiter measure changes in different manners once we reduce the corresponding exact structure. Contrarily to the $\E-$length function, the Gabriel-Roiter measure does not always get reduced; reducing the exact structure could reduce the corresponding Gabriel-Roiter measure and could also enlarge it.
We illustrate this situation by some examples:

\begin{example}
We consider $\A = \rep Q$ where 
\[ Q : \qquad \xymatrix{ 1   & 2 \ar[l]_{\alpha} \ar[r]^{\beta} & 3 }\]

and we consider the following non-split short exact sequences:
\begin{enumerate}

\item[(AR1)] \hspace{2.5cm}$\xymatrix{ 0 \ar[r] & 100 \ar[r]& 111 \ar[r] & 011  \ar[r] & 0}$
\item[(AR2)] \hspace{2.5cm}$ \xymatrix{ 0 \ar[r] & 001 \ar[r]& 111 \ar[r] & 110 \ar[r]  & 0 }$
\item[(AR3)] \hspace{2cm}$ \xymatrix{ 0 \ar[r] & 111 \ar[r] & 011\oplus 110 \ar[r] & 010 \ar[r]  & 0 }$
\item[(4)] \hspace{2.5cm}$ \xymatrix{ 0 \ar[r] & 001 \ar[r] & 011 \ar[r] & 010  \ar[r] & 0}$
\item[(5)] \hspace{2.5cm}$ \xymatrix{ 0 \ar[r] & 100 \ar[r]& 110 \ar[r] & 010  \ar[r] & 0}$
\end{enumerate}
\bigskip
We construct the exact structures in the same way as in \ref{cube}, and as mentioned in \ref{A3} the lattice of exact structure $Ex(\A)$ is a cube similar to \ref{cube}.
The following chains of reduction
\[\mathcal{E}_{min}\subseteq 
 \mathcal{E}_{1}\subseteq \mathcal{E}_{1,2}\subseteq \mathcal{E}_{ab}\] 
\[\mathcal{E}_{min}\subseteq 
 \mathcal{E}_{3}\subseteq \mathcal{E}_{1,3,5}\subseteq \mathcal{E}_{ab}\] 
give us for the indecomposable object with dimension vector $111$:
\[{\mu}_{\mathcal{E}_{min}}(111)= (1)\lll {\mu}_{\mathcal{E}_{1}}(111)= {\mu}_{\mathcal{E}_{1,2}}(111)= (1,2) \ggg {\mu}_{\mathcal{E}_{ab}}(111)= (1,3)\]
\[{\mu}_{\mathcal{E}_{min}}(111)= {\mu}_{\mathcal{E}_{3}}(111)= (1) \lll {\mu}_{\mathcal{E}_{1,3,5}}(111)= (1,2) \ggg {\mu}_{\mathcal{E}_{ab}}(111)= (1,3)\]
and for $110$
\[{\mu}_{\mathcal{E}_{min}}(110)= {\mu}_{\mathcal{E}_{3}}(110)= (1) \lll {\mu}_{\mathcal{E}_{1,3,5}}(110)={\mu}_{\mathcal{E}_{ab}}(110)= (1,2).\]
So we notice that for this fixed additive category $\A=\rep Q$, by reducing the exact structure $\E$, the corresponding $\E-$Gabriel-Roiter measure gets sometimes reduced, and other times enlarged, even for the same indecomposable object.
\end{example}



\begin{thebibliography}{99}

\bibitem[ASS]{ASS} Assem, Ibrahim and Simson, Daniel and Skowro\'{n}ski, Andrzej, \emph{Elements of the representation theory of associative algebras.{V}ol. 1}, Cambridge University Press, Cambridge, 2006.
\bibitem[AS]{AS}  M.Auslander, \O.Solberg,  \emph{ Relative homology and representation theory. I. Relative homology and homologically finite subcategories}, Comm. Algebra 21 (1993), no. 9, 2995--3031.

\bibitem[Bri]{Bri}T.Bridgeland
\emph{Stability conditions on triangulated categories.}
Ann. of Math. (2) 166 (2007), no. 2, 317–345.

\bibitem[Br\"u]{Bru} Th. Br\"ustle, {\em Matrix-finite Bimodules: An
Algorithm}, C.R. Acad. Sci Paris, 319 (1994), 1141-1145.

\bibitem[BrHi]{BrHi} Th. Br\"ustle and  L. Hille, {\em Matrices over
Upper Triangular Bimodules and  $\Delta$-filtered
  Modules over Quasi-hereditary Algebras,}
Colloquium Mathematicum  {\bf 83} (2000), no. 2, 295--303.

\bibitem[B\"u]{Bu} T.Bühler, \emph{Exact categories.} Expo. Math. 28 (2010), no. 1, 1--69.

\bibitem[BuHo]{BH}  M.C.R.Butler, G.Horrocks, {\em Classes of extensions and resolutions,} Phil. Trans. Royal Soc., London, Ser. A, 254 (1961), 155--222.

\bibitem[BHT]{BHT} Th.Br\"ustle, S.Hassoun, A.Tattar, {\em Intersections, sums, and the Jordan-Hölder property for exact categories}, J. Pure Appl. Algebra 2021 (arXiv:2006.03505).

\bibitem [Cr12]{Cr}S.Crivei, \emph{Maximal exact structures on additive categories revisited}, Math. Nachr. 285 (2012), no. 4, 440–446.

\bibitem[DRSS]{DRSS} P.Dr\"axler, I.Reiten, S.O.Smal\o, \O.Solberg, 
\emph{Exact categories and vector space categories.}
With an appendix by B. Keller.
Trans. Amer. Math. Soc. 351 (1999), no. 2, 647–682.

\bibitem[En18]{Enomoto}H.Enomoto,\emph{Classifications of exact structures and Cohen-Macaulay-finite algebras}, Advances in Mathematics, 335 (2018), 838-877. 

\bibitem[Gab]{Gab}P.Gabriel, \emph{Indecomposable representations II,} Symposia Mathematica 11 (1973), 81-104.


\bibitem[GR]{GR} P.Gabriel and A.V.Ro\u{\i}ter, \emph{Representations of Finite-dimensional Algebras,} in: Algebra, VIII, Encyclopaedia Mathematical Sciences, vol. 73, Springer, Berlin, 1992 (with a chapter by B. Keller), pp. 1--177. 

\bibitem[He]{He} A.Heller, {\em Homological Algebra in Abelian Categories}, Annals of Mathematics (2), Vol. 68, No. 3 (1958), 484--525.


\bibitem[INP18]{INP}   O.Iyama, H.Nakaoka, Y.Palu, \emph{Auslander--Reiten theory in extriangulated categories}, arXiv: 1805.03776.

\bibitem[Kr07]{Kr07} H.Krause,\emph{ An axiomatic characterization of the Gabriel-Roiter measure}. Bull. Lond. Math. Soc. 39 (2007), no. 4, 550--558. 

\bibitem[Kr11]{Kr11} H.Krause, \emph{Notes on the Gabriel-Roiter measure}, arXiv: 1107.2631.

\bibitem[K\"u]{Ku} J.K\"ulshammer \emph{In the bocs seat: Quasi-hereditary algebras and representation type,}
arXiv:1601.03899.

\bibitem[Po]{Po} N.Popescu, \emph{ Abelian categories with applications to rings and modules.} London Mathematical Society Monographs, No. 3. Academic Press, London-New York, 1973. xii+467 pp. 

\bibitem[Qu]{Qu}
D.Quillen, \emph{Higher algebraic {$K$}-theory. {I}}, Algebraic
  $K$-theory, I: Higher $K$-theories (Proc. Conf., Battelle Memorial Inst.,
  Seattle, Wash., 1972), Springer, Berlin, 1973, pp.~85--147. Lecture Notes in
  Math., Vol. 341.
  
\bibitem[RW77]{RW77} F.Richman  and  E.A. Walker,\emph{
Ext  in  pre-Abelian  categories,}  Pacific  J.  Math.  71(1977),  521--535.
 
\bibitem[Ri84]{Ri84} C.M.Ringel,\emph{ Tame algebras and integral quadratic forms.} Lecture Notes in Mathematics, 1099. Springer-Verlag, Berlin, 1984.
  
\bibitem[Ri05]{Ri05}C.M.Ringel,\emph{The Gabriel-Roiter measure,} Bull. Sci. Math. 129 (2005), 726-748.

\bibitem[Ri06]{Ri06}C.M.Ringel,\emph{Fundation of the representation theory of artin algebras, using the Gabriel-Roiter Measure,} Proceeding of the Workshop on Representation of Algebras and Related Topics, Quertaro 2004, Contemp. Math. 406 (2006), 105-135.
  
 \bibitem[Ro68]{Roi} A.V.Roiter, \emph{Unboundedness of the dimension of the indecomposable representations of an algebra which has infinitely many indecomposable representations,}
 Izv. Akad. Nauk SSSR Ser.Mat. 32 (1968), 1275-1282.
 
\bibitem[Ro79]{Roi2} A. V. Roiter, {\em Matrix problems and representations of BOCSs.} In Representations
and quadratic forms (Russian), Akad. Nauk Ukrain. SSR, Inst. Mat., Kiev 1979, 3--38, 154.

   
  
\bibitem[Ru01]{Ru01}W.Rump, \emph{Almost abelian categories,} Cahiers Topologie G\'eom. Diff\'erentielle Cat\'eg. 42 (2001), no. 3, 163--225. 

\bibitem[Ru08]{Ru08}W.Rump, \emph{A counterexample to Raikov's conjecture}. Bull. Lond. Math. Soc. 40 (2008), no. 6, 985–994.
 
 \bibitem[Ru11]{Ru11}W.Rump, \emph{On the maximal exact structure of an additive category}, Fund. math. 214 (2011), no. 1, 77-87.

\bibitem[Ru15]{Ru15}W.Rump, \emph{Stable short exact sequences and the maximal exact structure of an additive category}, Fund. Math. 228 (2015), 87-96.




 \bibitem[Sim]{Si}
 D. Simson, \emph{Linear representations of partially ordered sets and vector space categories.} Algebra, Logic and Applications, 4. Gordon and Breach Science Publishers, Montreux, 1992. 
 
 \bibitem[Sch99]{Sch}J-P.Schneiders,\emph{Quasi-abelian categories and sheaves}, M\'em. Soc. Math.Fr. (N.S.) (1999), no.76, vi+134. MR MR1779315 (2001i:18023).
 
 \bibitem[SW11]{SW}D. Sieg and S.-A. Wegner, \emph{Maximal exact structures on additive categories}, Math. Nachr. 284 (2011), 2093–2100.

\end{thebibliography}
\end{document}